\documentclass[10pt, a4paper,reqno]{amsart}
\usepackage{latexsym}
\usepackage{amsmath}
\usepackage[dvips]{graphicx}%
\usepackage{amsfonts}%
\usepackage{amssymb}
\usepackage{color}

%\usepackage{showkeys}
%TCIDATA{OutputFilter=latex2.dll}
%TCIDATA{CSTFile=LaTeX article (bright).cst}
%TCIDATA{Created=Tuesday, August 02, 2005 03:37:04}
%TCIDATA{LastRevised=Friday, August 11, 2006 05:21:03}
%TCIDATA{<META NAME="GraphicsSave" CONTENT="32">}
%TCIDATA{<META NAME="DocumentShell" CONTENT="General\SW\Blank - Standard LaTeX Article">}
%TCIDATA{Language=American English}
\newcommand{\tH}{\tilde H}
\newcommand{\beq}{\begin{equation}}
\newcommand{\eeq}{\end{equation}}

\newcommand{\dloc}{d_{\rm loc}}

\def\R{\mathbb R}
\def\N{\mathbb N}
\def\C{\mathbb C}
\def\Z{\mathbb Z}
\def\cal{\mathcal}

\def\H{{\cal H}}

\def\de{\delta}
\def\e{\varepsilon}

\def\vphi{\varphi}

\newcommand{\medint}{-\kern -,375cm\int}
\newcommand{\medintinrigo}{-\kern -,315cm\int}

\def\Om{\Omega}
\newcommand{\wto}{\rightharpoonup}
\def\pa{\partial}

\def\loc{{\rm loc}}
\def\Div{{\rm div}}

\numberwithin{equation}{section}
\textwidth15cm \textheight22cm \flushbottom
\oddsidemargin=0.5cm \evensidemargin=0.5cm
\footskip=35pt \linespread{1.10}
\parindent=20pt
\setcounter{MaxMatrixCols}{30}
\newtheorem{theorem}{Theorem}[section]

\newtheorem{corollary}[theorem]{Corollary}

\newtheorem{lemma}[theorem]{Lemma}

\newtheorem{proposition}[theorem]{Proposition}

\theoremstyle{definition}
\begingroup
\newtheorem{definition}[theorem]{Definition}
\newtheorem{remark}[theorem]{Remark}

\endgroup

\begin{document}

\title[Motion of films]{Motion of three-dimensional elastic films by anisotropic surface diffusion with curvature regularization}
\author{I. Fonseca, N. Fusco, G. Leoni, M. Morini}
\address[I.\ Fonseca]{Department of Mathematical Sciences, Carnegie Mellon University, Pittsburgh, PA, U.S.A.}
\email[I.\ Fonseca]{fonseca@andrew.cmu.edu}
\address[N.\ Fusco]{Dipartimento di Matematica e Applicazioni "R. Caccioppoli",
Universit\`{a} degli Studi di Napoli "Federico II" , Napoli, Italy}
\email[N.\ Fusco]{n.fusco@unina.it}
\address[G. \ Leoni]{Department of Mathematical Sciences, Carnegie Mellon University, Pittsburgh, PA, U.S.A.}
\email[G. \ Leoni]{giovanni@andrew.cmu.edu}
\address[M.\ Morini]{Dipartimento di Matematica,
Universit\`{a} degli Studi di Parma , Parma, Italy}
\email[M.\ Morini]{massimiliano.morini@unipr.it}

\begin{abstract}
Short time existence for a surface diffusion evolution equation with curvature regularization is proved  in the context of epitaxially strained three-dimensional films.  This is achieved by implementing a  minimizing movement scheme, which is hinged on the   $H^{-1}$-gradient  flow structure 
underpinning the evolution law. Long-time behavior and Liapunov stability  in the case of initial data close to a flat configuration are also addressed.
\end{abstract}

\maketitle

{\small

\keywords{\noindent {\bf Keywords:} 
minimizing movements,   surface diffusion, gradient flows, higher order geometric flows, elastically stressed epitaxial films, volume preserving evolution, long-time behavior, Liapunov stability}}

\tableofcontents

\section{Introduction}

In this paper we study  the morphologic evolution  of anisotropic epitaxially strained films, driven by stress and surface mass transport in three dimensions.  This can be viewed as the evolutionary counterpart of the static theory developed in \cite{BC, FFLM, FM09, FFLV, Bo0, CJP} in the two-dimensional case and in \cite{Bo} in three dimensions. The two dimensional formulation of the same evolution problem has been addressed in \cite{FFLM2} (see also \cite{Pi} for the case of motion by evaporation-condensation).

The physical setting behind the evolution equation is the following. The free interface is allowed to evolve via {\em surface  diffusion} under the influence of a chemical potential $\mu$. Assuming that mass transport in the bulk occurs at a much faster time scale, and thus can be neglected (see \cite{Mu63}), we have, according to the Einstein-Nernst relation,  that the evolution is governed by the {\em volume preserving} equation
\beq\label{i1}
V=C\Delta_{_\Gamma}\mu\,, 
\eeq
where $C>0$,  $V$ denotes the normal velocity of the evolving interface $\Gamma$, $\Delta_{_\Gamma}$ stands for the tangential laplacian, and the chemical potential $\mu$ is given by the first variation of the underlying free-energy functional. 

In our case, the free energy functional associated with the physical system is given by
\beq\label{i0}
\int_{\Om_h} W(E(u))\, dz+\int_{\Gamma_h}\psi(\nu)\, d\H^2\,,
\eeq
where $h$ is the function whose graph $\Gamma_h$ describes the evolving profile of the film,  $\Om_h$ is the region occupied by the film, $u$ is displacement of the material, which is assumed to be in (quasistatic) elastic equilibrium at each time, $E(u)$ is the symmetric part of $Du$, $W$ is a positive definite quadratic form, and $\H^2$ denotes the two-dimensional Hausdorff measure. Finally, 
$\psi$ is an anisotropic surface energy density, evaluated at  the unit normal $\nu$ to $\Gamma_h$. 
  The first variation of \eqref{i0} can be written as the sum of three contributions: A constant Lagrange multiplier related to mass conservation, the (anisotropic) curvature of the surface, and the elastic energy density evaluated at the displacement of the solid on the profile of the film. Hence, \eqref{i1} takes the form (assuming $C=1$)
  \beq\label{i2}
  V=\Delta_\Gamma\bigl[\Div_{\Gamma}(D\psi(\nu))+W(E(u))\bigr]\,, 
  \eeq
where $\Div_\Gamma$ stands for the tangential divergence along $\Gamma_{h(\cdot, t)}$, and 
$u(\cdot, t)$ is the elastic equilibrium in $\Om_{h(\cdot, t)}$, i.e., the minimizer of the elastic energy under the prescribed periodicity and boundary conditions (see \eqref{leiintro} below).

In the physically relevant case of a highly anisotropic non-convex interfacial energy there may exist certain directions $\nu$ at which the ellipticity condition
$$
D^2\psi(\nu)[\tau, \tau]>0\qquad\text{for all  $\tau\perp\nu$, $\tau\neq 0$}
$$
fails, see for instance \cite{dicarlo-gurtin-guidugli92, siegel-miksis-voorhees04}. Correspondingly, the above evolution equation  becomes {\em backward parabolic} and thus ill-posed.
To overcome this ill-posedness, and following the work of Herring (\cite{herring51}), an additive curvature regularization to surface energy has been proposed, see \cite{dicarlo-gurtin-guidugli92, GurJab}.
Here we consider the following regularized surface energy:
$$
\int_{\Gamma_h}\Bigl(\psi(\nu)+\frac\e{p}|H|^p\Bigr)\, d\H^{2}\,,
$$
where $p>2$, $H$ stands for the sum $\kappa_1+\kappa_2$  of the principal curvatures of $\Gamma_h$, and $\e$ is a (small) positive constant. 
The restriction on the range of exponents $p>2$ is of technical nature and it is motivated by the fact that in two-dimensions the Sobolev space $W^{2,p}$ embeds into $C^{1, \frac{p-2}{p}}$ if $p>2$. The extension of our analysis to the case $p=2$ seems to require different ideas. 

The regularized free-energy functional then reads 
\beq\label{i0rep}
\int_{\Om_h} W(E(u))\, dz+\int_{\Gamma_h}\Bigl(\psi(\nu)+\frac\e{p}|H|^p\Bigr)\, d\H^{2}\,,
\eeq
and \eqref{i1} becomes 
\begin{equation}
V=\Delta_\Gamma\left[\Div_{\Gamma}(D\psi(\nu))+W(E(u))-\e\Bigl(\Delta_{\Gamma}(|H|^{p-2}H)
- |H|^{p-2}H\Bigl(\kappa_1^2+\kappa_2^2-\frac1pH^2\Bigr)\Bigr)\right]\,. \label{sixth order evolution equation}
\end{equation}
Sixth-order evolution equations of this type have  already been  considered in \cite{GurJab} for the case without elasticity. Its two-dimensional version  was studied numerically in \cite{siegel-miksis-voorhees04} for the evolution of  voids in elastically stressed materials, and analytically in \cite{FFLM2} in the context of evolving one-dimensional graphs. We also refer to \cite{RRV, BHSV} and references therein for some numerical results in the three-dimensional case. However, to the best of our knowledge no analytical results were available in the literature prior to ours. 

As in \cite{FFLM2}, in this paper we focus on evolving graphs, and to be precise on the case where \eqref{sixth order evolution equation}  models the evolution toward equilibrium of epitaxially strained elastic films  deposited over a rigid substrate. Given $Q:=(0, b)^2$, $b>0$, we look for a spatially $Q$-periodic solution to the following Cauchy problem:
\beq\label{leiintro}
\begin{cases}
\displaystyle\frac 1J\frac{\partial h}{\partial t}=\Delta_\Gamma\left[\Div_{\Gamma}(D\psi(\nu))+W(E(u))
\vphantom{\frac1p}\right.\\
\displaystyle\qquad\qquad\qquad \left.-\e\Bigl(\Delta_{\Gamma}(|H|^{p-2}H)
- |H|^{p-2}H\Bigl(\kappa_1^2+\kappa_2^2-\frac1pH^2\Bigr)\Bigr)\right]\,,
& \text{in $\R^2\times (0, T_0)$,}\\
\Div\,\C E(u)=0 \quad \text{in $\Om_{h}$},\\
\C E(u)[\nu]=0 \quad \text{on $\Gamma_h$,} \qquad u(x ,0, t)=(e^1_0 x_1, e_0^2 x_2,0)\,,\\
h(\cdot, t) \text{ and }D u(\cdot,t)  \quad \text{ are $Q$-periodic,}\\
h(\cdot, 0)=h_0\,,
\end{cases}
\eeq
where, we recall, $h:\R^2\times [0,T_0]\to (0, +\infty)$ denotes the  function describing the two-dimensional profile $\Gamma_h$ of the film, $$
J:=\sqrt{1+|D_x h|^2}\,,\qquad\qquad 
$$  
 $W(A):=\frac12 \C A:A$  for all $A\in \mathbb{M}^{2\times2}_{\rm sym}$ with $\C$ a positive definite fourth
 order tensor,   $e_0:=(e_0^1, e_0^2)$, with $e_0^1$, $e_0^2>0$, is a vector  that embodies the mismatch between the crystalline lattices of the film and the substrate,  and $h_0\in H^2_{loc}(\R^2)$ is a $Q$-periodic function. 
 Note that in  \eqref{leiintro} the sixth-order (geometric) parabolic equation for the film profile is coupled with the elliptic system of elastic equilibrium equations in the bulk.  

It was  observed by  Cahn and Taylor in \cite{cahn-taylor94} that the surface diffusion equation can be regarded as a gradient flow of the  free-energy functional with respect to a suitable $H^{-1}$-Riemannian structure. To formally illustrate  this point, consider the manifold of subsets of $Q\times (0, +\infty)$ of fixed volume $d$, which are subgraphs of a $Q$-periodic function, that is, 
$$
\mathcal{M}:=\Bigl\{\Om_h:\, h\text{ $Q$-periodic, } \int_Q h\, dx=d\Bigr\}\,,
$$
where  $\Om_h:=\{(x,y):\, x\in Q\,, 0<y<h(x)\}$. 
The tangent space $T_{\Om_h}\mathcal M$ at an element $\Om_h$ is described by the kinematically admissible normal velocities
$$
T_{\Om_h}\mathcal M:=\Bigl\{V: \Gamma_h\to \R:\, V\text{is $Q$-periodic, }\int_{\Gamma_h}V\, d\H^2=0\Bigr\}\,,
$$
where $\Gamma_h$ is the graph of $h$ over the periodicity cell $Q$, and it is endowed with the $H^{-1}$ metric tensor
$$
g_{\Om_h}(V_1, V_2):=\int_{\Gamma_h}\nabla_{\Gamma_h} w_1\nabla_{\Gamma_h} w_2\, d\H^2 \qquad\text{for all }V_1,\, V_2\in T_{\Om_h}\mathcal M\,,
$$
where $w_i$, $i=1,2$, is the  solution to
$$
\begin{cases}
-\Delta_{\Gamma_h}w_i=V_i & \text{on $\Gamma_h$,}\\
w_i \text{ is $Q$-periodic,}\\
\displaystyle\int_{\Gamma_h}w_i\, d\H^2=0\,.
\end{cases}
$$
Consider now the following {\em reduced free-energy functional}
$$
G(\Om_h):=\int_{\Om_h} W(E(u_h))\, dz+\int_{\Gamma_h}\Bigl(\psi(\nu)+\frac\e{p}|H|^p\Bigr)\, d\H^{2}\,,
$$
 where  $u_h$ is the minimizer of the elastic energy in $\Om_h$ under the boundary and periodicity conditions described above. Then, the evolution described by  \eqref{leiintro} is such that at each time the normal velocity $V$ of the evolving profile $h(t)$ is the element of the tangent space 
 $T_{\Om_{h(t)}}\mathcal M$ corresponding to the steepest descent of $G$, i.e., \eqref{leiintro} may be formally rewritten as
 $$
 g_{\Om_{h(t)}}(V, \tilde V)=-\partial G(\Om_{h(t)})[\tilde V]\qquad \text{for all $\tilde V\in T_{\Om_{h(t)}}\mathcal M$,}
$$
 where $\partial G(h(t))[\tilde V]$ stands for the first variation of $G$ at $\Om_{h(t)}$ in the direction $\tilde V$.

In order to solve \eqref{leiintro}, we take advantage of  this gradient flow structure and we implement a  {\em minimizing movements scheme} (see \cite{ambrosio95}), which consists in constructing  discrete time evolutions by solving iteratively suitable minimum incremental problems.  

It is interesting to observe that the gradient flow of the free-energy functional $G$ with respect to an $L^2$-Riemannian structure, (instead of $H^{-1}$)  leads to a fourth order evolution equation, which describes motion  by evaporation-condensation (see \cite{cahn-taylor94,GurJab} and \cite{Pi}, where the  two-dimensional case was studied analytically).

%Precisely, we start with a $b$-periodic initial datum $h_0\in H^2_{loc}(\R)$ and, given $T>0$, $N\in \N$, for $i=1,\dots, N$, we define inductively $h_{i,N}$ as the  minimizer of 
%\beq\label{penal}
% G(h)+\frac1{2\tau}d^2(h, h_{i-1, N})\,,
% \eeq 
% where $\tau:=\frac{T}{N}$ and $d$ is a suitable  term measuring the  $H^{-1}$-distance between  $h$ and $h_{i-1, N}$.  
% We mention here that minimizing movements have been already successfully implemented to treat various mean curvature type  flows without surface diffusion (see, e.g.,  \cite{ATW, CC, BCCN}).
 
This paper is organized as follows.  
 In Section~\ref{sec:incremental}  we set up  the problem  and introduce the discrete  time  evolutions. In Section~\ref{sec:existence} we prove our main local-in-time existence result for \eqref{leiintro}, by showing that (up to subsequences) the discrete time evolutions converge to a weak solution of \eqref{leiintro}  in $[0, T_0]$ for some 
 $T_0>0$ (see Theorem~\ref{th:existence}). By a {\em  $Q$-periodic weak solution}  we mean a function  $h\in H^1(0,T_0; H^{-1}_{\#}(Q))\cap L^{\infty}(0,T_0; H^2_{\#}(Q))$,  such that  $(h, u_h)$ satisfies the system \eqref{leiintro} in the distributional sense (see Definition~\ref{def:weaksol}).
 To the best of our knowledge,  Theorem~\ref{th:existence}  is the first (short time) existence result for a surface diffusion type geometric evolution equation in the presence of elasticity in three-dimensions.
 Moreover, also the use of minimizing movements  appears to be new in the context of higher order geometric flows (the only other papers we are aware of in which a similar approach is adopted, but in two-dimensions,  are \cite{FFLM2} and \cite{Pi}). 
 
 Compared to mean curvature flows, where the minimizing movements algorithm is nowadays classical after the pioneering work of \cite{ATW} (see also \cite{C, BCCN, CC}), a major technical difference lies in the fact that no comparison principle is available in this higher order framework. The convergence analysis is instead based on subtle interpolation and regularity estimates.
It is worth mentioning that for geometric surface diffusion equation without elasticity and without curvature regularization 
$$
V=\Delta_{\Gamma}H
$$
(corresponding to the case $W=0$, $\psi=1$, and $\e=0$) short time existence of a smooth solution was proved in \cite{EMS},  using semigroup
 techniques. See also \cite{BMN, mantegazza}. It is still an open question whether the solution constructed via the minimizing movement scheme is unique, and thus independent of the subsequence.
 
%  A delicate  point in the proof of our existence result is  the choice of $d$ in the penalization term in \eqref{penal} (see Remark~\ref{rm:pena}). A rather technical obstacle is overcome in Theorem~\ref{th:4}, where it is proved that  the solutions of the discrete time evolutions are equicontinuous in time with values in $C^{1,\alpha}$ for all $\alpha\in (0, \frac12)$, at least for an initial time interval $[0,T_0]$. This property is crucial to guarantee that  the evolving graphs do not develop vertical parts in $[0, T_0]$. However, our variational procedure provides a global in time  volume preserving  evolution, which satisfies \eqref{leiintro} until the vertical parts appear (see Theorem~\ref{th:3}). The main existence result is established in Theorem~\ref{th:existence}.
 
In Section~\ref{sec:liapunov} we address the Liapunov stability of the flat configuration, corresponding to an horizontal (flat) profile. Roughly speaking, we show that if the surface energy density is strictly convex and the second variation of  the functional \eqref{i0}
 at a given flat configuration is positive definite, then such a  configuration is asymptotically stable, that is, for all initial data $h_0$  sufficiently close to it the corresponding evolutions constructed via minimizing movements exist for all times, and  converge asymptotically to the flat configuration as $t\to+\infty$ (see Theorem~\ref{th:boli}).
We remark that Theorem~\ref{th:boli} may be regarded as an evolutionary counterpart of the static stability analysis of the flat configuration performed 
in \cite{FM09, Bo0, Bo}. In Theorem~\ref{th:bonacinievol} we address also the case of a non-convex anisotropy and we show that if the corresponding Wulff shape contains an horizontal facet, then the Asaro-Grinfeld-Tiller instability does not occur and the flat configuration is {\em always} Liapunov stable (see \cite{Bo0, Bo} for the corresponding result in the static case). Both results are completely new even in the two-dimensional case, to which they obviously apply 
(see Subsection~\ref{subsec:2d}). We remark that our treatment is purely variational and it is hinged on the fact that \eqref{i0rep} is a Liapunov functional for the evolution. 

Finally, in the  Appendix, we collect several auxiliary results that are used troughout the paper.

\section{Setting of the problem}\label{sec:incremental}

Let $Q:=(0,b)^{2}\subset\R^{2}$, $b>0$, $p>2$, and let $h_0\in W^{2,p}_{\#}(Q)$  be a positive function, describing the initial profile of the film. We recall that $W^{2,p}_{\#}(Q)$ stands for the subspace of $W^{2,p}(Q)$ of all functions  whose $Q$-periodic extension belong to $W^{2,p}_{loc}(\R^2)$. 
Given $h\in W^{2,p}_{\#}(Q)$, with $h\geq 0$, we set 
$$
\Om_h:=\{(x, y)\in Q\times\R:\, 0<y<h(x)\} 
$$
and we denote by $\Gamma_h$ the graph of $h$ over $Q$. We will identify a function
$h\in  W^{2,p}_{\#}(Q)$ with its periodic extension to $\R^{2}$, and denote by $\Om^{\#}_h$ and $\Gamma_h^\#$ the open subgraph and the graph of such extension, respectively. Note that $\Om^{\#}_h$ is  the periodic extension of $\Om_h$.  Set
\begin{multline*}
LD_\#(\Om_h;\R^3){:=} \bigl\{u\in L^2_{\rm loc}(\Om^\#_h;\R^3):\, u(x,y)=u(x{+}bk,y)\text{ for }(x,y)\in \Om_h^\# \text{ and }k\in\Z^{2}\,, \\E(u)|_{{\Om_h}}\in L^2(\Om_h;\R^3)\bigr\}\,,
\end{multline*}
where $E(u):=\frac12(Du+D^T u)$, 
with $Du$ the distributional gradient of $u$ and $D^T u$ its transpose, is the strain of the displacement  $u$.  We  prescribe the  Dirichlet boundary condition 
$u(x,0)=w_0(x,0)$ for $x\in  Q$, with $w_0\in H^1(U\times (0,+\infty))$ for every bounded open subset $U\subset \R^2$ and such that $Dw_0(\cdot, y)$ is $Q$-periodic for a.e. $y>0$.  A typical choice is given by $w_0(x,y):=(e^1_0 x_1, e_0^2 x_2,0)$, where the vector $e_0:=(e_0^1, e_0^2)$, with $e_0^1$, $e_0^2>0$, embodies the mismatch between the crystalline lattices of film and substrate.  
Define 
\begin{align*}
X:=\Bigl\{(h,u):\,h\in W^{2,p}_{\#}(Q),\, h\geq 0,\, u:\Om_h^\#\to \R^3\,\, \text{\rm s.t. } u-w_0 & \in LD_\#(\Om_{h};\R^3)\,,\\
&\text{ and }u(x,0)=w_0\,\, \text{for all $x\in\R^{2}$}\Bigr\}\,.
\end{align*}
The elastic energy density 
$W:\mathbb{M}^{3\times 3}_{\rm sym}\to [0, +\infty)$ takes the form
 $$
W(A):=\frac{1}{2}\C A:A\,,
$$
with $\C$ a positive definite fourth-order tensor, so that  $W(A)>0$ for all $A\in \mathbb{M}^{3\times 3}_{\rm sym}\setminus\{0\}$.
Given $ h\in W^{2,p}_{\#}(Q),\, h\geq 0$, we denote by $u_h$ the corresponding elastic equilibrium in $\Om_h$, i.e., 
$$
u_h:=\operatorname*{argmin} \biggl\{\int_{\Om_h}W(E(u))\, dz:\, u\in w_0+LD_\#(\Om_{h};\R^3),\, u(x,0)=w_0(x,0)\biggr\}\,.
$$
Let $\psi:\R^3\to [0, +\infty)$ be a positively one-homogeneous function of class $C^2$ away from the origin.
Note that, in particular,
\beq\label{sotto}
\frac1c|\xi|\leq \psi(\xi)\leq c|\xi| \qquad\text{for all $\xi\in \R^3$}\,,
\eeq
for some  $c>0$.

 We now introduce the energy functional
\beq\label{EF}
F(h,u):=\int_{\Omega_h}W(E(u))\,dz+\int_{\Gamma_h}\Bigl(\psi(\nu)+\frac\e{p}|H|^p\Bigr)\, d\H^{2}\,,
\eeq
defined for all $(h,u)\in X$,
where $\nu$ is the outer unit normal to $\Om_h$,  $H=\mathrm{div}_{\Gamma_h}\nu$ denotes the sum of the principal curvatures of $\Gamma_h$, and $\e$ is a positive constant. In the sequel we will often use the fact that 
%%%%%%%%%%%%%%%%change%%%%%%%%%%
\begin{equation}
-\Div\biggl(\frac{Dh}{\sqrt{1+|Dh |^2}}\biggr)=H\qquad\text{in $Q$}\,,
\label{div eq}
\end{equation}
which, in turn, implies
\begin{equation}
\int_Q H\,dx=0\,.
\label{average 0}
\end{equation}
%%%%%%%%%%%%%%%%%%%%

\begin{remark}  \label{notation} {\bf Notation:}  In the sequel we denote by $z$ a generic point in $Q\times\R$ and we write $z=(x,y)$ with $x\in Q$ and $y\in \R$. Moreover,  given  $g:\Gamma_h\to \R$, where $\Gamma_h$ is the graph of some function $h$ defined in $Q$, we denote by the same symbol $g$  the function  from $Q$ to $\R$ given by $x\mapsto g(x, h(x))$.  Consistently, $Dg$ will stand for the gradient of the function from $Q$ to $\R$ just defined.
\end{remark}  
%{\bf Change the definition of the regularizing functional, by replacing $|H|^p$ by  
%$$
%g_\gamma(H):=\begin{cases}
%\frac12 H^2 & \text{if $|H|\leq \gamma$,}\\
%\frac{1}p\gamma^{2-p}|H|^p+\frac{p-2}{2p}\gamma^2 & \text{otherwise.}
%\end{cases}
%$$
%Thus the functional \eqref{EF} becomes
%$$
%F(h,u):=\int_{\Omega_h}W(E(u))\,dz+\int_{\Gamma_h}\Bigl(\psi(\nu)+\e g_\gamma(H)\Bigr)\, d\H^{2}\,,
%$$
%
%}

\subsection{The incremental minimum problem}

In this subsection we introduce the incremental minimum problems that will be used to define the discrete time evolutions. 
As  standing assumption throughout this paper, we start  from an initial configuration $(h_0, u_0)\in X$, such that 
\beq\label{sa}
h_0\in W^{2,p}_\#(Q)\,,\qquad h_0>0\,, 
\eeq
and $u_0$ minimizes the elastic energy in $\Om_{h_0}$ among all $u$ with $(h_0, u)\in X$.
%Here, and in what follows, we  denote by $H^k_\#(0,l)$ the space of all functions in $H^k_{loc}(\R)$ that are $l$-periodic, endowed with the norm of $H^k(0,l)$. A similar convention will be used also for other functional spaces. Also,  $H^{-1}_{\#}(0,l)$ stands for the dual space of $H^1_\#(0,l)$.

Fix  a sequence $\tau_n\searrow 0$ representing the discrete time increments.  For $i\in \N$ we define inductively $(h_{i,n}, u_{i,n})$ as the 
solution of the minimum problem
\begin{multline}\label{pin}
\min\Biggl\{F(h,u)+\frac{1}{2\tau_n}\int_{\Gamma_{i-1,n}}|D_{\Gamma_{i-1,n}}v_{h}|^2\,d\H^{2}:\, (h,u)\in X\,, \\ \|Dh\|_{L^\infty(Q)}\leq \Lambda_0\,, \int_Qh\, dx= \int_Qh_{0}\, dx\Biggr\}\,,
\end{multline}
where $\Gamma_{i-1,n}$ stands for $\Gamma_{h_{i-1,n}}$, $\Lambda_0$ is a positive constant such  that
%%%%%%%%%%%change%%%%%
{
 \begin{equation}\Lambda_0>\|h_0\|_{C^1_\#(Q)}\,,\label{Lambda0}
\end{equation} 
}
%%%%%%%%%%%%%%%%%
and  $v_{h}$ is the unique solution in $H^{1}_\#(\Gamma_{h_{i-1,n}})$ to the following problem:
\beq\label{vacca}
\begin{cases}
\displaystyle\Delta_{\Gamma_{i-1,n}}v_{h}=\frac{h-h_{i-1,n}}{\sqrt{1+|Dh_{i-1,n} |^2}}\circ\pi\,,\vspace{7pt}\\
\displaystyle\int_{\Gamma_{h_{i-1,n}}} v_h\, d\H^{2}=0\,,
\end{cases}
\eeq
where $\pi$ is the canonical projection $\pi(x,y)=x$.
For $x\in Q$ and  $(i-1)\tau_n\leq t\leq i\tau_n$, $i\in \N$,  we define the linear interpolation
\beq\label{hn}
h_n(x,t):=h_{i-1,n}(x)+\frac{1}{\tau_n}\bigl(t-(i-1)\tau_n\bigr)\bigl(h_{i,n}(x)-h_{i-1,n}(x)\bigr)\,,
\eeq
and we let $u_n(\cdot, t)$ be the {\em elastic equilibrium corresponding to $h_n(\cdot, t)$}, i.e.,
%%%%%%%%%%%change%%%%%
{
\begin{equation}
F(h_n(\cdot, t),u_n(\cdot, t))=\min_{(h_n(\cdot, t), u)\in X}F(h_n(\cdot, t),u)\,.\label{min Fn}
\end{equation} 
}
%%%%%%%%%%%%%%%

The remaining of this subsection is devoted to the proof of the existence of a minimizer for the minimum incremental problem \eqref{pin}.

\begin{theorem}\label{th:incremental}
The minimum problem \eqref{pin} admits a solution $(h_{i,n}, u_{i,n})\in X$.
\end{theorem}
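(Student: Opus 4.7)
The plan is to apply the direct method of the calculus of variations. The pair $(h_{i-1,n}, u_{i-1,n})$ is admissible (it gives $v_h\equiv 0$) and has finite energy thanks to \eqref{Lambda0}, so the infimum in \eqref{pin} is finite. I would fix a minimizing sequence $(h_k,u_k)\in X$; since replacing $u_k$ by the elastic equilibrium $u_{h_k}$ in $\Om_{h_k}$ only decreases $F$, I may assume $u_k=u_{h_k}$.

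Next, I would establish compactness. The constraint $\|Dh_k\|_{L^\infty(Q)}\le\Lambda_0$, together with $h_k\ge 0$ and the volume constraint $\int_Q h_k\,dx=\int_Q h_0\,dx$, bounds $h_k$ uniformly in $W^{1,\infty}_\#(Q)$, and the area element $J_k:=\sqrt{1+|Dh_k|^2}$ stays between two positive constants. The energy bound on $\int_{\G_{h_k}}|H_k|^p\,d\H^{2}=\int_Q|H_k|^p J_k\,dx$ then yields $\|H_k\|_{L^p(Q)}\le C$, and, rewriting \eqref{div eq} as a linear uniformly elliptic equation in non-divergence form whose coefficients depend only on $Dh_k$ (hence are uniformly bounded), Calder\'on--Zygmund estimates upgrade this to a uniform bound for $h_k$ in $W^{2,p}_\#(Q)$. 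Since $p>2$, the compact embedding $W^{2,p}_\#(Q)\hookrightarrow C^{1,\alpha}_\#(Q)$ with $\alpha=(p-2)/p$ yields (up to a subsequence) $h_k\wto h$ in $W^{2,p}_\#(Q)$ and $h_k\to h$ in $C^{1,\alpha}$. All constraints pass to the limit, so $(h,u_h)\in X$ is admissible.

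It then remains to show lower semicontinuity of each term of the functional along this subsequence. The anisotropic term $\int_Q \psi(-Dh_k,1)\,dx$ converges by continuity of $\psi$ and strong convergence of $Dh_k$; the term $\int_Q|H_k|^p J_k\,dx$ is lower semicontinuous under weak $W^{2,p}$ convergence combined with uniform convergence of $J_k$; and the metric term is actually continuous, because the base surface $\G_{i-1,n}$ in \eqref{vacca} is fixed and the right-hand side $(h_k-h_{i-1,n})/\sqrt{1+|Dh_{i-1,n}|^2}\circ\pi$ converges uniformly (with zero mean), so classical elliptic estimates give $v_{h_k}\to v_h$ strongly in $H^1_\#(\G_{i-1,n})$. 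Finally, the elastic energy $\int_{\Om_{h_k}}W(E(u_{h_k}))\,dz$ converges to $\int_{\Om_h}W(E(u_h))\,dz$ by continuity of the equilibrium elastic energy under $C^1$ convergence of the profile, a classical fact in the analysis of strained films (see \cite{FFLM,FM09}, and the auxiliary results in the Appendix). Combining these passages to the limit shows that $(h,u_h)$ attains the infimum.

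The main obstacle is precisely this last convergence, because the Sobolev space in which $u_{h_k}$ lives varies with $k$. The standard fix is to flatten $\Om_{h_k}$ to a fixed cylindrical reference domain via a diffeomorphism built from $h_k$; the strong $C^{1,\alpha}$ convergence of $h_k$ makes these transformations uniformly bi-Lipschitz, and the Dirichlet condition $u=w_0$ on $\{y=0\}$ removes the rigid-motion indeterminacy in the ensuing Korn-type estimates, reducing the matter to a standard weak $H^1$ compactness plus lower semicontinuity argument on the fixed reference domain.
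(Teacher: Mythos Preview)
Your proposal is correct and follows essentially the same direct-method approach as the paper: bounded $H_k$ in $L^p$ plus the gradient constraint gives a $W^{2,p}_\#$ bound (the paper invokes Lemma~\ref{lm:morini}, which is exactly the Calder\'on--Zygmund estimate you cite), hence weak $W^{2,p}$ and strong $C^{1,\alpha}$ compactness; the surface and penalty terms are then handled identically.

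The one minor difference is in the elastic term. You first replace $u_k$ by the equilibrium $u_{h_k}$ and then argue \emph{continuity} of the equilibrium elastic energy under $C^{1,\alpha}$ convergence of the profile. The paper instead keeps the general $u_k$, uses the uniform bound on $\int_{\Om_{h_k}}|E(u_k)|^2\,dz$ together with Korn's inequality (reasoning as in \cite[Proposition~2.2]{FFLM}) to extract a weak $H^1_{\rm loc}$ limit $u$, and then only invokes \emph{lower semicontinuity}. Your route is slightly stronger in its conclusion (you get actual convergence of the energy) but relies on the continuity result for equilibria, which does hold here; the paper's route is lighter in that it avoids that continuity statement altogether. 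Either way the argument goes through.
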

\begin{proof}
Let $\{(h_k,u_k)\}\subset X$ be a minimizing sequence for \eqref{pin}. Let $H_k$ denote the sum of principal curvatures of $\Gamma_{h_k}$. Since the sequence $\{H_k\}$ is bounded in $L^p(Q)$ and $\|Dh_k\|_{L^\infty_\#(Q)}\leq \Lambda_0$, it follows from \eqref{div eq} and
 Lemma~\ref{lm:morini} that $\|h_k\|_{W^{2,p}_{\#}(Q)}\leq C$. Then, up to a subsequence (not relabelled), we may assume that $h_k\wto h$ weakly in $W^{2,p}_{\#}(Q)$, and thus strongly in $C_\#^{1,\alpha}(Q)$ for some $\alpha>0$. As a consequence, $H_k\wto H$ in $L^p(Q)$, where $H$ is the sum of the principal curvatures of $\Gamma_h$. In turn, the $L^p$-weak convergence of $\{H_k\}$ and the $C^1$-convergence of $\{h_k\}$ imply   by lower semicontinuity  that 
\beq\label{ire1}
\int_{\Gamma_h}\Bigl(\psi(\nu)+\frac\e{p}|H|^p\Bigr)\, d\H^{2}\leq \liminf_{k}
\int_{\Gamma_{h_k}}\Bigl(\psi(\nu)+\frac\e{p}|H_k|^p\Bigr)\, d\H^{2}\,.
\eeq
Moreover, we also have that $v_{h_k}\to v_h$ strongly in $H^1(\Gamma_{i-1, n})$, and thus  
\beq\label{ire2}
\lim_k\frac{1}{2\tau_n}\int_{\Gamma_{i-1,n}}|D_{\Gamma_{i-1,n}}v_{h_k}|^2\,d\H^{2}=\frac{1}{2\tau_n}\int_{\Gamma_{i-1,n}}|D_{\Gamma_{i-1,n}}v_{h}|^2\,d\H^{2}\,.
\eeq
Finally, since $\sup_k\int_{\Om_{h_k}}|Eu_k|^2\, dz<+\infty$, reasoning as in \cite[Proposition 2.2]{FFLM}, from the uniform convergence of $\{h_k\}$ to $h$ and Korn's inequality we conclude that there exists $u\in H^1_{loc}(\Om^\#_h;\R^3)$ such that $(h,u)\in X$ and, up to  a subsequence, $u_k\wto u$ weakly in 
$H^1_{loc}(\Om^\#_h;\R^3)$. Therefore, we have that 
$$
\int_{\Omega_h}W(E(u))\,dz\le \liminf_k\int_{\Omega_{h_k}}W(E(u_k))\,dz\,,
$$ 
which, together with \eqref{ire1} and \eqref{ire2}, allows us to conclude that $(h,u)$ is a minimizer.
\end{proof}

\section{Existence of the evolution}\label{sec:existence}
In this section we prove short time existence of a solution of the geometric evolution equation
\beq\label{geq}
V=\Delta_\Gamma\Bigl[\Div_{\Gamma}(D\psi(\nu))+W(E(u))-\e\Bigl(\Delta_{\Gamma}(|H|^{p-2}H)-
\frac1p|H|^pH+|H|^{p-2}H|B|^2\Bigr)\Bigr]\,,
\eeq
where $V$ denotes the outer normal velocity of $\Gamma_{h(\cdot, t)}$, $|B|^2$ is the sum of the squares of the principal curvatures of $\Gamma_{h(\cdot, t)}$,  $u(\cdot, t)$ is the elastic equilbrium in $\Om_{h(\cdot, t)}$, and $W(E(u))$ is the trace of $W(E(u(\cdot, t)))$ on 
$\Gamma_{h(\cdot, t)}$. In the sequel we denote by $H^{-1}_\#(Q)$ the dual space of $H^{1}_\#(Q)$. Note that if $f\in H^{1}_\#(Q)$, then
$\Delta f$ can be identified with the element of $H^{-1}_\#(Q)$ defined by
$$
\langle \Delta f, g\rangle:=-\int_Q  DfDg\, dx\qquad \text{for all }g\in H^{1}_\#(Q)\,.
$$
Moreover, a function $f\in L^2(Q)$ can be identified with the element of $H^{-1}_\#(Q)$ defined by
$$
\langle  f, g\rangle:=\int_Q  fg\, dx\qquad \text{for all }g\in H^{1}_\#(Q)\,.
$$
\begin{definition}\label{def:weaksol}
Let $T_0>0$. We say that $h\in L^{\infty}(0,T_0; W^{2,p}_\#(Q))\cap H^1(0,T_0; H^{-1}_\#(Q))$ is a  {\em solution of} \eqref{geq} in $[0,T_0]$ if
\begin{itemize}
\item[(i)] $\Div_{\Gamma}(D\psi(\nu))+W(E(u))-\e\Bigl(\Delta_{\Gamma}(|H|^{p-2}H)-
\frac1p|H|^pH+|H|^{p-2}H|B|^2\Bigr)\in L^2(0,T_0; H^1_\#(Q))$,
\item[(ii)] for a.e. $t\in (0,T_0)$
$$
\frac{1}{J}\frac{\partial h}{\partial t}=\Delta_\Gamma\Bigl[\Div_{\Gamma}(D\psi(\nu))+W(E(u))-\e\Bigl(\Delta_{\Gamma}(|H|^{p-2}H)-
\frac1p|H|^pH+|H|^{p-2}H|B|^2\Bigr)\Bigr]\quad\text{in $H^{-1}_\#(Q)$,}
$$
\end{itemize}
where $J:=\sqrt{1+|Dh|^2}$,  $u(\cdot, t)$ is the elastic equilbrium in $\Om_{h(\cdot, t)}$, and where we wrote  $\Gamma$ in place of $\Gamma_{h(\cdot, t)}$.
\end{definition}
\begin{remark}
An immediate consequence of the above definition is that the evolution is {\em volume preserving}, that is, 
$\int_Q h(x,t)\, dx=\int_Q h_0(x)\, dx$ for all $t\in[0,T_0]$. Indeed, for all $t_1, t_2\in [0, T_0]$ and for 
$\vphi\in H^1_\#(Q)$ we have
\begin{align*}
\int_Q[h(x, t_2)-h(x,t_1)]\vphi\, dx&= \int_{t_1}^{t_2}\Bigl\langle\frac{\partial h}{\partial t}(\cdot, t), \vphi   \Big\rangle\, dt\\
&=\int_{t_1}^{t_2}\Bigl\langle J \Delta_\Gamma\Bigl[\Div_{\Gamma}(D\psi(\nu))+W(E(u))\\
&\quad-\e\Bigl(\Delta_{\Gamma}(|H|^{p-2}H)-
\frac1p|H|^pH+|H|^{p-2}H|B|^2\Bigr)\Bigr], \vphi  \Big\rangle\, dt\\
&=-\int_{t_1}^{t_2}\int_\Gamma D_\Gamma\Bigl[\Div_{\Gamma}(D\psi(\nu))+W(E(u))\\
&\quad-\e\Bigl(\Delta_{\Gamma}(|H|^{p-2}H)-
\frac1p|H|^pH+|H|^{p-2}H|B|^2\Bigr)\Bigr] D_{\Gamma}(\vphi\circ\pi)\,d\H^2  dt\,.
\end{align*}
Choosing $\vphi=1$, we conclude that 
$$
\int_Q[h(x, t_2)-h(x,t_1)] \,dx=0\,.
$$
\end{remark}
\begin{remark}\label{rm:normah-1}
In the sequel, we consider the following equivalent norm on $H^{-1}_\#(Q)$. Given $\mu\in H^{-1}_\#(Q)$, we set
$$
\|\mu\|_{H^{-1}_\#(Q)}:=\sup\bigl\{\langle\mu, g\rangle:\, g\in H^{1}_\#(Q)\text{ s.t. $\bigl|\textstyle{\int_Q g\, dx}\bigr|+\|Dg\|_{L^2(Q)}\leq 1$}\bigr\}\,.
$$
Note that if $f\in L^2(Q)$, with $\int_Qf\, dx=0$, we have
$$
\|f\|_{H^{-1}_\#(Q)}=\|Dw\|_{L^2(Q)}\,,
$$
where $w\in H^{1}_\#(Q)$ is the unique periodic solution to the problem
\beq\label{vudoppio}
\begin{cases}
\Delta w=f  & \text{in $Q$,}\vspace{5pt}\\
\displaystyle\int_Q w\, dx=0\,.
\end{cases}
\eeq
To see this, first observe that since $\int_Qf\,dx=0$ we have
$$
\|f\|_{H^{-1}_\#(Q)}=\sup\biggl\{\int_Q f g\, dx:\, g\in H^{1}_\#(Q)\text{ s.t. $\textstyle{\int_Q g\, dx=0}$ and $\|Dg\|_{L^2(Q)}\leq 1$}\biggr\}\,.
$$
Thus, since by \eqref{vudoppio}
$$
\int_Q fg\, dx=-\int_QDwDg\, dx\leq\|Dw\|_{L^2(Q)}\,,
$$
we have $\|f\|_{H^{-1}_\#(Q)}\leq \|Dw\|_{L^2(Q)}$. The opposite inequality follows by taking $g=-w/\|Dw\|_{L^2(Q)}$.
\end{remark}
\begin{theorem}\label{th:pre-pippa}
For all $n$, $i\in\N$ we have
\beq\label{31}
\int_0^{+\infty}\Bigl\|\frac{\partial h_n}{\partial t}\Bigr\|^2_{H^{-1}_\#(Q)}dt\leq CF(h_0, u_0)\,,
\eeq
\beq\label{31.5}
F(h_{i,n}, u_{i,n})\leq F(h_{i-1,n}, u_{i-1,n}) \leq F(h_0, u_0)\,,
\eeq
and 
\beq\label{ghnboundbis}
\sup_{t\in[0,+\infty)}\|h_{n}(\cdot, t)\|_{W^{2,p}_\#(Q)}<+\infty
\eeq
for some $C=C(\Lambda_0)>0$.
 Moreover, up to a subsequence, 
\beq\label{32}
h_n\to h \text{ in $C^{0,\alpha}([0,T]; L^2(Q))$ for all $\alpha\in (0, \tfrac{1}{4})$,}
\quad h_n\wto h \text{ weakly in $H^1(0,T; H^{-1}_{\#}(Q))$}
\eeq
for all $T>0$ and for some function $h$ such that $h(\cdot, t)\in W^{2,p}_{\#}(Q)$ for every $t\in [0, +\infty)$ and 
\beq\label{32.5}
F(h(\cdot, t), u_{h(\cdot, t)})\leq F(h_0, u_0) \qquad \text{for all $t\in [0,+\infty)$.}
\eeq
\end{theorem}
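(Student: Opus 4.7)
The plan is to extract all five assertions from two elementary inputs: the minimality of $(h_{i,n},u_{i,n})$ against the admissible competitor $(h_{i-1,n},u_{i-1,n})$ (for which $v_{h_{i-1,n}}\equiv 0$ by \eqref{vacca}); and the identification, via \eqref{vacca} under the graph parameterization, of the discrete time derivative $(h_{i,n}-h_{i-1,n})/\tau_n$ with $(J_{i-1,n}/\tau_n)\,\Delta_{\Gamma_{i-1,n}}v_{h_{i,n}}$, where $J_{i-1,n}:=\sqrt{1+|Dh_{i-1,n}|^2}$. The first input immediately yields \eqref{31.5} together with
$$\frac{1}{2\tau_n}\int_{\Gamma_{i-1,n}}|D_{\Gamma_{i-1,n}}v_{h_{i,n}}|^2\,d\H^2\le F(h_{i-1,n},u_{i-1,n})-F(h_{i,n},u_{i,n}),$$
and summing telescopes to $\sum_{i}\tfrac{1}{2\tau_n}\|D_{\Gamma_{i-1,n}}v_{h_{i,n}}\|_{L^2}^2\le F(h_0,u_0)$. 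For \eqref{31}, I test $(h_{i,n}-h_{i-1,n})/\tau_n$ against a mean-zero $g\in H^1_\#(Q)$, substitute the identification above, change variables to $\Gamma_{i-1,n}$, and integrate by parts on the surface to obtain
$$\int_Q\frac{h_{i,n}-h_{i-1,n}}{\tau_n}\,g\,dx=-\frac{1}{\tau_n}\int_{\Gamma_{i-1,n}}D_{\Gamma_{i-1,n}}v_{h_{i,n}}\cdot D_{\Gamma_{i-1,n}}(g\circ\pi)\,d\H^2.$$
Since $|D_{\Gamma_{i-1,n}}(g\circ\pi)|\le |Dg|$ pointwise and $\|Dh_{i-1,n}\|_\infty\le\Lambda_0$ controls the surface area element, Cauchy--Schwarz together with Remark~\ref{rm:normah-1} gives $\|\partial_t h_n\|_{H^{-1}_\#(Q)}\le (C(\Lambda_0)/\tau_n)\|D_{\Gamma_{i-1,n}}v_{h_{i,n}}\|_{L^2}$ on $(t_{i-1},t_i)$. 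Squaring, multiplying by $\tau_n$, and inserting the dissipation bound yields \eqref{31}.

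For \eqref{ghnboundbis}, the bound \eqref{31.5} controls $\int_{\Gamma_{h_{i,n}}}|H|^p\,d\H^2$ uniformly in $i$ and $n$; combined with $\|Dh_{i,n}\|_\infty\le\Lambda_0$ and \eqref{div eq}, Lemma~\ref{lm:morini} produces a uniform $W^{2,p}_\#(Q)$ estimate on $h_{i,n}$, which passes to the linear interpolant $h_n(\cdot,t)$.

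For the convergence \eqref{32}, I combine \eqref{31} with \eqref{ghnboundbis} via interpolation. From \eqref{31}, $\|h_n(t)-h_n(s)\|_{H^{-1}_\#(Q)}\le C|t-s|^{1/2}$, while \eqref{ghnboundbis} gives $\|h_n(t)-h_n(s)\|_{H^1_\#(Q)}\le C$; the Parseval-type inequality $\|f\|_{L^2}^2\le C\|f\|_{H^{-1}_\#}\|f\|_{H^1_\#}$ for mean-zero $f$ produces a uniform $C^{0,1/4}([0,T];L^2(Q))$ bound. The compact embedding $W^{2,p}_\#(Q)\hookrightarrow L^2(Q)$ and Arzel\`a--Ascoli extract a subsequence $h_n\to h$ in $C^0([0,T];L^2(Q))$, and interpolating against the uniform $C^{0,1/4}$ bound upgrades this to $C^{0,\alpha}$ for every $\alpha<1/4$; the weak convergence in $H^1(0,T;H^{-1}_\#(Q))$ is immediate from \eqref{31}. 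For \eqref{32.5}, given $t\in[0,+\infty)$, I choose $i_n$ with $i_n\tau_n\to t$, so that $h_{i_n,n}\to h(\cdot,t)$ in $L^2(Q)$ by the H\"older bound; the uniform $W^{2,p}_\#(Q)$ control upgrades this (along a subsequence) to $C^{1,\beta}(Q)$ convergence, and the lower-semicontinuity argument already employed in Theorem~\ref{th:incremental} yields $F(h(\cdot,t),u_{h(\cdot,t)})\le\liminf_n F(h_{i_n,n},u_{i_n,n})\le F(h_0,u_0)$.

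The principal technical hurdle is the transfer of the intrinsic dissipation on $\Gamma_{i-1,n}$ to the flat $H^{-1}_\#(Q)$-norm on the reference cell: the uniform slope constraint $\|Dh\|_\infty\le\Lambda_0$ enforced by \eqref{pin} is precisely what decouples the two parameterizations, with a $\Lambda_0$-dependent constant. Everything else is variational bookkeeping and standard parabolic compactness.
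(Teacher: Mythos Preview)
Your proof is correct and follows essentially the same route as the paper. The only cosmetic variation is in the derivation of \eqref{31}: you bound $\|\partial_t h_n\|_{H^{-1}_\#(Q)}$ by direct duality against mean-zero test functions $g$, whereas the paper chooses the specific $g=w_{h_{i,n}}$ solving $\Delta w_{h_{i,n}}=h_{i,n}-h_{i-1,n}$ (which realizes the supremum in Remark~\ref{rm:normah-1}); and the ``Parseval-type'' inequality $\|f\|_{L^2}^2\le C\|f\|_{H^{-1}_\#}\|f\|_{H^1_\#}$ you invoke is exactly what the paper obtains by applying Theorem~\ref{th:A} to the solution of $\Delta w=f$.
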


\begin{proof}
By the minimality of $(h_{i,n}, u_{i,n})$ (see \eqref{pin}) we have that 
\beq\label{33}
F(h_{i,n},u_{i,n})+\frac{1}{2\tau_n}\int_{\Gamma_{i-1,n}}|D_{\Gamma_{i-1,n}}v_{h_{i,n}}|^2\,d\H^{2}\leq F(h_{i-1,n},u_{i-1,n})
\eeq
for all $i\in \N$, which yields in particular \eqref{31.5}. Hence, 
$$
\frac{1}{2\tau_n}\int_{\Gamma_{i-1,n}}|D_{\Gamma_{i-1,n}}v_{h_{i,n}}|^2\,d\H^{2}\leq 
F(h_{i-1,n},u_{i-1,n})-F(h_{i,n},u_{i,n})\,,
$$
and summing over $i$, we obtain 
\beq\label{pre-pippa0}
\sum_{i=1}^\infty\frac{1}{2\tau_n}\int_{\Gamma_{i-1,n}}|D_{\Gamma_{i-1,n}}v_{h_{i,n}}|^2\,d\H^{2}\leq 
F(h_0, u_0)\,.
\eeq
Let  $w_{h_{i,n}}\in H^1_{\#}(Q)$ denote the unique periodic solution to the  problem
$$
\begin{cases}
\Delta w_{h_{i,n}}=h_{i,n}-h_{i-1,n}  & \text{in $Q$,}\vspace{5pt}\\
\displaystyle\int_Q w_{h_{i,n}}\, dx=0\,.
\end{cases}
$$
Note that 
\begin{align*}
\int_Q|Dw_{h_{i,n}}|^2\, dx&=\int_Q\Delta w_{h_{i,n}}w_{h_{i,n}}\, dx=
\int_{\Gamma_{i-1, n}}\frac{h_{i,n}-h_{i-1,n}}{\sqrt{1+|Dh_{i-1,n} |^2}}\circ\pi w_{h_{i,n}}\, d\H^2\\
&=
\int_{\Gamma_{i-1, n}}\Delta_{\Gamma_{i-1,n}}v_{h_{i,n}} w_{h_{i,n}}\, d\H^2 
=-\int_{\Gamma_{i-1, n}}D_{\Gamma_{i-1,n}}v_{h_{i,n}} D_{\Gamma_{i-1,n}}w_{h_{i,n}}\, d\H^2\\
& \leq \|D_{\Gamma_{i-1,n}}v_{h_{i,n}} \|_{L^2(\Gamma_{i-1,n})}\|D_{\Gamma_{i-1,n}}w_{h_{i,n}} \|_{L^2(\Gamma_{i-1,n})}\\
&\leq C(\Lambda_0) \|D_{\Gamma_{i-1,n}}v_{h_{i,n}} \|_{L^2(\Gamma_{i-1,n})}\|Dw_{h_{i,n}} \|_{L^2(Q)}\,.
\end{align*}
Combining this inequality with \eqref{pre-pippa0} and recalling \eqref{hn} and Remark~\ref{rm:normah-1}, we get \eqref{31}. 

Note from \eqref{31.5} it follows that 
$$
\sup_{i,n}\int_{\Gamma_{i,n}}|H|^p\, d\H^{2}<+\infty\,.
$$
 Hence, \eqref{ghnboundbis} follows immediately by Lemma~\ref{lm:morini}, taking into account that $\|Dh_{i,n}\|_{L^\infty(Q)}\leq \Lambda_0$. Using a diagonalizing argument, it can be shown that there exist $h$ such that $h_n\wto h$ weakly in $H^1(0,T; H^{-1}_\#(Q))$ for all $T>0$. 
Note also that, by \eqref{31} and using H\"older Inequality, we have for $t_2>t_1$,
\beq\label{pre-pippa2}
\|h_n(\cdot, t_2)-h_n(\cdot, t_1)\|_{H^{-1}(Q)}  \leq  \int_{t_1}^{t_2}\Bigl\|\frac{\partial h_n(\cdot, t)}{\partial t}\Bigr\|_{H^{-1}(Q)}\, dt \leq C (t_2-t_1)^{\frac12}\,.
\eeq
Therefore, applying Theorem~\ref{th:A} to the solution  $w\in H^1_\#(Q)$ of the problem
$$
\begin{cases}
\Delta w=h_n(\cdot, t_2)-h_n(\cdot, t_1)  & \text{in $Q$,}\vspace{5pt}\\
\displaystyle\int_Q w\, dx=0\,,
\end{cases}
$$
we get 
\begin{align}
\|h_n(\cdot, t_2)-h_n(\cdot, t_1)\|_{L^2(Q)}& =\|\Delta w\|_{L^2(Q)} \leq C\|D^3 w\|^{\frac12}_{L^2(Q)}\|Dw\|^{\frac12}_{L^2(Q)}\nonumber\\
&\leq C\|Dh(\cdot, t_2)-Dh(\cdot, t_1)\|^{\frac12}_{L^2(Q)}\|h(\cdot, t_2)-h(\cdot, t_1)\|^{\frac12}_{H^{-1}(Q)}\nonumber\\
&\leq C(\Lambda_0)(t_2-t_1)^{\frac14}\,, \label{pre-pippa-final}
\end{align}
where the last inequality follows from \eqref{pre-pippa2}. By the Ascoli-Arzel\`a theorem (see e.g. \cite[Proposition 3.3.1]{AGS}), we get \eqref{32}.
Finally, inequality \eqref{32.5} follows from \eqref{31.5} by lower semicontinuity, using \eqref{32} and \eqref{ghnboundbis}.
\end{proof}

In what follows, $\{h_n\}$ and $h$ are   the subsequence and the function found in Theorem~\ref{th:pre-pippa}, respectively. The next result shows that the convergence of $\{h_n\}$ to $h$ can be significantly improved for short time. 
\begin{theorem}\label{th:pippa}
There exist $T_0>0$ and $C>0$ depending only $(h_0, u_0$) such that
\begin{itemize}
\item[(i)] $h_n\to h$ in $C^{0,\beta}([0, T_0]; C^{1,\alpha}_\#(Q))$ for every $\alpha\in (0, \tfrac{p-2}p)$ and
$\beta\in (0, \frac{(p-2-\alpha p)(p+2)}{16p^2})$,\vspace{4pt}
\item[(ii)] $\displaystyle \sup_{t\in[0, T_0]}\|Du_n(\cdot, t)\|_{C^{0,\frac{p-2}{p}}(\overline \Om_{h_n(\cdot, t)})}\leq C$,
\item[(iii)]  $E(u_n(\cdot ,h_n))\to  E(u(\cdot,h))$ in $C^{0,\beta}([0, T_0]; C^{0,\alpha}_\#(Q))$ for every $\alpha\in (0, \tfrac{p-2}p)$ and
$0\leq \beta<\frac{(p-2-\alpha p)(p+2)}{16p^2}$, where
$u(\cdot, t)$ is the elastic equilibrium in $\Om_{h(\cdot, t)}$.
\end{itemize}
Moreover,  $h(\cdot, t)\to h_0$ in $C^{1, \alpha}_\#(Q)$ as $t\to 0^+$,   
$h_n$, $h\geq C_0>0$ for some positive constant $C_0$, and
\beq\label{noconstraint}
 \sup_{t\in[0,T_0]}\|Dh_n(\cdot,t)\|_{L^{\infty}(Q)}<\Lambda_0 
 \eeq
 for all $n$. 
\end{theorem}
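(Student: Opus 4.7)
The hinge of the argument is to trade the (weak) time modulus of continuity provided by Theorem~\ref{th:pre-pippa} for a (strong) spatial norm via interpolation. Setting $\phi_n:=h_n(\cdot,t_2)-h_n(\cdot,t_1)$, the two endpoints already available are $\|\phi_n\|_{L^2(Q)}\leq C|t_2-t_1|^{1/4}$ from \eqref{pre-pippa-final} and $\|\phi_n\|_{W^{2,p}_\#(Q)}\leq C$ from \eqref{ghnboundbis}. I would chain the two-dimensional Gagliardo-Nirenberg inequality $\|\phi_n\|_{L^\infty}\lesssim \|D\phi_n\|_{L^\infty}^{1/2}\|\phi_n\|_{L^2}^{1/2}$, the log-convexity $\|\phi_n\|_{L^p}\leq \|\phi_n\|_{L^\infty}^{1-2/p}\|\phi_n\|_{L^2}^{2/p}$, the further Gagliardo-Nirenberg $\|D\phi_n\|_{L^p}\lesssim \|D^2\phi_n\|_{L^p}^{1/2}\|\phi_n\|_{L^p}^{1/2}$, the fractional Sobolev interpolation $\|D\phi_n\|_{W^{s,p}}\lesssim \|D\phi_n\|_{L^p}^{1-s}\|D\phi_n\|_{W^{1,p}}^{s}$, and Morrey's embedding $W^{s,p}(Q)\hookrightarrow C^{0,s-2/p}(Q)$ (valid since $p>2$, with $s=\alpha+2/p\in(0,1)$). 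A careful bookkeeping of the exponents then produces
\begin{equation*}
\|h_n(\cdot,t_2)-h_n(\cdot,t_1)\|_{C^{1,\alpha}_\#(Q)}\leq C\,|t_2-t_1|^{\beta}
\end{equation*}
uniformly in $n$, for every $\alpha\in(0,(p-2)/p)$ and every $\beta<(p-2-\alpha p)(p+2)/(16p^2)$.

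Combining this equi-H\"older estimate with the $C^{0}([0,T];L^2(Q))$ convergence in \eqref{32} I would then prove (i) in two steps: (a) the same interpolation applied pointwise in $t$ to $h_n(\cdot,t)-h(\cdot,t)$, using \eqref{32} and the uniform $W^{2,p}$ bound \eqref{ghnboundbis}, gives $\sup_{t\in[0,T]}\|h_n(\cdot,t)-h(\cdot,t)\|_{C^{1,\alpha}_\#(Q)}\to 0$; (b) a standard balancing between the equi-H\"older control on small time intervals and the uniform convergence on the complementary intervals upgrades this to the topology $C^{0,\beta}([0,T_0];C^{1,\alpha}_\#(Q))$ for any $\beta$ strictly less than the exponent above. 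Evaluating the H\"older estimate at $t=0$ and invoking the strict inequalities $\|Dh_0\|_{L^\infty}<\Lambda_0$ from \eqref{Lambda0} and $h_0>0$ from \eqref{sa}, I would then choose $T_0>0$, depending only on $(h_0,u_0)$, so that \eqref{noconstraint} holds and $h_n(\cdot,t),\,h(\cdot,t)\geq C_0>0$ on $[0,T_0]$ for every $n$. Consequently the gradient constraint in \eqref{pin} is inactive on $[0,T_0]$ and $h(\cdot,t)\to h_0$ in $C^{1,\alpha}_\#(Q)$ as $t\to 0^+$.

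Items (ii) and (iii) then follow from regularity theory for the Lam\'e system $\Div(\C E(u))=0$ on $\Omega_h$ with Dirichlet data on the bottom and zero traction on $\Gamma_h$. Flattening the free boundary $\Gamma_{h_n(\cdot,t)}$ by the bi-Lipschitz change of variables $(x,y)\mapsto (x,y/h_n(x,t))$ transforms the system into a uniformly elliptic system on the fixed strip $Q\times(0,1)$ whose coefficients depend $C^{0,(p-2)/p}$-continuously on the $C^{1,(p-2)/p}$-norm of $h_n$, which is under control by (i) and by the uniform lower bound $h_n\geq C_0$. Schauder estimates up to the boundary then give (ii), and continuous dependence of the solution on these coefficients propagates the $C^{0,\beta}$-in-time modulus from (i) to $E(u_n(\cdot,h_n))$, yielding (iii).

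The main technical obstacle is the first paragraph: the exponent $(p-2-\alpha p)(p+2)/(16p^2)$ is forced by the quartic-root time modulus of \eqref{pre-pippa-final} and the interpolation chain has to be tuned carefully, with the restriction $p>2$ entering decisively through Morrey's embedding. Everything downstream---the choice of $T_0$, the removal of the gradient constraint, the positivity lower bound, and the elastic estimates (ii)--(iii)---rests on this equi-H\"older estimate in $C^{1,\alpha}_\#(Q)$.
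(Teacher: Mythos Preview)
Your proposal is correct and follows essentially the same strategy as the paper: interpolate between the $L^2$ time-H\"older estimate \eqref{pre-pippa-final} and the uniform $W^{2,p}$ bound \eqref{ghnboundbis} to get a $C^{1,\alpha}$ time-H\"older estimate, then invoke Ascoli--Arzel\`a for (i), and deduce (ii), (iii), \eqref{noconstraint}, and the positivity from elliptic (Schauder) regularity for the Lam\'e system together with the strict inequalities \eqref{Lambda0} and \eqref{sa} on the initial datum.

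The only noteworthy difference is the interpolation route in (i). The paper applies the Gagliardo--Nirenberg inequality of Theorem~\ref{th:D} twice to obtain directly
\[
\|Dh_n(\cdot,t_2)-Dh_n(\cdot,t_1)\|_{L^\infty}\leq C|t_2-t_1|^{\frac{p^2-4}{16p^2}},
\]
and then passes to the $C^{0,\alpha}$ seminorm of the gradient via the elementary H\"older interpolation
\[
[f]_\alpha\leq [f]_{\frac{p-2}{p}}^{\frac{\alpha p}{p-2}}\,\bigl(\operatorname*{osc} f\bigr)^{\frac{p-2-\alpha p}{p-2}},
\]
which avoids fractional Sobolev spaces altogether. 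Your chain through $W^{s,p}$ and Morrey is slightly longer but lands on the same exponent $(p-2-\alpha p)(p+2)/(16p^2)$; the paper's route is a bit more economical and keeps all the interpolation tools within the integer-order framework already set up in the Appendix.
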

\begin{proof}
To prove assertion (i), we start by observing that by Theorem~\ref{th:D}, 
\eqref{ghnboundbis},  Theorem~\ref{th:D} again, and  \eqref{pre-pippa-final} we have
\begin{align}
\Bigl\|{D h_n}(\cdot, t_2) -{D h_n}(\cdot, t_1) \Bigr\|_{L^\infty} & \leq 
C\Bigl\|{D^2 h_n} (\cdot, t_2)-{D^2 h_n} (\cdot, t_1)\Bigr\|_{L^p}^{\frac{p+2}{2p}}
\| h_n(\cdot, t_2)-h_n(\cdot, t_1)\Bigr\|_{L^p}^{\frac{p-2}{2p}}\nonumber\\
&\leq C \| h_n(\cdot, t_2)-h_n(\cdot, t_1)\Bigr\|_{L^p}^{\frac{p-2}{2p}}\nonumber\\
&\leq C \biggl(\|{D^2 h_n} (\cdot, t_2)-{D^2 h_n} (\cdot, t_1)\Bigr\|_{L^2}^{\frac{p-2}{2p}}
\| h_n(\cdot, t_2)-h_n(\cdot, t_1)\Bigr\|_{L^2}^{\frac{p+2}{2p}}\biggr)^{\frac{p-2}{2p}}\nonumber\\
&\leq C|t_2-t_1|^{\frac{p^2-4}{16p^2}} 
\label{44}
\end{align}
for all $t_1$, $t_2\in [0, T_0]$. Notice that from \eqref{ghnboundbis}  we have 
\beq\label{45}
\sup_{n, t\in [0, T_0]}\|h_n(\cdot, t)\|_{C^{1, \frac{p-2}p}_\#(Q)}<+\infty\,.
\eeq
Take $\alpha\in (0,\frac{p-2}p)$ and observe that
$$
\Bigl[Dh_n(\cdot, t_2)- Dh_n(\cdot, t_1)\Bigr]_\alpha\leq \Bigl[Dh_n(\cdot, t_2)-Dh_n(\cdot, t_1)\Bigr]_{\frac{p-2}{p}}^{\frac{\alpha p}{p-2}}
\biggl[\,\operatorname*{osc}_{[0,b]}\Bigl(D h_n(\cdot, t_2)-D h_n(\cdot, t_1)\Bigr)\biggr]^{\frac{p-2-\alpha p}{p-2}}\,,
$$
where $[\cdot]_\beta$ denotes the $\beta$-H\"older seminorm.
From this inequality,  \eqref{44},  \eqref{45}, and the Ascoli-Arzel\`a theorem  \cite[Proposition 3.3.1]{AGS},  assertion (i) follows. 

Standard elliptic estimates  ensure that if $h_n(\cdot, t)\in C^{1,\alpha}_\#(Q)$ for some $\alpha\in (0,1)$, then $Du_n(\cdot, t)$ can be estimated in $C^{0, \alpha}(\overline \Om_{h_n(\cdot, t)})$ with a constant depending only on the $C^{1, \alpha}$-norm of $h_n(\cdot, t)$, see, for instance, \cite[Proposition 8.9]{FM09}, where this property is proved in two dimensions but an entirely similar argument works in all dimensions. Hence, assertion (ii) follows from \eqref{45}.
Assertion (iii) is an immediate consequence of (i) and Lemma~\ref{lm:chepallequadrate}. Finally, \eqref{noconstraint} follows from \eqref{Lambda0} and (i).
\end{proof}
\begin{remark}\label{rm:elle0}
Note that in the previous theorem we can take 
$$
T_0:=\sup\{t>0:\, \|Dh_n(\cdot,s)\|_{L^{\infty}(Q)}<\Lambda_0 \quad\text{for all }s\in [0,t)\}\,.
$$
 In Theorem~\ref{th:existence} we will show that $h$ is a solution to \eqref{geq} in $[0,T_0)$, in the sense of Definition~\ref{def:weaksol}.
\end{remark}
We begin with some auxiliary results.
\begin{proposition}\label{prop:eulero}
Let $h\in W^{3,q}_{\#}(Q)$ for some $q>2$ and let $\Gamma$ be its graph. Let $\Phi:Q\times \R\times(-1,1)\to Q\times\R$ be the flow 
$$
\frac{\partial \Phi}{\partial t}=X(\Phi), \qquad \Phi(\cdot, 0)=Id\,, 
$$
where $X$ is a smooth vector field $Q$-periodic in the first two variables.
Set $\Gamma_t:=\Phi(\cdot, t)(\Gamma)$,  denote by $\nu_t$ the normal to $\Gamma_t$,   let $H_t$ be the sum of principal curvatures of $\Gamma_t$, and let $|B_t|^2$ be the sum of squares of the principal curvatures of $\Gamma_t$. Then
\begin{multline}\label{eq:eulero}
\frac{d}{dt}\frac 1p\int_{\Gamma_t}|H_t|^p\, d\H^2=
\int_{\Gamma_t}D_{\Gamma_t}(|H_t|^{p-2}H_t)D_{\Gamma_t}(X\cdot\nu_t)\, d\H^2\\
-\int_{\Gamma_t}|H_t|^{p-2}H_t\Bigl(|B_t|^2-\frac1pH_t^2\Bigr)(X\cdot \nu_t)\, d\H^2\,.
\end{multline}
\end{proposition}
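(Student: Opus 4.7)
The plan is to reduce \eqref{eq:eulero} to the case of a purely normal variation, invoke the classical first-variation formulas for the area element and the mean curvature, and then integrate by parts on the closed (because periodic) surface $\Gamma_t$. Note first that since $h\in W^{3,q}_\#(Q)$ with $q>2$, Sobolev embedding gives $h\in C^{2,\alpha}_\#(Q)$ for $\alpha=1-2/q$, so $\Gamma_t$ is of class $C^{2,\alpha}$ and $H_t\in W^{1,q}(\Gamma_t)$; moreover, since $p>2$ makes the map $z\mapsto|z|^{p-2}z$ of class $C^1(\R)$, the composition $|H_t|^{p-2}H_t$ also lies in $W^{1,q}(\Gamma_t)$, which is the regularity needed to justify the integration by parts below.

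Next decompose $X=V\nu_t+X^\top$ with $V:=X\cdot\nu_t$ and $X^\top$ tangent to $\Gamma_t$. The tangential component $X^\top$ generates only a one-parameter family of diffeomorphisms of $\Gamma_t$, hence does not move $\Gamma_t$ as a set and leaves both sides of \eqref{eq:eulero} unchanged. Therefore I may assume $X=V\nu_t$; in this case the classical first-variation formulas (obtained by pulling back the metric, the unit normal, and the second fundamental form to a fixed $C^{2,\alpha}$ reference parametrization and differentiating pointwise in $t$) read
\begin{equation*}
\frac{\pa}{\pa t}(d\H^2)=H_tV\,d\H^2,\qquad\frac{\pa H_t}{\pa t}=-\Delta_{\Gamma_t}V-|B_t|^2V.
\end{equation*}

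Differentiating under the integral sign and using $\pa_t|H_t|^p=p|H_t|^{p-2}H_t\,\pa_tH_t$ (justified because $p>2$), together with $|H_t|^pH_t=|H_t|^{p-2}H_t^3$, gives
\begin{align*}
\frac{d}{dt}\frac 1p\int_{\Gamma_t}|H_t|^p\,d\H^2
&=\int_{\Gamma_t}|H_t|^{p-2}H_t\,\frac{\pa H_t}{\pa t}\,d\H^2+\frac1p\int_{\Gamma_t}|H_t|^pH_tV\,d\H^2\\
&=-\int_{\Gamma_t}|H_t|^{p-2}H_t\Delta_{\Gamma_t}V\,d\H^2-\int_{\Gamma_t}|H_t|^{p-2}H_t|B_t|^2V\,d\H^2\\
&\quad+\frac1p\int_{\Gamma_t}|H_t|^{p-2}H_t^3V\,d\H^2.
\end{align*}
Since $\Gamma_t$ is boundaryless, integrating the first term by parts gives
\begin{equation*}
-\int_{\Gamma_t}|H_t|^{p-2}H_t\,\Delta_{\Gamma_t}V\,d\H^2=\int_{\Gamma_t}D_{\Gamma_t}(|H_t|^{p-2}H_t)D_{\Gamma_t}V\,d\H^2,
\end{equation*}
and the remaining two terms combine to $-\int_{\Gamma_t}|H_t|^{p-2}H_t\bigl(|B_t|^2-\tfrac1pH_t^2\bigr)V\,d\H^2$. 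Substituting $V=X\cdot\nu_t$ yields exactly \eqref{eq:eulero}.

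The only step that is not completely mechanical is the derivation of the pointwise formula for $\pa_tH_t$ at the present $C^{2,\alpha}$ regularity; this, however, follows by a direct computation once $\Gamma_t$ is parameterized over the fixed initial surface $\Gamma$ via $y\mapsto\Phi(y,t)$, since in such coordinates $H_t$ is a quasilinear expression in the first two spatial derivatives of $\Phi$ that depends smoothly on $t$ through $X$.
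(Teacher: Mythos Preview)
Your argument is correct: the reduction to normal variations via the geometric invariance of the functional $\Gamma\mapsto\frac1p\int_\Gamma|H|^p\,d\H^2$ is legitimate, the normal-variation identities $\partial_t(d\H^2)=H_tV\,d\H^2$ and $\partial_tH_t=-\Delta_{\Gamma_t}V-|B_t|^2V$ are classical, and the $Q$-periodicity of $\Gamma_t$ kills the boundary terms in the final integration by parts. Your remark that $|H_t|^{p-2}H_t\in W^{1,q}(\Gamma_t)$ (since $p>2$) is exactly what is needed to make that integration by parts rigorous.

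The paper proceeds differently: it does \emph{not} reduce to a normal variation but keeps the full vector field $X$ throughout. The area variation contributes $\frac1p\int_{\Gamma_t}|H_t|^p\Div_{\Gamma_t}X$, and the material derivative of $H_t$ is computed as $-\Delta_{\Gamma_t}(X\cdot\nu_t)+DH_t\cdot X$, using the identity $\dot\nu_t=-D_{\Gamma_t}(X\cdot\nu_t)$ from \cite{CMM}. The tangential part of $X$ is then eliminated \emph{a posteriori}: applying the tangential divergence theorem to the area term produces both a curvature term $\frac1p|H_t|^pH_t(X\cdot\nu_t)$ and a term $-|H_t|^{p-2}H_tD_{\Gamma_t}H_t\cdot X$; the latter combines with $|H_t|^{p-2}H_t\,DH_t\cdot X$ to leave only $|H_t|^{p-2}H_t\,\partial_{\nu_t}H_t\,(X\cdot\nu_t)$, and the ambient identity $\partial_{\nu_t}H_t=-|B_t|^2$ (again from \cite{CMM}, via the signed-distance extension of $\nu_t$) closes the computation. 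Your route is shorter and more elementary once the reparametrization-invariance principle is granted; the paper's route is more self-contained in that it dispatches the tangential component by explicit cancellation rather than by invoking that principle, at the price of importing two auxiliary identities from \cite{CMM}.
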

\begin{proof}
Set $\Phi_t(\cdot):=\Phi(\cdot, t)$. 
We can extend $\nu_t$ to a tubular neighborhood of $\Gamma_t$ as the gradient of the signed distance from $\Gamma_t$.
We have
$$
\frac{d}{dt}\frac 1p\int_{\Gamma_t}|H_t|^p\, d\H^2 = \frac{d}{ds}\biggl(\frac 1p\int_{\Gamma_{t+s}}|H_{t+s}|^p\, d\H^2\biggr)_{\bigr|_{s=0}}=\frac{d}{ds}\biggl(\frac 1p
\int_{\Gamma_{t}}|H_{t+s}\circ\Phi_{s}|^p J_2\Phi_{s}\, d\H^2\biggr)_{\bigr|_{s=0}}\,,
$$
where $J_2$ denotes the two-dimensional Jacobian of $\Phi_s$ on $\Gamma_t$. Then we have
$$
\frac{d}{dt}\frac 1p\int_{\Gamma_t}|H_t|^p\, d\H^2  =
\frac 1p\int_{\Gamma_t}|H_t|^p \Div_{\Gamma_t}X\, d\H^2+ 
\int_{\Gamma_t}|H_t|^{p-2}H_t\frac{d}{ds}\bigl(H_{t+s}\circ\Phi_s\bigr)_{\bigr|_{s=0}}\, d\H^2\,.
$$
Concerning the last integral, we observe that 
$$
\frac{d}{ds}\bigl(H_{t+s}\circ\Phi_s\bigr)_{\bigr|_{s=0}}=
\frac{d}{ds}\Bigl(\Div_{\Gamma_{t+s}}\nu_{t+s}\Bigr)_{\bigr|_{s=0}}+DH_{t}\cdot X\,.
$$
Set
$$
\dot{\nu_t}:=\frac{d}{ds}{\nu_{t+s}}_{\bigr|_{s=0}}\,.
$$
By differentiating with respect to $s$ the identity $D\nu_{t+s}[\nu_{t+s}]=0$, we get $D\dot{\nu_t}[\nu_t]+D\nu_t[\dot{\nu_t}]=0$. Multiplying this identity by $\nu_t$ and recalling that $D\nu$ is symmetric matrix we get 
$$
D\dot{\nu_t}[\nu_t]\cdot \nu_t=-D\nu_t[\nu_t]\cdot \dot{\nu_t}=0\,.
$$
In turn, this implies that $\Div_{\Gamma_t}\dot{\nu_t}=\Div \dot{\nu_t}$, and so
 $$
\frac{d}{ds}\Bigl(\Div_{\Gamma_{t+s}}\nu_{t+s}\Bigr)_{\bigr|_{s=0}}=\Div_{\Gamma_t}\dot{\nu_t}\,.
$$
In turn, see \cite[Lemma 3.8-(f)]{CMM},
$$
\dot{\nu_t}=-(D_{\Gamma_t}X)^T[\nu_t]-D_{\Gamma_t}\nu_t[X]=-D_{\Gamma_t}(X\cdot\nu_t)\,.
$$
Collecting the above identities, integrating by parts, and using the identity $\partial_{\nu_t}H_t=-\mathrm{trace}\bigl((D\nu_t)^2\bigr)=-|B_t|^2$ proved in \cite[Lemma 3.8-(d)]{CMM}, we have
\begin{align}
\frac{d}{dt}\frac 1p\int_{\Gamma_t}|H_t|^p\, d\H^2  &=\frac 1p\int_{\Gamma_t}|H_t|^p \Div_{\Gamma_t}X\, d\H^2+
\int_{\Gamma_t}|H_t|^{p-2}H_t\bigl(-\Delta_{\Gamma_t}(X\cdot\nu_t)+DH_{t}\cdot X\bigr)\, d\H^2\nonumber\\
& = -\int_{\Gamma_t}|H_t|^{p-2}H_tD_{\Gamma_t}H_t\cdot X\, d\H^2+
\frac 1p\int_{\Gamma_t}|H_t|^pH_t(X\cdot\nu_t)\, d\H^2\nonumber\\
&\quad+ \int_{\Gamma_t}|H_t|^{p-2}H_t\bigl(-\Delta_{\Gamma_t}(X\cdot\nu_t)+DH_{t}\cdot X\bigr)\, d\H^2\nonumber\\
&=  \int_{\Gamma_t}|H_t|^{p-2}H_t\Bigl(-\Delta_{\Gamma_t}(X\cdot\nu_t)
+\partial_{\nu_t}H_t(X\cdot\nu_t)+\frac{1}pH_t^2(X\cdot\nu_t)\Bigr)\, d\H^2\nonumber\\
&=\int_{\Gamma_t}D_{\Gamma_t}(|H_t|^{p-2}H_t)D_{\Gamma_t}(X\cdot\nu_t)\, d\H^2\nonumber\\
&\quad -\int_{\Gamma_t}|H_t|^{p-2}H_t\Bigl\{\Bigl(|B_t|^2-\frac1pH_t^2\Bigr)(X\cdot \nu_t)\Bigr\}\, d\H^2\,.
\end{align}
Thus \eqref{eq:eulero} follows. 
%To prove the general case, first observe that the assumption on $H$ implies that $H\in L^q(\Gamma)$ for all $q\geq 1$. Hence, $h\in W^{2,q}_\#(Q)$ for  all $q\geq 1$. 
%Therefore, we may approximate the function $h$ with a sequence $h_n$ of smooth functions such that $h_n\to h$ in $W^{2,q}_\#(Q)$ for  all $q\geq 1$.  Observing that corresponding right-hand sides in \eqref{eq:eulero} are equibounded with respect to $t$ and $n$ and converge pointwise in $t$ to the right-hand side corresponding to $h$,  which is continuous, the conclusion follows.
\end{proof}
Proposition~\ref{prop:eulero} motivates the following definition.
\begin{definition}\label{def:critical}
We say that $(h,u_h)\in X$ is a \emph{critical pair} for the functional $F$ defined in \eqref{EF} if $|H|^{p-2}H\in H^1(\Gamma_h)$ and 
\begin{multline*}
\e\int_{\Gamma_h}D_{\Gamma_h}(|H|^{p-2}H)D_{\Gamma_{h}}\phi\, d\H^2
+\e\int_{\Gamma_{h}}\Bigl(\frac1p|H|^pH-|H|^{p-2}H|B|^2\Bigr)\phi\, d\H^2\\
+\int_{\Gamma_{h}}\bigl[\Div_{\Gamma_{h}}(D\psi(\nu))
+W(E(u_{h}))\bigr]\phi\, d\H^2=0
\end{multline*}
for all $\phi\in H^1_\#(\Gamma_h)$ with $\int_{\Gamma_h}\phi\,d\mathcal{H}^2=0$.
We will also say that $h$ is a \emph{critical profile} if $(h, u_h)$ is a critical pair.
\end{definition}

\begin{lemma}\label{lm:tosto}
Let $h\in W^{2,p}_\#(Q)$ such that $|H|^{p-2}H\in W^{1,q}_\#(Q)$, for some $q>2$. Then, there exist a sequence 
$\{h_j\}\subset W^{3,q}_\#(Q)$ such that $h_j\to h$ in $W^{2,p}_\#(Q)$ and $|H_j|^{p-2}H_j\to |H|^{p-2}H$ in $W^{1,q}_\#(Q)$, where 
$H_j$ stands for the sum of the principal curvatures of $\Gamma_{h_j}$.
\end{lemma}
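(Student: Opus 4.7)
The natural first attempt is standard mollification: fix a smooth $Q$-periodic mollifier $\eta_\delta$, set $h_j:=h\ast\eta_{1/j}$, and observe that $h_j\in C^\infty_\#(Q)\subset W^{3,q}_\#(Q)$ and $h_j\to h$ in $W^{2,p}_\#(Q)$ by elementary properties of convolutions. Since $p>2$, the Sobolev embedding $W^{2,p}_\#(Q)\hookrightarrow C^{1,\alpha}_\#(Q)$ with $\alpha=(p-2)/p$ gives $Dh_j\to Dh$ uniformly, with $\|Dh_j\|_\infty$ bounded, so the quasilinear mean curvature operator $\mathcal{M}(Du,D^2u):=-\Div(Du/\sqrt{1+|Du|^2})$ remains uniformly elliptic along the sequence. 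Writing $H_j=\mathcal{M}(Dh_j,D^2h_j)$ and using $D^2 h_j\to D^2 h$ in $L^p$ together with the uniform convergence of $Dh_j$, one immediately gets $H_j\to H$ in $L^p(Q)$ and, up to a subsequence, a.e.

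For the $L^q$ convergence of $f_j:=|H_j|^{p-2}H_j$ toward $f:=|H|^{p-2}H$, I would exploit the extra regularity encoded in the hypothesis: since $q>2$, one has $W^{1,q}_\#(Q)\hookrightarrow C^{0,(q-2)/q}_\#(Q)$, so $f\in L^\infty$ and hence $H=\Psi^{-1}(f)\in L^\infty$, where $\Psi(s):=|s|^{p-2}s$. Combining the a.e.\ convergence of $H_j$ with a Vitali/equi-integrability argument (once one establishes boundedness of $\{H_j\}$ in a sufficiently strong space), one deduces $f_j\to f$ in $L^q(Q)$. The delicate point here already is that $\|H_j\|_\infty$ is not automatically uniformly controlled by mollification; one has to leverage the pointwise identity $|H_j|^{q(p-1)}=|f_j|^q$ and the hypothesis $f\in C^0$.

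The genuinely hard step is the $W^{1,q}$ convergence of the gradients. Since $h_j$ is smooth, the chain rule gives $Df_j=(p-1)|H_j|^{p-2}DH_j$ classically, and $DH_j$ involves $D^3 h_j$, which diverges in $L^p$ like $j$ as $j\to\infty$. On the other hand, $Df=(p-1)|H|^{p-2}DH$ exists in $L^q$ by assumption, encoding the crucial fact that the weight $|H|^{p-2}$ degenerates precisely where $H$ may fail to be in a Sobolev class. I would therefore write an explicit commutator-type decomposition
\[
Df_j-(Df)\ast\eta_{1/j}=(p-1)|H_j|^{p-2}DH_j-\bigl[(p-1)|H|^{p-2}DH\bigr]\ast\eta_{1/j},
\]
expand $DH_j$ in terms of $(Dh_j,D^2 h_j,D^3 h_j)$, and compare with the mollified weak identity using the smoothness of $\mathcal{M}$ together with standard commutator estimates for mollifiers applied to products.

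\textbf{Main obstacle.} The crux is that the nonlinear operator $h\mapsto|H|^{p-2}H$ does not commute with mollification, and $D^3 h_j$ has no uniform $L^p$ bound; the compensation must come from the degenerate weight $|H_j|^{p-2}$ near the zero set of $H$, controlled by the hypothesis $|H|^{p-2}H\in W^{1,q}_\#(Q)$. An alternative strategy, mollifying $f$ directly and recovering $h_j$ by solving $\mathcal{M}(Dh_j,D^2 h_j)=\Psi^{-1}(f_j)$ (after a small constant adjustment to preserve $\int_Q H_j\,dx=0$), sidesteps the commutator issue but runs into the dual obstacle that $\Psi^{-1}(s)=|s|^{1/(p-1)}\mathrm{sign}(s)$ is only Hölder, not Lipschitz, at $s=0$, so $\Psi^{-1}(f_j)\in W^{1,q}$ would require controlling the zero set of $f_j$. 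Either way, the technical heart of the proof is the interplay between the degenerate weight $|H|^{p-2}$ and the regularity of $f$.
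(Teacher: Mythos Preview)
Your plan correctly isolates the central difficulty and, in your alternative strategy, comes close to the paper's approach: the paper does indeed construct $h_j$ by solving the prescribed mean curvature equation $\mathcal{M}(h_j)=H_j$ for a suitable perturbation $H_j$ of $H$, invoking the inverse function theorem in $C^{2,\alpha}_\#(Q)$ (note that $q>2$ and the hypothesis already force $H\in C^{0,\alpha}_\#(Q)$ and hence $h\in C^{2,\alpha}_\#(Q)$ by Schauder). However, you do not resolve the obstacle you identify, and neither of your two routes, as stated, closes.

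The missing idea is this: instead of mollifying $f=|H|^{p-2}H$ (which, as you note, forces you to control the zero set of the mollified function) or mollifying $h$ (which loses control of $D^3h_j$), the paper \emph{truncates $H$ away from zero}. Precisely, set
\[
H_\delta:=\begin{cases} H-\delta & \text{on }\{H\ge\delta\},\\ H+\delta' & \text{on }\{H\le-\delta'\},\\ 0 & \text{otherwise},\end{cases}
\]
with $\delta'=\delta'(\delta)$ chosen so that $\int_Q H_\delta\,dx=0$ (possible by continuity, since $\int_Q H\,dx=0$; one also checks $\delta'\to0$ as $\delta\to0$). Two things happen. First, on $A_\delta:=\{H>\delta\}\cup\{H<-\delta'\}$ the weight $|H|^{p-2}$ is bounded \emph{below}, so the hypothesis $|H|^{p-2}H\in W^{1,q}$ together with the chain rule yields $H\in W^{1,q}(A_\delta)$, and hence $H_\delta\in W^{1,q}_\#(Q)$. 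This is exactly the regularity of the right-hand side needed to get $h_\delta\in W^{3,q}_\#(Q)$ by elliptic regularity, once the inverse function theorem delivers a $C^{2,\alpha}$ solution $h_\delta$ of $\mathcal{M}(h_\delta)=H_\delta$ close to $h$. Second, one checks directly that $D(|H_\delta|^{p-2}H_\delta)=(p-1)|H_\delta|^{p-2}DH$ on $A_\delta$ and vanishes elsewhere, with the pointwise domination $|D(|H_\delta|^{p-2}H_\delta)|\le|D(|H|^{p-2}H)|$; dominated convergence then gives $|H_\delta|^{p-2}H_\delta\to|H|^{p-2}H$ in $W^{1,q}_\#(Q)$.

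The truncation is the device that simultaneously kills the degeneracy at $\{H=0\}$ and preserves the $W^{1,q}$ information contained in the hypothesis; it is precisely the compensation mechanism you were searching for but did not find. Your mollification route for $h$ might in principle be made to work via commutator estimates, but it would be substantially more technical and you have not carried it out.
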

\begin{proof} We may assume without loss of generality that $H\neq 0$, otherwise $h$ would have already the required regularity (see \eqref{div eq}).
By the Sobolev embedding theorem it follows that $|H|^{p-2}H\in C^{0, 1-\frac2q}_\#(Q)$ and, in turn, using the $\frac1{p-1}$ H\"older's continuity of the function  $t\mapsto t^{\frac1{p-1}}$, $H\in C^{0, \alpha }_\#(Q)$ for $\alpha:=\frac{q-2}{q(p-1)}$. Standard Schauder's estimates yield $h\in C^{2, \alpha}_\#(Q)$.

For $\de>0$ set 
$$
H_\de:=
\begin{cases}
H-\de & \text{if $H\geq \de$,}\\
H+\de' & \text{if $H\leq-\de'$,}\\
0 & \text{otherwise ,}
\end{cases}
$$
where $\de'$ is chosen in such a way that $\int_Q H_\de\, dx=0$. Observe that this choice of $\de'$ is always possible, although not necessarily unique. Indeed, 
by \eqref{average 0} and the fact that $H\neq 0$, if $\de$ is sufficiently small
$$
\int_{\{H>\de\}}(H-\de)\, dx+\int_{\{H<0\}}H\, dx<0 \qquad\text{and}\qquad \int_{\{H>\de\}}(H-\de)\, dx>0\,.
$$
By continuity it is then clear that we may find $\de'>0$ such that 
\beq\label{zero}
\int_{\{H>\de\}}(H-\de)\, dx+\int_{\{H<-\de'\}}(H+\de')\, dx=0\,.
\eeq
We now show that, independently of the choice of $\de'$ satisfying \eqref{zero}, $\de'\to 0$ as $\de\to 0$. Indeed, if not, there would exist a sequence $\de_n\to 0$ and a corresponding sequence $\de'_n\to \de'>0$, such that \eqref{zero} holds with $\de$ and $\de'$ replaced by 
$\de_n$ and $\de'_n$, respectively. But then,  passing to the limit as $n\to\infty$, we would get
$$
\int_{\{H>0\}}H\, dx+\int_{\{H<-\de'\}}(H+\de')\, dx=0\,,
$$
which contradicts \eqref{average 0}.

Note that $H_\de\to H$ in $C^{0, \alpha }_\#(Q)$ as $\de\to 0$. Moreover,  we claim that $|H_\de|^{p-2}H_\de \to |H|^{p-2}H$ in $W^{1,q}_\#(Q)$. Indeed, observe that $H\in W^{1,q}(A_\de)$ where $A_\de:=\{H> \de\}\cup\{H<-\de'\}$  for all $\de>0$. Hence, 
$$
D(|H|^{p-2}H)=
\begin{cases}
(p-1)|H|^{p-2}DH & \text{if $H\neq 0$,}\\
0 & \text{elsewhere,}
\end{cases}
$$
and
$$
D(|H_\de|^{p-2}H_\de)=
\begin{cases}
(p-1)|H_\de|^{p-2}DH & \text{in  $A_\de$,}\\
0 & \text{elsewhere.}
\end{cases}
$$
The claim follows by observing that $D(|H_\de|^{p-2}H_\de)\to D(|H|^{p-2}H)$ a.e. and that 
$|D(|H_\de|^{p-2}H_\de)|\leq |D(|H|^{p-2}H)|$. Observe now that $H\in W^{1,q}(A_\de)$ implies $H_\de\in W^{1,q}_\#(Q)$. In order to conclude the proof it is enough to show that for $\de$ sufficiently small there exist a unique periodic solution $h_\de$ to the problem
\beq\label{tosto1}
\begin{cases}
\displaystyle-\Div\biggl(\frac{Dh_\de}{\sqrt{1+|Dh_\de|^2}}\biggr)=H_\de \vspace{10pt} \\
\vspace{10pt}
\displaystyle\int_Q h_\de\, dx =\int_Q h\, dx\,.
\end{cases}
\eeq
This follows from Lemma~\ref{lm:IFT} below. 
 \end{proof}
\begin{lemma}\label{lm:IFT}
Let $h\in C^{2,\alpha}_\#(Q)$ and let $H$ denote the sum of the principal curvatures of $\Gamma_{h}$. Then there exist $\sigma$, $C>0$ with the following property:   for all $K\in C^{0, \alpha}_\#(Q)$, with $\int_Q K\, dx=0$ and $\|K-H\|_{ C^{0, \alpha}_\#(Q)}\leq \sigma$, there exists a unique periodic solution $k\in C^{2,\alpha}_\#(Q)$ to 
$$
\begin{cases}
\displaystyle-\Div\biggl(\frac{Dk}{\sqrt{1+|Dk|^2}}\biggr)=K \vspace{10pt} \\
\vspace{10pt}
\displaystyle\int_Q k\, dx =\int_Q h\, dx\,,
\end{cases}
$$
and 
\beq\label{stimozza}
\|k-h\|_{C^{2,\alpha}_\#(Q)}\leq C\|K-H\|_{ C^{0, \alpha}_\#(Q)}\,.
\eeq
\end{lemma}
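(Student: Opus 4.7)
My plan is to apply the Banach-space Implicit Function Theorem at the base point $(0, H)$, viewing the prescribed-curvature equation as a perturbation of the one solved by $h$. Introduce the closed subspaces
$$
X := \Bigl\{v \in C^{2,\alpha}_\#(Q):\ \textstyle{\int_Q} v\, dx = 0\Bigr\}, \qquad
Y := \Bigl\{K \in C^{0,\alpha}_\#(Q):\ \textstyle{\int_Q} K\, dx = 0\Bigr\},
$$
and define $\Phi : X \times Y \to Y$ by
$$
\Phi(v, K) := -\Div\biggl(\frac{D(h+v)}{\sqrt{1+|D(h+v)|^2}}\biggr) - K.
$$
The divergence term has zero mean on the periodicity cell by \eqref{average 0}, so $\Phi$ does take values in $Y$, and $\Phi(0, H) = 0$ by definition of $H$. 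Moreover $\Phi$ is of class $C^\infty$ on a $C^{2,\alpha}$-neighborhood of $0$, since $p \mapsto p/\sqrt{1+|p|^2}$ is smooth and composition sends $D(h+v) \in C^{1,\alpha}$ to a $C^{1,\alpha}$ vector field whose divergence lies in $C^{0,\alpha}$.

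Differentiating in the first argument at $(0, H)$ yields the linear map $L : X \to Y$,
$$
L w = -\Div\bigl(A(x) Dw\bigr), \qquad A(x) := \frac{1}{(1+|Dh|^2)^{1/2}}\Bigl(I - \frac{Dh \otimes Dh}{1+|Dh|^2}\Bigr),
$$
with $A \in C^{0,\alpha}_\#(Q; \mathbb{M}^{2\times 2}_{\rm sym})$ uniformly positive definite (its eigenvalues are $(1+|Dh|^2)^{-3/2}$ along $Dh$ and $(1+|Dh|^2)^{-1/2}$ perpendicular to it). The key analytic step is to show that $L$ is a topological isomorphism: on the full space $C^{2,\alpha}_\#(Q)$, periodic Schauder theory gives that $L$ is Fredholm of index zero; divergence-form ellipticity together with the periodic Poincar\'e inequality forces $\ker L = \R$, and by formal $L^2$ self-adjointness the cokernel is also the constants. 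Restricting to the zero-mean subspaces $X$ and $Y$ therefore makes $L$ bijective with bounded inverse.

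The Implicit Function Theorem then produces $\sigma > 0$ and a $C^1$ map $K \mapsto v(K)$ from $\{K \in Y : \|K - H\|_{C^{0,\alpha}_\#(Q)} \leq \sigma\}$ into $X$, with $v(H) = 0$ and $\Phi(v(K), K) = 0$; setting $k := h + v(K)$ gives the desired solution, whose average matches that of $h$ by construction, and uniqueness holds in a $C^{2,\alpha}$-ball around $h$. The estimate \eqref{stimozza} follows from the Lipschitz dependence of $v$ on $K$, whose constant is $\|L^{-1}\|_{Y \to X}$ to leading order. The main technical hurdle is verifying the invertibility of $L$: although this is really the standard periodic Schauder/Fredholm package for uniformly elliptic divergence-form operators with H\"older-regular coefficients, one has to handle the zero-mean condition carefully to kill both kernel and cokernel at once. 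Once $L^{-1}$ is controlled, the remainder is an immediate application of the IFT.
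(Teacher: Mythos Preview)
Your proof is correct and essentially identical to the paper's: the paper also works on the zero-mean subspaces $X$ and $Y$, defines the prescribed-curvature operator, computes the same linearization $L$, and invokes ``standard existence arguments for elliptic equations'' to conclude it is invertible before applying the Inverse Function Theorem (you use the equivalent Implicit Function Theorem formulation). Your Fredholm/self-adjointness justification for the invertibility of $L$ is a bit more explicit than the paper's one-line appeal to standard elliptic theory, but the argument is the same.
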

\begin{proof}
Without loss of generality we may assume that $\int_{Q}h\, dx=0$.

 Set $X:=\{k\in C^{2,\alpha}_\#(Q):\, \int_Q k\, dx=0 \}$ and
$Y:=\{K\in C^{0,\alpha}_\#(Q):\, \int_Q K\, dx=0\}$, and consider the operator $T\colon X\to Y$ defined by
$$
T(k):=-\Div\biggl(\frac{Dk}{\sqrt{1+|Dk|^2}}\biggr)\,.
$$ 
By assumption we have that $T(h)=H$. We now use the inverse function theorem (see e.g. \cite[Theorem 1.2, Chap. 2]{AP}) to prove that 
$T$ is invertible in a $C^{2,\alpha}$-neighborhood of $h$ with a $C^1$-inverse. To see this, note that for any $k\in X$ we have that
$T'(k): X\to Y$ is the continuous linear operator defined by
$$
T'(h)[\vphi]:=-\Div\biggl[\frac{1}{\sqrt{1+|Dh|^2}}\bigg(I-\frac{Dh\otimes Dh}{1+|Dh|^2}\biggr)D\vphi \biggr]\,.
$$
It is easily checked that $T'$ is continuous map from $X$ to the space $\mathcal L(X,Y)$ of linear bounded operators from $X$ to $Y$, so that $T\in C^1(X,Y)$.
Finally, standard existence arguments for elliptic equations imply that for any $k\in X$ the operator $T'(k)$ is invertible. Thus we may apply the inverse function theorem to conclude that there exist $\sigma>0$ such that  for all $K\in C^{0, \alpha}_\#(Q)$, with $\int_Q K\, dx=0$ and $\|K-H\|_{ C^{0, \alpha}_\#(Q)}\leq \sigma$, there exists a unique periodic function $k=T^{-1}K\in C^{2,\alpha}_\#(Q)$. Moreover, the continuity of $T^{-1}$, together with standard Schauder's estimates, implies that \eqref{stimozza} holds for $\sigma$ sufficiently small.
\end{proof}
In what follows $J_{i,n}$ stands for
$$
 J_{i,n}:=\sqrt{1+|Dh_{i,n}|^2}\,,
$$
 $H_{i,n}$ is the sum of the principal curvatures of $\Gamma_{i,n}$, $|B_{i,n}|^2$ denotes the sum of the squares of the principal curvatures of $\Gamma_{i,n}$, and  
 $\tilde H_n\colon Q\times [0,T_0]\to\R$ is the function  defined as
%%%%%%%%%%%%%%%%change%%%%%%%%%%%%%%
{
 \begin{equation}\tilde H_n(x,t):=H_{i,n}(x, h_{i,n}(x), t)\qquad \text{ if $t\in [(i-1)\tau_n, i\tau_n)$}\,.
\label{H tilde}
\end{equation} 
}
%%%%%%%%%%%%%%%%%%%%%%%
 \begin{theorem}\label{th:5}
Let $T_0$ be as in Theorem~\ref{th:pippa} and let $\tilde H_n$ be given in \eqref{H tilde}. Then there exists $C>0$ such that 
\beq\label{51}
\int_0^{T_0}\int_Q|D^2(|\tilde H_n|^{p-2}\tilde H_n)|^2\, dxdt\leq C
\eeq
for $n\in \N$.
\end{theorem}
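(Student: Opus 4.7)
The natural strategy is to exploit the Euler--Lagrange equation satisfied by the discrete minimizers $(h_{i,n},u_{i,n})$ of the incremental problem \eqref{pin}, which identifies $\varepsilon\Delta_{\Gamma_{i,n}}(|H_{i,n}|^{p-2}H_{i,n})$ as a combination of terms whose $L^2(Q)$-norms are controllable in time. By \eqref{noconstraint}, on $[0,T_0]$ the $C^1$-constraint $\|Dh\|_\infty\le\Lambda_0$ is inactive, so only the volume constraint is active, producing a single Lagrange multiplier $\lambda_{i,n}$. After approximating $h_{i,n}$ via Lemma~\ref{lm:tosto} so that Proposition~\ref{prop:eulero} applies, perturbing $h_{i,n}$ by a mean-zero test $\varphi$ (and using that the normal velocity is $\varphi/J_{i,n}$ so that $d\mathcal H^2$ and $J_{i,n}^{-1}$ cancel) and computing the variation of the penalty $\frac{1}{2\tau_n}\int_{\Gamma_{i-1,n}}|D_{\Gamma_{i-1,n}}v_h|^2\,d\mathcal{H}^2$ to get $-\tau_n^{-1}v_{h_{i,n}}$, I obtain the EL identity
\begin{equation*}
\varepsilon\Delta_{\Gamma_{i,n}}(|H_{i,n}|^{p-2}H_{i,n})=\Div_{\Gamma_{i,n}}(D\psi(\nu_{i,n}))+W(E(u_{i,n}))-\varepsilon|H_{i,n}|^{p-2}H_{i,n}\Bigl(|B_{i,n}|^2-\tfrac{1}{p}H_{i,n}^2\Bigr)-\tfrac{v_{h_{i,n}}}{\tau_n}-\lambda_{i,n}
\end{equation*}
on $Q$ (after pullback via the graph).

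Next I would square this identity, integrate over $Q$, and sum $\tau_n\sum_i$ to obtain an integral over $[0,T_0]\times Q$. The crucial point is the penalty term: using the Poincar\'e inequality on $\Gamma_{i-1,n}$ (with constant depending only on $\Lambda_0$ thanks to \eqref{noconstraint}) and the energy-dissipation bound \eqref{pre-pippa0}, one has
\begin{equation*}
\sum_{i}\tau_n\Bigl\|\tfrac{v_{h_{i,n}}}{\tau_n}\Bigr\|^2_{L^2(Q)}\le C(\Lambda_0)\sum_i\frac{\|D_{\Gamma_{i-1,n}}v_{h_{i,n}}\|^2_{L^2(\Gamma_{i-1,n})}}{\tau_n}\le C\,F(h_0,u_0).
\end{equation*}
The remaining time-independent terms are bounded uniformly: $\Div_{\Gamma_{i,n}}(D\psi(\nu_{i,n}))$ lies in $L^p(Q)\subset L^2(Q)$ by \eqref{ghnboundbis}, $W(E(u_{i,n}))$ is uniformly bounded in $L^\infty(Q)$ by Theorem~\ref{th:pippa}(ii), and $\lambda_{i,n}$ is controlled by averaging the EL equation over $Q$ (combined with the previous bounds). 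Once $\Delta_{\Gamma_{i,n}}(|H_{i,n}|^{p-2}H_{i,n})$ is bounded in $L^2(Q\times[0,T_0])$, the conclusion follows from standard $L^2$-elliptic regularity applied to the uniformly elliptic operator $\Delta_{\Gamma_{i,n}}$ (whose coefficients are $C^{0,\alpha}$ in $x$, with norms controlled by $\Lambda_0$), together with the Poincar\'e-type control of $|H|^{p-2}H$ coming from \eqref{ghnboundbis}.

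The principal obstacle is the genuinely nonlinear term $\varepsilon|H_{i,n}|^{p-2}H_{i,n}(|B_{i,n}|^2-\tfrac1pH_{i,n}^2)$, which is not naively in $L^2(Q)$ under the sole bound $h_{i,n}\in W^{2,p}_\#(Q)$ (for $p\in(2,4)$ this is especially delicate). My plan to handle it is to avoid estimating it in $L^2$ directly, and instead to test the EL equation against $\Delta_{\Gamma_{i,n}}(|H_{i,n}|^{p-2}H_{i,n})$. Integrating by parts on the bad term once transfers a derivative onto $|H|^{p-2}H$, producing
\begin{equation*}
\int_{\Gamma_{i,n}}\bigl(|B_{i,n}|^2-\tfrac{1}{p}H_{i,n}^2\bigr)|D_{\Gamma_{i,n}}(|H_{i,n}|^{p-2}H_{i,n})|^2\,d\mathcal{H}^2
\end{equation*}
plus cross terms containing $D|B|^2$. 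The latter are treated by one further integration by parts and by two-dimensional Gagliardo--Nirenberg interpolation, exploiting the uniform $W^{2,p}_\#(Q)$-bound \eqref{ghnboundbis} to control $\|H_{i,n}\|_{L^q}$ and $\|B_{i,n}\|_{L^q}$ for $q<\infty$ in terms of $\||H|^{p-2}H\|_{H^1}$ and the already-available $L^p$-bounds. Using Young's inequality I can then absorb a small multiple of $\|\Delta_{\Gamma_{i,n}}(|H|^{p-2}H)\|^2_{L^2}$ back onto the LHS and close the estimate.
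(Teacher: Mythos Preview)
Your overall strategy for the quantitative estimate is close to the paper's Step~2, but there is a genuine gap at the outset: you have skipped the regularity bootstrap that constitutes Step~1 of the paper's proof, and without it the rest of your argument cannot be justified. A priori one only knows $h_{i,n}\in W^{2,p}_\#(Q)$, hence $H_{i,n}\in L^p(Q)$ and $w:=|H_{i,n}|^{p-2}H_{i,n}\in L^{p/(p-1)}(Q)$; since $p/(p-1)<2$, one does not even know $w\in L^2$, let alone $w\in H^1$ or $H^2$. Your appeal to Lemma~\ref{lm:tosto} is therefore circular: that lemma \emph{assumes} $|H|^{p-2}H\in W^{1,q}_\#(Q)$ for some $q>2$, which is precisely the conclusion of the missing bootstrap. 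Likewise, testing the Euler--Lagrange equation against $\Delta_{\Gamma_{i,n}}w$ presupposes $w\in H^2$, which is not available. The paper handles this by writing the Euler--Lagrange equation first in the non-intrinsic ``double-divergence'' form \eqref{mostro}--\eqref{babymonster}, to which Lemma~\ref{lm:weyl-fusco} applies; an iteration based on that lemma raises the integrability of $w$ step by step until $w\in W^{1,q}_\#$ for all $q$, and then Schauder theory gives $h_{i,n}\in C^{2,\sigma}_\#$. Only after this bootstrap is Lemma~\ref{lm:tosto} applicable, the intrinsic Euler--Lagrange identity \eqref{51bis} valid, and elliptic regularity for $w$ available.

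Concerning the nonlinear term $\varepsilon\,|H|^{p-2}H\bigl(|B|^2-\tfrac1pH^2\bigr)$: your plan to integrate by parts once and then again to move derivatives off $|B|^2$ is problematic, because even after the bootstrap one has only $h_{i,n}\in C^{2,\sigma}_\#$, so $D|B|^2$ (a third derivative of $h$) is not defined. The paper avoids this entirely: testing \eqref{schifoimmane} with $\varphi=D_s\eta$ and $\eta=D_swJ_{i,n}$ produces the cross term $\int_Q w\,D_s(D_swJ_{i,n})\bigl(|B_{i,n}|^2-\tfrac1pH_{i,n}^2\bigr)\,dx$, which is bounded \emph{directly} by Cauchy--Schwarz and Young, yielding the contribution $\int_Q|H_{i,n}|^{2p-2}|D^2h_{i,n}|^4\,dx$ in \eqref{52}; no derivative ever lands on $|B|^2$. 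After that, the closing step is exactly the Gagliardo--Nirenberg interpolation you describe (using Theorem~\ref{th:D} and the uniform bound $\|w\|_{p/(p-1)}=\|H_{i,n}\|_p^{p-1}\le C$), absorbing a sublinear power of $\|D^2w\|_2$ and arriving at \eqref{1000}. So once the bootstrap is in place and you estimate the bad term by Cauchy--Schwarz rather than by integration by parts, your scheme coincides with the paper's.
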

\begin{proof}
{\bf Step 1.} We claim that $|H_{i,n}|^{p-2}H_{i,n}\in W^{1, q}_\#(\Gamma_{i,n})$ for all $q\geq 1$ and that $h_{i,n}\in C^{2, \sigma}_\#(Q)$ for all $\sigma\in (0, \frac1{p-1})$.

We recall that $h_{i,n}$ is the solution to the incremental minimum problem \eqref{pin}.
We are going to show that $h_{i,n}\in W^{2,q}_{\#}(Q)$ for all $q\geq 2$. Fix a function $\vphi\in C^2_\#(Q)$ such that $\int_Q\vphi\, dx=0$. Then by minimality  and by \eqref{noconstraint} we have
$$
\frac{d}{ds}\biggl(F(h_{i,n}+s\vphi, u_{i,n})+\frac{1}{2\tau_n}\int_{\Gamma_{i-1,n}}|D_{\Gamma_{i-1,n}}v_{h_{i,n}+s\vphi}|^2\,d\H^{2}\biggr)_{\bigr|_{s=0}}=0\,,
$$
where, we recall, $v_{h_{i,n}+s\vphi}$ solves \eqref{vacca} with $h$ replaced by 
$h_{i,n}+s\vphi$. 
It can be shown that
\begin{align}
&\int_QW(E(u_{i,n}(x, h_{i,n}(x))))\vphi\, dx+\int_QD\psi(-Dh_{i,n},1)\cdot(-D\vphi, 0)\, dx+\frac{\e}p
\int_Q|H_{i,n}|^p\frac{Dh_{i,n}\cdot D\vphi}{J_{i,n}}\nonumber\\
&\qquad-\e \int_{Q}|H_{i,n}|^{p-2}H_{i,n}\biggl[\Delta\vphi-
\frac{D^2\vphi[Dh_{i,n},Dh_{i,n}]}{J^2_{i,n}}\nonumber\\
&\qquad\qquad-\frac{\Delta h_{i,n}Dh_{i,n}\cdot D\vphi}{J^2_{i,n}}-2\frac{D^2h_{i,n}[Dh_{i,n}, D\vphi]}{J^2_{i,n}}+3
\frac{D^2h_{i,n}[Dh_{i,n}, Dh_{i,n}]Dh_{i,n}\cdot D\vphi}{J^4_{i,n}}\biggr]\, dx\nonumber\\
&\qquad -\frac{1}{\tau_n}\int_Qv_{h_{i, n}}\vphi\, dx=0\,,\label{mostro} 
\end{align}
where the last integral is obtained by observing that $v_{h_{i,n}+s\vphi}=v_{h_{i,n}}+s v_{\vphi}$, with $v_{\vphi}$ solving
$$
\begin{cases}
\displaystyle\Delta_{\Gamma_{i-1,n}}v_{\vphi}=\frac{\vphi}{\sqrt{1+|Dh_{i-1,n} |^2}}\circ\pi\,,\vspace{7pt}\\
\displaystyle\int_{\Gamma_{h_{i-1,n}}} v_\vphi\, d\H^{2}=0\,.
\end{cases}
$$
Setting $w:=|H_{i,n}|^{p-2}H_{i,n}$, 
\begin{align}
A & :=\e\biggl(I-\frac{Dh_{i,n}\otimes Dh_{i,n}}{J^2_{i,n}}\biggr)\,, \label{eq:A}\\
b & :=\pi( D\psi(-Dh_{i,n},1))-\frac{\e}p
|H_{i,n}|^p\frac{Dh_{i,n}}{J_{i,n}}\nonumber\\
&\qquad+\e{w}\biggl[
-\frac{\Delta h_{i,n}Dh_{i,n}}{J^2_{i,n}}-2\frac{D^2h_{i,n}[Dh_{i,n}]}{J^2_{i,n}}+3
\frac{D^2h_{i,n}[Dh_{i,n}, Dh_{i,n}]Dh_{i,n}}{J^4_{i,n}}\biggr]\,,\nonumber\\
& c:=-W(E(u(x, h_{i,n}(x))))+\frac{1}{\tau_n}v_{h_{i, n}}\,, \nonumber
\end{align}
we have by \eqref{ghnboundbis}  and Theorem~\ref{th:pippa} that $A\in W^{1,p}_\#(Q; \mathbb{M}^{2\times 2}_{\rm sym})$, 
$b\in L^1(Q; \R^2)$, $c\in C_\#^{0, \alpha}(Q)$ for some $\alpha$, and we may rewrite \eqref{mostro} as
\beq\label{babymonster}
\int_Q w\,AD^2\vphi\, dx+\int_Qb\cdot D\vphi+\int_Q c\vphi\, dx=0\qquad\text{for all $\vphi\in C^\infty_{\#}(Q)$ with $\int_Q\vphi\, dx=0$.}
\eeq
By Lemma~\ref{lm:weyl-fusco} we get that $w\in L^q(Q)$ for $q\in (\frac{p}{p-1},2)$. Therefore, for any such $q$ we have 
$H_{i,n}\in L^{q(p-1)}(Q)$ and thus, by Lemma~\ref{lm:morini}, $h_{i,n}\in W^{2, q(p-1)}_\#(Q)$. In turn, using H\"older's inequality, this implies that $b$, $w\, \Div A\in L^{r_0}(Q;\R^2)$ where $r_0:=\frac{q(p-1)}{p}$. Observe that $r_0\in (1, 2)$. By applying Lemma~\ref{lm:weyl-fusco} again, we deduce that
$w\in W^{1, r_0}_{\#}(Q)$ and thus $w\in L^{\frac{2r_0}{2-r_0}}(Q)$. In turn, arguing as before, this implies that 
 $b$, $w\, \Div A\in L^{r_1}(Q;\R^2)$, where $r_1:=\frac{2r_0(p-1)}{(2-r_0)p}>r_0$.
If $r_1\geq 2$, then using again Lemma~\ref{lm:weyl-fusco}  we conclude that 
$w\in W^{1, r_1}_{\#}(Q)$, which  implies the claim, since $D^2h_{i,n}\in L^q(Q; \mathbb{M}^{2\times2}_{sym})$ and, in turn, $b$, $w\,\Div A\in L^q(Q;\R^2)$ for all $q$. Then the conclusion follows by Lemma~\ref{lm:weyl-fusco}. Otherwise, we proceed by induction. Assume that $w\in W^{1, r_{i-1}}_{\#}(Q)$. If $r_{i-1}\geq 2$ then the claim follows. Otherwise, a further application of Lemma~\ref{lm:weyl-fusco} implies that  $w\in W^{1, r_i}_{\#}(Q)$ with
$r_i:=\frac{2r_{i-1}(p-1)}{(2-r_{i-1})p}$. Since $r_{i-1}<2$, we have $r_{i}>r_{i-1}$. We claim that there exists $j$ such that $r_j>2$. Indeed, if not, the increasing sequence $\{r_i\}$ would converge to some $\ell\in (1, 2]$ satisfying 
$$
\ell=\frac{2\ell(p-1)}{(2-\ell)p}\,.
$$
However, this is impossible since the above identity is equivalent to $\ell=\frac2p<1$.

Finally, observe that since $|H_{i,n}|^{p-2}H_{i,n}\in W^{1,q}_\#(Q)$ for all $q\geq 1$, then $|H_{i,n}|^{p-1}\in C^{0,\alpha}_\#(Q)$ for every $\alpha\in (0,1)$. Hence $H_{i,n}\in C^{0,\sigma}_\#(Q)$ for all $\sigma \in (0, \frac{1}{p-1})$ and so, by  standard Schauder's estimates, 
$h_{i,n}\in C^{2,\sigma}_\#(Q)$ for all $\sigma \in (0, \frac{1}{p-1})$.

\noindent{\bf Step 2.} By Step 1 we may now write the Euler-Lagrange equation for $h_{i,n}$ in intrinsic form. To be precise, we claim that for all $\vphi\in C^2_\#(Q)$, with $\int_Q\vphi\, dx=0$, we have 
\begin{align}\label{51bis}
&\e\int_{\Gamma_{i,n}}D_{\Gamma_{i,n}}(|H_{i,n}|^{p-2}H_{i,n})D_{\Gamma_{i,n}}\phi\, d\H^2
-\e\int_{\Gamma_{i,n}}|H_{i,n}|^{p-2}H_{i,n}\Bigl(|B_{i,n}|^2-\frac1pH_{i,n}^2\Bigr)\phi\, d\H^2\nonumber\\
&+\int_{\Gamma_{i,n}}\bigl[\Div_{\Gamma_{i,n}}(D\psi(\nu_{i,n}))
+W(E(u_{i,n}))\bigr]\phi\, d\H^2
-\frac{1}{\tau_n}\int_{\Gamma_{i,n}}v_{h_{i,n}}\phi\, d\H^2=0\,,
\end{align}
where $\phi:=\frac{\vphi}{J_{i,n}}\circ\pi$. To see this, 
fix $h\in W^{3,q}_\#(Q)$ for some $q>2$, denote by $\Gamma$ and  $\Gamma_t$ the graphs of $h$ and $h+t\vphi$, respectively,   and by $H$ and $H_t$ the corresponding sums of the principal curvatures. Then by Proposition~\ref{prop:eulero} and arguing as in the proof of \eqref{mostro}, we have
\begin{align*}
\int_{\Gamma}D_{\Gamma}(|H|^{p-2}H)D_{\Gamma}\phi\, d\H^2
&-\int_{\Gamma}|H|^{p-2}H\Bigl(|B|^2-\frac1pH^2\Bigr)\phi\, d\H^2\\
&=\frac{1}p
\int_Q|H|^p\frac{Dh\cdot D\vphi}{J}- \int_{Q}|H|^{p-2}H\biggl[\Delta\vphi-
\frac{D^2\vphi[Dh,Dh]}{J^2}\nonumber\\
&\qquad-\frac{\Delta hDh\cdot D\vphi}{J^2}-2\frac{D^2h[Dh, D\vphi]}{J^2}+3
\frac{D^2h[Dh, Dh]Dh\cdot D\vphi}{J^4}\biggr]\, dx\,,
\end{align*}
where $\phi$ stands for $\frac{\vphi}{J}\circ\pi$ and $J:=\sqrt{1+|Dh|^2}$. By the approximation Lemma~\ref{lm:tosto}, this identity still holds if $h\in C^{2,\alpha}_\#(Q)$ and thus \eqref{51bis} follows from \eqref{mostro}, recalling that by  Step 1,  $h_{i,n}\in C^{2,\sigma}_\#(Q)$ for some $\sigma>0$.

 In order to show \eqref{51}, observe that Lemma~\ref{lm:morini}, together with the  bound $\|Dh_{i,n}\|_{L^\infty}\leq \Lambda_0$, implies  that 
\beq\label{jy}
\|D^2h_{i,n}\|_{L^q(Q)}\leq C(q,\Lambda_0)\|H_{i,n}\|_{L^q(Q)}\,.
\eeq
Moreover, since $\Gamma_{i,n}$ is of class $C^{2, \sigma}$, equation \eqref{51bis} yields that $|H_{i,n}|^{p-2}H_{i,n}\in H^2(\Gamma_{i,n})$, and in turn  $|H_{i,n}|^{p-2}H_{i,n}\in H^2(Q)$ (see Remark \ref{notation}).

As before, setting $w:=|H_{i,n}|^{p-2}H_{i,n}$, by approximation we may rewrite \eqref{51bis} as
\begin{align}
&\int_QA(x)DwD\Bigl(\frac{\vphi}{J_{i,n}}\Bigr) J_{i,n}\, dx-\e\int_Qw\vphi\Bigl(|B_{i,n}|^2-\frac1pH_{i,n}^2\Bigr)\, dx\nonumber\\
&+\int_{Q}\bigl[\Div_{\Gamma_{i,n}}(D\psi(\nu_{i,n}))
+W(E(u_{i,n}))\bigr]\vphi\, dx
-\frac{1}{\tau_n}\int_{Q}v_{h_{i,n}}\vphi\, dx=0\,,\label{schifoimmane}
\end{align}
for all $\vphi\in H^1_\#(Q)$, with $\int_Q\vphi\, dx=0$,
where $A$, defined as in \eqref{eq:A}, is an elliptic matrix with ellipticity constants depending only on $\Lambda_0$. Recall that $w\in H^2(Q)$. We now choose $\vphi=D_s\eta$, with $\eta\in H^2_\#(Q)$, and observe that  integrating by parts twice yields 
\begin{align*}
\int_QADwD\Bigl(\frac{D_s\eta}{J_{i,n}}\Bigr)J_{i,n}\, dx & = 
-\int_QAD(D_sw)D\Bigl(\frac{\eta}{J_{i,n}}\Bigr)J_{i,n}\, dx-\int_QD_s(AJ_{i,n})DwD\Bigl(\frac{\eta}{J_{i,n}}\Bigr)\, dx\\
&\quad+\int_QADwD\Bigl(\frac{\eta D_sJ_{i,n}}{J^2_{i,n}}\Bigr)J_{i,n}\, dx\\
&=-\int_QAD(D_sw)D\Bigl(\frac{\eta}{J_{i,n}}\Bigr)J_{i,n}\, dx-\int_QD_s(AJ_{i,n})DwD\Bigl(\frac{\eta}{J_{i,n}}\Bigr)\, dx\\
&\quad-\int_QAD^2w\frac{\eta D_sJ_{i,n}}{J_{i,n}}\, dx-\int_QD(AJ_{i,n})Dw\frac{\eta D_sJ_{i,n}}{J^2_{i,n}}\, dx\,.
\end{align*}

Therefore, recalling \eqref{schifoimmane}, and by density we may conclude that for every $\eta\in H^1_\#(Q)$
\begin{align*}
\int_QAD(D_sw)D\Bigl(\frac{\eta}{J_{i,n}}\Bigr)J_{i,n}\, dx& = -\int_QD_s(AJ_{i,n})DwD\Bigl(\frac{\eta}{J_{i,n}}\Bigr)\, dx\\
&\quad-\int_QAD^2w\frac{\eta D_sJ_{i,n}}{J_{i,n}}\, dx-\int_QD(AJ_{i,n})Dw\frac{\eta D_sJ_{i,n}}{J^2_{i,n}}\, dx \\
&\quad -\e\int_QwD_s\eta\Bigl(|B_{i,n}|^2-\frac1pH_{i,n}^2\Bigr)\, dx\\
&\quad+\int_{Q}\bigl[\Div_{\Gamma_{i,n}}(D\psi(\nu_{i,n}))
+W(E(u_{i,n}))\bigr]D_s\eta \, dx
-\frac{1}{\tau_n}\int_{Q}v_{h_{i,n}}D_s\eta\, dx\,.
\end{align*}
Finally, choosing $\eta=D_sw J_{i,n}$, we obtain
\begin{align*}
\int_QAD(D_sw)D(D_sw)J_{i,n}\, dx& = -\int_QD_s(AJ_{i,n})DwD(D_sw)\, dx\\
&\quad-\int_QAD^2wD_sw D_sJ_{i,n}\, dx-\int_QD(AJ_{i,n})Dw\frac{D_sw D_sJ_{i,n}}{J_{i,n}}\, dx \\
&\quad -\e\int_QwD_s(D_sw J_{i,n})\Bigl(|B_{i,n}|^2-\frac1pH_{i,n}^2\Bigr)\, dx\\
&\quad+\int_{Q}\bigl[\Div_{\Gamma_{i,n}}(D\psi(\nu_{i,n}))
+W(E(u_{i,n}))\bigr]D_s(D_sw J_{i,n}) \, dx\\
&\quad-\frac{1}{\tau_n}\int_{Q}v_{h_{i,n}}D_s(D_sw J_{i,n}), dx\,.
\end{align*}

Summing the resulting equations for $s=1,2$, estimating $D(AJ_{i,n})$ by  $D^2h_{i,n}$, and using several times  Young's Inequality, 
we deduce
\begin{multline}\label{52}
\int_Q |D^2w|^2\, dx\leq C\int_Q\Bigl(|Dw|^2|D^2h_{i,n}|^2\, dx+|H_{i,n}|^{2p+2}\\
+|H_{i,n}|^{2p-2}|D^2h_{i,n}|^4+\frac{v^2_{i,n}}{(\tau_n)^2}+1\Bigr)\, dx
\end{multline}
for some constant $C$  depending only on $\Lambda_0$, $D^2\psi$, and on the $C^{1,\alpha}$ bounds on $u_{i,n}$ provided by Theorem~\ref{th:pippa}. Note that by Young's Inequality and \eqref{jy}, we have
$$
\int_Q|H_{i,n}|^{2p-2}|D^2h_{i,n}|^4\, dx\leq 
C\int_Q\bigl(|H_{i,n}|^{2p+2}+|D^2h_{i,n}|^{2p+2}\bigr)\, dx
\leq C\int_Q|H_{i,n}|^{2p+2}\, dx\,.
$$
Combining the last estimate with \eqref{52}, we therefore have
 \beq\label{53}
\int_Q|D^2w|^2\,dx\leq C_0\int_Q\Bigl(|D^2h_{i,n}|^{2}|Dw|^2+|w|^{\frac{2(p+1)}{p-1}}
+\frac{v^2_{i,n}}{(\tau_n)^2}+1\Bigr)\, dx\,.
\eeq
To deal with the first term on the right-hand side,  we use H\"older's inequality,  \eqref{jy}  and Theorem~\ref{th:D} twice to get
\begin{align*}
C_0\int_Q|D^2h_{i,n}|^{2}|Dw|^2\, dx &\leq C_0 \biggl(\int_Q|D^2h|^{2(p-1)}\, dx\biggr)^{\frac{1}{p-1}} \biggl(\int_Q|Dw|^{\frac{2(p-1)}{p-2}}\, dx\biggr)^{\frac{p-2}{p-1}}\\
&\leq C\|w\|_{2}^{\frac{2}{p-1}}\|Dw\|_{\frac{2(p-1)}{p-2}}^{2}
\leq 
C\|w\|_{2}^{\frac{2}{p-1}}\Bigl(\|D^2w\|_2^{\frac{p}{2(p-1)}}\|w\|_2^{\frac{p-2}{2(p-1)}}\Bigr)^{2}\\
&=C\|D^2w\|_{2}^{\frac{p}{p-1}}\|w\|_2^{\frac{p}{p-1}}\leq 
C\|D^2w\|_{2}^{\frac{p}{p-1}}\Bigl(\|D^2w\|_{\frac{p}{p-1}}^{\frac{p-2}{2p}}\|w\|^{\frac{p+2}{2p}}_{\frac{p}{p-1}}\Bigr)^{\frac{p}{p-1}}\\
&\leq C\|D^2w\|_{2}^{\frac{3p-2}{2(p-1)}}\|w\|_{\frac{p}{p-1}}^{\frac{p+2}{2(p-1)}}\leq
\frac{1}{4}\|D^2w\|_{2}^2+C\,,
\end{align*}
where in the last inequality we used the fact that $\frac{3p-2}{2(p-1)}<2$ and that 
$\|w\|_{\frac{p}{p-1}}=\|H_{i,n}\|_{p}^{p-1}$ is uniformly bounded with respect to $i$, $n$.
Using again Theorem~\ref{th:D}, we also have
$$
C_0\int_{Q}|w|^{\frac{2(p+1)}{p-1}}\, dx\leq C\|D^2w\|_{\frac{p}{p-1}}^{\frac{p+2}{p}}\|w\|_{\frac{p}{p-1}}^{\frac{p^2+p+2}{p(p-1)}}\leq \frac{1}{4}\|D^2w\|_{2}^2+C\,,
$$
where as before we used the fact that $\frac{p+2}{p}<2$ and $\|w\|_{\frac{p}{p-1}}$ is uniformly bounded. Inserting the two estimates above in \eqref{53}, we then get
\begin{equation}\label{1000}
\int_Q|D^2w|^2\,dx\leq C\int_Q\Bigl(1
+\frac{v^2_{i,n}}{(\tau_n)^2}\Bigr)\, dx\,.
\end{equation}
Integrating the last inequality with respect to time and using \eqref{pre-pippa0} we conclude the proof of the theorem.
\end{proof}
\begin{remark}\label{rm:5}
The same argument used in Step 1 of the proof of Theorem~\ref{th:5} and in the proof of \eqref{51bis} shows that if $(h,u_h)\in X$ satisfies 
\begin{align*}
&\int_QW(E(u_{h}(x, h(x))))\vphi\, dx+\int_QD\psi(-Dh,1)\cdot(-D\vphi, 0)\, dx+\frac{\e}p
\int_Q|H|^p\frac{Dh\cdot D\vphi}{J}\nonumber\\
&\qquad-\e \int_{Q}|H|^{p-2}H\biggl[\Delta\vphi-
\frac{D^2\vphi[Dh,Dh]}{J^2}\nonumber\\
&\qquad\qquad-\frac{\Delta hDh\cdot D\vphi}{J^2}-2\frac{D^2h[Dh, D\vphi]}{J^2}+3
\frac{D^2h[Dh, Dh]Dh\cdot D\vphi}{J^4}\biggr]\, dx=0
\end{align*}
for all $\vphi\in C^2_\#(Q)$ such that $\int_Q\vphi\, dx=0$, then $(h,u_h)$ is a critical pair for the functional $F$.
\end{remark}
\begin{lemma}\label{th:contazzi}
Let $T_0$ and $\tilde H_n$  be as in Theorem~\ref{th:pippa}. Then $|\tH_n|^p$ is a Cauchy sequence in $L^1(0, T_0; L^{1}(Q))$.
Moreover, $|\tH_n|^{p-2}\tH_n$ is a Cauchy sequence in $L^1(0, T_0; L^{2}(Q))$.
\end{lemma}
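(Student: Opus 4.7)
Set $w_n:=|\tilde H_n|^{p-2}\tilde H_n$ and $p':=p/(p-1)$. The plan is to reduce both claims to the assertion that $\{w_n\}$ is Cauchy in $L^1(Q\times[0,T_0])$, and then to prove this via Vitali's theorem.

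To reduce the first claim to the second, note that $|\tilde H_n|^p=|w_n|^{p'}$. The elementary inequality $\bigl||a|^{p'}-|b|^{p'}\bigr|\le C(|a|+|b|)^{p'-1}|a-b|$, together with H\"older's inequality and the uniform bound $\sup_{n,t}\int_Q|\tilde H_n|^p\,dx<+\infty$ (consequence of \eqref{31.5}), yields
\[
\bigl\|\,|\tilde H_n|^p-|\tilde H_m|^p\,\bigr\|_{L^1(Q)}\le C\|w_n-w_m\|_{L^{p'}(Q)}\le C\|w_n-w_m\|_{L^2(Q)}\,,
\]
where the last inequality holds because $p'<2$ and $Q$ is bounded. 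For the second claim, I combine the interpolation $\|f\|_{L^2(Q)}^2\le\|f\|_{L^1(Q)}\|f\|_{L^\infty(Q)}$ with the Sobolev embedding $H^2(Q)\hookrightarrow L^\infty(Q)$ (valid in dimension two) to get
\[
\|w_n-w_m\|_{L^2(Q)}\le C\|w_n-w_m\|_{H^2(Q)}^{1/2}\|w_n-w_m\|_{L^1(Q)}^{1/2}\,.
\]
Integrating in time, applying Cauchy--Schwarz, and invoking Theorem~\ref{th:5} (which bounds $\int_0^{T_0}\|w_n-w_m\|_{H^2(Q)}^2\,dt$ uniformly), the second claim reduces to proving that $\int_0^{T_0}\|w_n-w_m\|_{L^1(Q)}\,dt\to 0$.

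To show that $\{w_n\}$ is Cauchy in $L^1(Q\times[0,T_0])$, I would apply Vitali's theorem. Equi-integrability is immediate from the uniform $L^{p'}(Q\times[0,T_0])$-bound on $w_n$. For convergence in measure, I would first extend Theorem~\ref{th:pippa}(i) to the piecewise-constant interpolants $\bar h_n(\cdot,t):=h_{i,n}$ on $[(i-1)\tau_n,i\tau_n)$, using the fact that $\|\bar h_n-h_n\|_{L^\infty(0,T_0;C^{1,\alpha}_\#(Q))}=O(\tau_n^\beta)$ (which follows from \eqref{44}), to obtain $\bar h_n\to h$ in $L^\infty(0,T_0;C^{1,\alpha}_\#(Q))$. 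Together with the uniform bound \eqref{ghnboundbis}, this gives $\tilde H_n(\cdot,t)\rightharpoonup H(\cdot,t)$ weakly in $L^p(Q)$ for every $t\in[0,T_0]$, where $H$ denotes the sum of principal curvatures of $\Gamma_{h(\cdot,t)}$. To upgrade to pointwise a.e.\ convergence, I would invoke the additional spatial regularity $w_n\in L^2(0,T_0;H^2(Q))$ from Theorem~\ref{th:5} and the strict monotonicity of $a\mapsto|a|^{p-2}a$ via a Minty-type argument: any weak cluster point of $w_n$ in $L^2(0,T_0;H^2(Q))$ can be identified with $|H|^{p-2}H$, and by uniqueness the whole sequence then converges a.e.

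The main obstacle is this last identification. Since $w_n$ is only piecewise constant in time, classical Aubin--Lions compactness does not apply directly, and the spatial $H^2$-bound alone does not yield strong space-time compactness. The crux is to couple the monotonicity of the $(p-1)$-th power map with the pointwise-in-$t$ weak convergence of $\tilde H_n$ (coming from the strong convergence of $\bar h_n$ in $C^{1,\alpha}$) in order to promote weak convergence of $w_n$ to pointwise a.e.\ convergence along a subsequence, which suffices since the limit is unique.
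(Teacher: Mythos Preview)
Your reduction steps are correct and more economical than the paper's ordering: showing that $\{w_n\}$ Cauchy in $L^1(0,T_0;L^2(Q))$ implies $\{|\tilde H_n|^p\}$ Cauchy in $L^1(0,T_0;L^1(Q))$, and that $\{w_n\}$ Cauchy in $L^1(Q\times[0,T_0])$ implies the former via interpolation with the $L^2(0,T_0;H^2(Q))$ bound of Theorem~\ref{th:5}, are both sound. The problem is entirely in your Step~3, and you have correctly located it yourself.

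The Minty argument is circular here. With $A(s)=|s|^{p-2}s$, Minty's device identifies the weak limit $w$ of $w_n=A(\tilde H_n)$ with $A(H)$ \emph{provided} $\limsup_n\int_0^{T_0}\!\!\int_Q w_n\tilde H_n\,dx\,dt\le\int_0^{T_0}\!\!\int_Q wH\,dx\,dt$. But $\int w_n\tilde H_n=\int|\tilde H_n|^p$, so this condition is exactly $\limsup\int|\tilde H_n|^p\le\int wH$; if the conclusion $w=|H|^{p-2}H$ held, the right-hand side would be $\int|H|^p$, and by weak lower semicontinuity $\int|H|^p\le\liminf\int|\tilde H_n|^p$. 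Thus the Minty hypothesis is equivalent to $\int|\tilde H_n|^p\to\int|H|^p$, i.e.\ to the strong $L^p$ convergence of $\tilde H_n$ that you are trying to establish. In PDE applications the missing inequality usually comes from testing the equation against the solution; no such energy identity is available for the piecewise-constant interpolants here. Even granting the identification, weak convergence of $w_n$ in $L^2(0,T_0;H^2(Q))$ does not imply convergence in measure, so Vitali cannot be closed this way; and the $H^2$-in-space bound carries no time regularity, so Aubin--Lions is unavailable exactly as you note.

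The paper bypasses compactness altogether and proves the Cauchy property directly, exploiting a structural fact you have not used: by \eqref{div eq}, $\tilde H_n-\tilde H_m=-\Div\bigl(D\tilde h_n/\tilde J_n-D\tilde h_m/\tilde J_m\bigr)$, and Theorem~\ref{th:pippa}(i) gives $\varepsilon_{n,m}:=\sup_t\|D\tilde h_n/\tilde J_n-D\tilde h_m/\tilde J_m\|_{L^\infty}\to0$. This divergence form, after multiplying by suitable powers of $\tilde H_n,\tilde H_m$ (handled via a truncation $T^\delta$ to control the lack of differentiability of $s\mapsto|s|^{p-2}$ near $0$) and integrating by parts against the solution of an auxiliary Poisson problem, yields quantitative $H^{-1}$ smallness of $\tilde H_n^k-\tilde H_m^k$ (with $k=[p]$) in terms of $\varepsilon_{n,m}$. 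One then interpolates between this $H^{-1}$ smallness and the $L^2(0,T_0;H^2)$ bound on $w_n$ from Theorem~\ref{th:5} via Lemma~\ref{lm:H-1}. The same mechanism is then reused to treat $w_n-w_m$. In short, the paper trades your abstract compactness step for a concrete PDE estimate driven by the $C^{1,\alpha}$ convergence of the profiles; without an analogous quantitative input, your Vitali route remains incomplete.
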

For the proof of the lemma we need the following inequality.
\begin{lemma}\label{facile}
Let $p>1$. There exists $c_p>0$ such that
$$
\frac{1}{c_p}(x^{p-1}+y^{p-1})\leq \frac{|x^p-y^p|}{|x-y|}\leq c_p(x^{p-1}+y^{p-1})\,.
$$

\end{lemma}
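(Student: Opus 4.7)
The plan is to exploit the positive homogeneity of both sides of the inequality to reduce to a bounded one-variable problem, and then verify the bounds by a continuity/asymptotics argument.

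Without loss of generality assume $x \geq y \geq 0$, with $x > 0$ (otherwise both sides vanish). First dispatch the trivial case $y = 0$: then the middle quantity equals $x^{p-1}$ and the outer one equals $x^{p-1}$, so the inequalities hold with constant $1$. So assume $x \geq y > 0$. Both the numerator $|x^p - y^p|/|x - y|$ and the bound $x^{p-1} + y^{p-1}$ are positively homogeneous of degree $p-1$ in $(x,y)$, so dividing through by $y^{p-1}$ and setting $t := x/y \in [1, \infty)$ reduces the statement to finding $c_p > 0$ such that
\begin{equation*}
\frac{1}{c_p}\,(t^{p-1} + 1) \leq \frac{t^p - 1}{t - 1} \leq c_p\,(t^{p-1} + 1) \qquad \text{for all } t > 1,
\end{equation*}
and checking the case $t = 1$ by continuity.

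Define
\begin{equation*}
g(t) := \frac{t^p - 1}{(t - 1)(t^{p-1} + 1)}, \qquad t \in (1, \infty).
\end{equation*}
Then $g$ is continuous and strictly positive on $(1,\infty)$. The two required bounds will follow once I show that $g$ extends continuously to $[1,\infty]$ with finite, strictly positive limits at both endpoints. At $t \to 1^+$, Taylor expansion gives $t^p - 1 = p(t-1) + O((t-1)^2)$ and $t^{p-1} + 1 \to 2$, so $g(t) \to p/2 > 0$. At $t \to \infty$, $t^p - 1 \sim t^p$ and $(t-1)(t^{p-1} + 1) \sim t \cdot t^{p-1} = t^p$, so $g(t) \to 1 > 0$. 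Hence $g$ extends to a continuous, strictly positive function on the compact set $[1,\infty]$ (with values $p/2$ at $1$ and $1$ at $\infty$), and therefore attains a finite positive minimum $m_p$ and maximum $M_p$. Taking $c_p := \max(M_p, 1/m_p)$ yields the desired double inequality for all $t \in [1,\infty)$, and by the homogeneity reduction, for all $x \geq y \geq 0$ with $(x,y) \neq (0,0)$.

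There is no real obstacle here; the only subtlety is making sure the denominators do not blow up, which is handled by the continuous extension at $t = 1$ via the first-order Taylor expansion of $t^p$, and the asymptotic analysis at $t = \infty$. By symmetry in $x, y$, the inequality extends to all $x, y \geq 0$.
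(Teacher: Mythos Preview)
Your proof is correct and follows essentially the same approach as the paper: reduce by homogeneity to a one-variable ratio, then observe that the function $g(t)=\dfrac{t^p-1}{(t-1)(t^{p-1}+1)}$ is continuous and positive on $(1,\infty)$ with finite positive limits $p/2$ at $t\to1^+$ and $1$ at $t\to\infty$. The paper's proof is simply a terser version of yours, omitting the explicit treatment of the degenerate case $y=0$ and the compactness wrap-up.
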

\begin{proof}
By homogeneity it is enough to assume $y=1$ and $x>1$ and to observe that  
%%%%%%%%%%change%%%%%%%%
{
$$
\lim_{x\to+\infty}\frac{x^{p}-1}{(x-1)(x^{p-1}+1)}=1\qquad\lim_{x\to1}\frac{x^{p}-1}{(x-1)(x^{p-1}+1)}=\frac{p}2\,.
$$
}
%%%%%%%%%%%%%%%%%%
\end{proof}
\begin{proof}[Proof of Lemma~\ref{th:contazzi}] We split the proof into two steps.

\noindent{\bf Step 1.} We start by showing that $|\tH_n|^p$ is a Cauchy sequence in $L^1(0,T_0;L^1(Q))$. Set $k:=[p]$, where $[\cdot]$ denotes the integer part. Note that $k\ge 2$ since $p>2$. From Lemma~\ref{facile} we get 
\begin{align}
\int_0^{T_0}\int_Q\bigl||\tH_n|^p&-|\tH_m|^p\bigr|\,dxdt= \int_0^{T_0}\int_Q\bigl||\tH^k_n|^{\frac{p}k}-|\tH^k_m|^{\frac{p}k}\bigr|\,dxdt\nonumber \\
&\leq c \int_0^{T_0}\int_Q\bigl||\tH^k_n|-|\tH^k_m|\bigr|\bigl(|\tH_n|^k+|\tH_m|^k\bigr)^{\frac{p}k-1}\,dxdt\nonumber\\
&\leq c\int_0^{T_0}\biggl(\int_Q|\tH_n^k-\tH_m^k|^2\, dx\biggr)^{\frac12}(\|\tH_n\|_{\infty}+\|\tH_m\|_{\infty})^{p-k}dt\nonumber\\
&\leq c\int_0^{T_0}\big(\|\tH_n^k-\tH_m^k-M_{m,n}\|_{2}+|M_{m,n}|\bigr)(\|\tH_n\|_{\infty}+\|\tH_m\|_{\infty})^{p-k}dt
\label{cz1}\,,
\end{align}
where $M_{m,n}:=\int_Q (\tH_n^k-\tH_m^k)\, dx$.
Set
\begin{equation}
w_n:=|\tH_n|^{p-2}\tH_n
\label{wn}
\end{equation}
and observe that $\tH_n^k=(w^+_n)^{\frac{k}{p-1}}+(-1)^k(w^-_n)^{\frac{k}{p-1}}$. Thus, 
\beq\label{cz2}
|D\tH_n^k|\leq \bigl|D(w^+_n)^{\frac{k}{p-1}}\bigr|+ \bigl|D(w^-_n)^{\frac{k}{p-1}}\bigr|\leq
c|Dw_n||w_n|^{\frac{k}{p-1}-1}=c|Dw_n||\tH_n|^{k-p+1}\,.
\eeq
From Lemma~\ref{lm:H-1} and inequalities \eqref{cz1}, \eqref{cz2} we get 
\begin{align}
\int_0^{T_0}&\int_Q\bigl||\tH_n|^p-|\tH_m|^p\bigr|\,dxdt\nonumber\\
&\leq c
\int_0^{T_0}\big(\|\tH_n^k-\tH_m^k-M_{m,n}\|^{\frac12}_{H^{-1}}\|D\tH_n^k-D\tH_m^k\|^{\frac12}_{2}+|M_{m,n}|\big)(\|\tH_n\|_{\infty}+\|\tH_m\|_{\infty})^{p-k}\,dt\nonumber\\
&\leq c
\int_0^{T_0}\|\tH_n^k-\tH_m^k-M_{m,n}\|^{\frac12}_{H^{-1}}(\|Dw_n\|_{2}+\|Dw_m\|_{2})^{\frac12}(\|\tH_n\|_{\infty}+\|\tH_m\|_{\infty})^{\frac{p-k+1}2}\,dt\nonumber\\
&\quad+\int_0^{T_0}|M_{m,n}|(\|\tH_n\|_{\infty}+\|\tH_m\|_{\infty})^{p-k}\,dt\,. \label{cz3}
\end{align}
Fix $n$, $m\in \N$. We now estimate the $H^{-1}$-norm of $\tH_n^k-\tH_m^k-M_{m,n}$. Recall that, in view of Remark \ref{rm:normah-1},
%%%%change%%%%%
{
\begin{equation} 
\|\tH_n^k-\tH_m^k-M_{m,n}\|_{H^{-1}}=\|Du\|_2\,,
\label{900}
\end{equation} 
}
%%%%%%%%%%%
where $u$ is the unique $Q$-periodic solution  of 
%%%%change%%%%%
{
\begin{equation} 
\begin{cases}
-\Delta u=\tH_n^k-\tH_m^k-M_{m,n} & \text{in $Q$,}\\
\int_Qu\, dx=0\,.
\end{cases}
\label{901}
\end{equation} 
}
%%%%%%%%%%%
Thus
\beq\label{cz4}
\int_Q|Du|^2\, dx=\int_Qu(\tH_n^k-\tH_m^k-M_{m,n})\, dx=\int_Qu(\tH_n-\tH_m)\sum_{i=0}^{k-1}\tH_n^{k-1-i}\tH_m^i\, dx\,,
\eeq
where we used also the fact that $\int_Qu\, dx=0$.
Fix $\de\in (0, 1)$ (to be chosen) and let $T^{\de}(t):=(t\lor-\de)\land\de$. Then
 %%%%change%%%
 {
\begin{equation} 
\tH_n=[(\tH_n-\de)^++\de]+T^\de(\tH_n)- [(-\tH_n-\de)^++\de]
\label{902}
\end{equation} 
}
%%%%%%%%%%%
and (see \eqref{wn})
$$
(\tH_n-\de)^++\de=\begin{cases}
w_n^{\frac{1}{p-1}} & \text{if $w_n\geq \de^{{p-1}}$,}\\
\de & \text{otherwise.}
\end{cases}
$$
Hence
\beq\label{cz5}
|D[(\tH_n-\de)^++\de]|\leq c\frac{|Dw_n|}{\de^{p-2}}\,,
\eeq
and a similar estimates holds for $D[(-\tH_n-\de)^++\de]$. We now set 
\begin{equation}
V_{n,\de}:=[(\tH_n-\de)^++\de]- [(-\tH_n-\de)^++\de]\,.
\label{Vndelta}
\end{equation}
From \eqref{cz4} we have
\begin{align}
&\int_Q|Du|^2\, dx\nonumber \\
&= \int_Q u(\tH_n-\tH_m)\sum_{i=0}^{k-1}\sum_{r=0}^{k-1-i}\sum_{s=0}^i\left(k-1-i\atop r\right)
\left(i\atop s\right)V_{n, \de}^{k-1-i-r}V_{m, \de}^{i-s}\bigl[T^\de(\tH_n)\bigr]^r\bigl[T^\de(\tH_m)\bigr]^s\, dx\nonumber\\
&= \int_Q u(\tH_n-\tH_m)\sum_{i=0}^{k-1}V_{n, \de}^{k-1-i}V_{m, \de}^{i}\, dx\nonumber\\
&+ \int_Q u(\tH_n-\tH_m)\sum_{i=0}^{k-1}\sum_{(r,s)\neq (0,0)}\left(k-1-i\atop r\right)
\left(i\atop s\right)V_{n, \de}^{k-1-i-r}V_{m, \de}^{i-s}\bigl[T^\de(\tH_n)\bigr]^r\bigl[T^\de(\tH_m)\bigr]^s\, dx\nonumber\\
&=: L+M \,.\label{cz6}
\end{align}
We start by estimating the last term in the previous chain of equalities:
\begin{align}
|M|&\leq c\int_Q |u||\tH_n-\tH_m|\sum_{i=0}^{k-1}\sum_{(r,s)\neq (0,0)}\de^{r+s}V_{n, \de}^{k-1-i-r}V_{m, \de}^{i-s}\, dx\nonumber\\
&\leq c\int_Q |u|(|\tH_n|+|\tH_m|)\sum_{\ell=1}^{k-1}\de^{\ell}\bigl[V_{n, \de}^{k-1-\ell}+V_{m, \de}^{k-1-\ell}\bigr]\, dx\nonumber \\
&\leq c\de \int_Q |u|(|\tH_n|+|\tH_m|)\bigl(1+V_{n, \de}^{k-2}+V_{m, \de}^{k-2}\bigr)\, dx\nonumber\\
&\leq c\de\biggl(\int_Qu^2\, dx\biggr)^{\frac12}(1+\|\tH_n\|_{\infty}+\|\tH_m\|_{\infty})^{k-1}\nonumber\\
&\leq \frac16\int_Q|Du|^2\, dx+c\de^2(1+\|\tH_n\|_{\infty}+\|\tH_m\|_{\infty})^{2(k-1)}\,,\label{cz7}
\end{align}
where  we used \eqref{Vndelta} and the Poincar\'e and Young inequalities. 
To deal with $L$, we integrate by parts and use \eqref{div eq} and the periodicity of $u$, $\tilde h_n$, and $\tilde h_m$ to get
%%%%%%%%%%%%%%change%%%%%%%%%%
{
\begin{align*}
L&=\int_Q\Bigl(\frac{D\tilde h_n}{\tilde J_n}-\frac{D\tilde h_m}{\tilde J_m}\Bigr)Du
\sum_{i=0}^{k-1}V_{n, \de}^{k-1-i}V_{m, \de}^{i}\, dx +\int_Q\Bigl(\frac{D\tilde h_n}{\tilde J_n}-\frac{D\tilde h_m}{\tilde J_m} \Bigr)u
\sum_{i=0}^{k-1}D\bigl(V_{n, \de}^{k-1-i}V_{m, \de}^{i}\bigr)\, dx\,,
\end{align*}
}
%%%%%%%%%%%%%%%%%%
where 
%%%%%%%%%%%%%%change%%%%%%%%%
{
\begin{equation} 
\tilde h_n(x,t):=h_{i,n}(x)\quad\text{if }t\in[(i-1)\tau_n, {i}\tau_n)\quad \text{and} \quad\tilde J_{n}(x.t):=\sqrt{1+|D\tilde h_n(x,t)|^2}\,.\label{tilde h n}
\end{equation}
}
%%%%%%%%%%%%%%
From the equality above, recalling \eqref{wn}, \eqref{cz5}, and \eqref{Vndelta}, and setting
$$
\e_{n,m}:=\sup_{t\in [0, T_0]}\Bigl\|\frac{D\tilde h_n}{\tilde J_n}(\cdot, t)-\frac{D\tilde h_m}{\tilde J_m}(\cdot, t) \Bigr\|_{\infty}\,,
$$
we may estimate
\begin{align*}
|L| &\leq c\e_{n,m}\int_Q|Du|\bigl(1+|\tH_n|^{k-1}+|\tH_m|^{k-1}\bigr)\, dx\\
&\quad+ c\e_{n,m}\int_Q|u|\sum_{i=0}^{k-1}\bigl[|DV_{n, \de}^{k-1-i}|V_{m,\de}^i+|DV_{m, \de}^{i}|V_{n,\de}^{k-1-i}\bigr]\, dx\\
&\leq \frac16\int_Q|Du|^2\, dx+c\e_{n,m}^2(1+\|\tH_n\|_{\infty}+\|\tH_m\|_{\infty})^{2(k-1)}\\
&\quad +c\e_{n,m}\int_Q|u|\frac{|Dw_n|}{\de^{p-2}}\sum_{i=0}^{k-2}V_{n, \de}^{k-2-i}V_{m, \de}^i\, dx+ c\e_{n,m}\int_Q|u|\frac{|Dw_m|}{\de^{p-2}}\sum_{i=0}^{k-2}V_{m, \de}^{i-1}V_{n, \de}^{k-i-1}\, dx\\
&\leq  \frac16\int_Q|Du|^2\, dx+c\e_{n,m}^2(1+\|\tH_n\|_{\infty}+\|\tH_m\|_{\infty})^{2(k-1)}\\
&\quad +c\frac{\e_{n,m}}{\de^{p-2}}\int_Q|u|(|Dw_n|+|Dw_m|)(1+\|\tH_n\|_{\infty}+\|\tH_m\|_{\infty})^{k-2}\, dx\\
& \leq  \frac13\int_Q|Du|^2\, dx+c\e_{n,m}^2(1+\|\tH_n\|_{\infty}+\|\tH_m\|_{\infty})^{2(k-1)}\\
&\quad +c\frac{\e^2_{n,m}}{\de^{2(p-2)}}\int_Q(|Dw_n|+|Dw_m|)^2(1+\|\tH_n\|_{\infty}+\|\tH_m\|_{\infty})^{2(k-2)}\, dx\,.
\end{align*}
From this estimate, \eqref{900}, \eqref{901}, \eqref{cz6}, and \eqref{cz7}, choosing $\de^{2(p-2)}=\e_{n,m}$, 
with $n,m$ so large that $\e_{n,m}<1$ (see Theorem \ref{th:pippa}(i)), we obtain 
\begin{multline}\label{aggiunta1}
\|\tH_n^k-\tH_m^k-M_{m,n}\|_{H^{-1}}^2\leq c\e^{\alpha}_{n,m}\bigl[(1+\|\tH_n\|_{\infty}+\|\tH_m\|_{\infty})^{2(k-1)}\\
+(\|Dw_n\|_2+\|Dw_m\|_2)^2(1+\|\tH_n\|_{\infty}+\|\tH_m\|_{\infty})^{2(k-2)}\bigr]\,,
\end{multline}
where $\alpha:=\min\{1, \frac{1}{p-2}\}$.

We now estimate $M_{m,n}$. Since
$$
M_{m,n}=\int_Q(\tH_n^k-\tH_m^k)\, dx=\int_Q(\tH_n-\tH_m)\sum_{i=0}^{k-1}\tH_n^{k-1-i}\tH_m^i\, dx\,,
$$
using the same argument  with $u\equiv 1$ (see \eqref{aggiunta1}) gives
$$
|M_{m,n}|\leq c\e^{\frac{\alpha}2}_{n,m}\bigl[(1+\|\tH_n\|_{\infty}+\|\tH_m\|_{\infty})^{(k-1)}\\
+(\|Dw_n\|_2+\|Dw_m\|_2)(1+\|\tH_n\|_{\infty}+\|\tH_m\|_{\infty})^{(k-2)}\bigr] \,.
$$
 From this inequality, recalling \eqref{wn}, \eqref{cz3}, and \eqref{aggiunta1}, we deduce
%%%%%%%%%%%%%%%%change%%%%%%%%%
{
\begin{align*}
\int_0^{T_0}\int_Q\bigl||\tH_n|^p&-|\tH_m|^p\bigr|\,dxdt\leq 
c(\e_{n,m})^{\frac\alpha4}\int_0^{T_0}(\|Dw_n\|_{2}+\|Dw_m\|_{2})^{\frac12}(1+\|w_n\|_{\infty}+\|w_m\|_{\infty})^{\frac{p}{2(p-1)}}\,dt\\
&\quad + c(\e_{n,m})^{\frac\alpha4}\int_0^{T_0}(\|Dw_n\|_{2}+\|Dw_m\|_{2})(1+\|w_n\|_{\infty}+\|w_m\|_{\infty})^{\frac12}\,dt\\
&\quad+ c
(\e_{n,m})^{\frac\alpha2}\int_0^{T_0}(1+\|w_n\|_{\infty}+\|w_m\|_{\infty})\,dt\\
&\quad + c(\e_{n,m})^{\frac\alpha2}\int_0^{T_0}(\|Dw_n\|_{2}+\|Dw_m\|_{2})(\|w_n\|_{\infty}+\|w_m\|_{\infty})^{\frac{p-2}{p-1}}\,dt\,.
\end{align*}
}
%%%%%%%%%%%%%%%
%%%%%%%%%%%%%%%%change%%%%%%%%%
{
Observe now that by \eqref{ghnboundbis} and \eqref{H tilde} there exists $C>0$  such that 
$\int_Q|w_n|\, dx\leq \|\tH_n\|_{p-1}^{p-1}\leq C$ for all $n$ 
and thus, using the embedding of $H^2(Q)$ into $C(\overline{Q})$ and Poincar\'e's inequality, 
\begin{equation}  
\|Dw_n\|_2+\|w_n\|_{\infty}\leq C(1+\|D^2w_n\|_2)\,.
\label{Dwn}
\end{equation}
}

%%%%%%%%%%%%%%%%%%%%%%%%%%%
Therefore, from the above inequalities and using also the fact that $\frac12+\frac{p}{2(p-1)}<2$ and that $1+\frac{p-2}{p-1}<2$,  we conclude 
$$
\int_0^{T_0}\int_Q\bigl||\tH_n|^p-|\tH_m|^p\bigr|\,dxdt\leq c(\e_{n,m})^\frac\alpha4\int_0^{T_0}\bigl(1+\|D^2w_n\|_2+\|D^2w_m\|_2\bigr)^{2}\,dt\leq c(\e_{n,m})^\frac\alpha4\,,
$$
where the last inequality follows from \eqref{51}. This proves that the sequence $|\tH_n|^p$ is a Cauchy sequence
in $L^1(0,T_0; L^1(Q))$. Note also that using Lemma~\ref{facile} we have
\begin{align}
\int_0^{T_0}\int_Q\bigl||\tH_n|-|\tH_m|\bigr|^p\,dxdt & \leq c \int_0^{T_0}\int_Q\bigl||\tH_n|-|\tH_m|\bigr|
\bigl(|\tH_n|+|\tH_m|\bigr)^{p-1}\,dxdt\nonumber\\
&\leq c \int_0^{T_0}\int_Q\bigl||\tH_n|^p-|\tH_m|^p\bigr|\,dxdt\,. \label{cz8}
\end{align}

\noindent{\bf Step 2.} We now conclude the proof by showing that $w_n$ is a Cauchy sequence in $L^1(0,T_0;L^2(Q))$. To this purpose, we use Lemma~\ref{lm:H-1} to obtain 
\begin{align}
\int_0^{T_0}\|w_n-w_m\|_2\, dt&\leq \int_0^{T_0}\|w_n-w_m-N_{m,n}\|_2\, dt+\int_0^{T_0}|N_{m,n}|\, dt\nonumber\\
&\leq c \int_0^{T_0}\|w_n-w_m-N_{m,n}\|^{\frac23}_{H^{-1}}\|D^2w_n-D^2w_m\|^{\frac13}_{2}\, dt+\int_0^{T_0}|N_{m,n}|\, dt\,,
\label{cz8bis}
\end{align}
where $N_{m,n}:=\int_Q(w_n-w_m)\, dx$.
%%%%%%%%%%%%%%change%%%%%%%%%%%%
{
As observed in \eqref{900} and \eqref{901} , $\|w_n-w_m-N_{m,n}\|_{H^{-1}}=\|Dv\|_2$,
}
%%%%%%%%%%%%%%%%%%%%%%%%%%
 where $v$ is the unique $Q$-periodic solution  of 
$$
\begin{cases}
-\Delta v=w_n-w_m-N_{m,n} & \text{in $Q$,}\\
\int_Qv\, dx=0\,.
\end{cases}
$$
As in \eqref{cz4}, using the fact that $\int_Qv\, dx=0$, we have
\begin{align}
\int_Q|Dv|^2\, dx & =\int_Q(w_n-w_m-N_{m,n})v=\int_Q\bigl(|\tH_n|^{p-2}\tH_n-|\tH_m|^{p-2}\tH_m\bigr)v\, dx \nonumber\\
&=\int_Q\bigl(|\tH_n|^{p-2}-|\tH_m|^{p-2}\bigr)\tH_nv\, dx+ 
\int_Q(\tH_n-\tH_m)|\tH_m|^{p-2}v\, dx\nonumber\\
&=:\tilde L+\tilde M\,. \label{cz9}
\end{align}
Now, by H\"older's inequality twice and the Sobolev embedding theorem,
\begin{align}
|\tilde L| & \leq \int_Q\bigl|(|\tH_n|^{p})^{\frac{p-2}{p}}-(|\tH_m|^{p})^{\frac{p-2}{p}}\bigr||\tH_n||v|\, dx\leq 
\int_Q\bigl||\tH_n|^{p}-|\tH_m|^{p}\bigr|^{\frac{p-2}{p}}|\tH_n||v|\, dx\nonumber\\
&\leq \|v\|_p\|\tH_n\|_{\infty}\biggl(\int_Q\bigl||\tH_n|^{p}-|\tH_m|^{p}\bigr|^{\frac{p-2}{p-1}}\, dx\biggr)^{\frac{p-1}{p}}
\leq  c\|Dv\|_2\|\tH_n\|_{\infty}\||\tH_n|^{p}-|\tH_m|^{p}\|_1^{\frac{p-2}{p}}\nonumber\\
&\leq \frac16\int_Q|Dv|^2\, dx+c\|\tH_n\|^2_{\infty}\||\tH_n|^{p}-|\tH_m|^{p}\|_1^{\frac{2(p-2)}{p}}\,.\label{cz10}
\end{align}

To estimate $\tilde M$,  arguing as in the previous step (see \eqref{902}) and observing that $(-|\tH_m|^{p-2}-\de)^+=0$), we write
%%%%%%%%%%%%%change%%%%%%%%%
{
\begin{align*}
\tilde M & = \int_Q(\tH_n-\tH_m)\bigl[(|\tH_m|^{p-2}-\de)^++\de\bigr]v\, dx+
\int_Q(\tH_n-\tH_m)\bigl[T^{\de}(|\tH_m|^{p-2})-\de\bigr]v\, dx\\
&= \int_Q\Bigl(\frac{D\tilde h_n}{\tilde J_n}-\frac{D\tilde h_m}{\tilde J_m}\Bigr)Dv \bigl[(|\tH_m|^{p-2}-\de)^++\de\bigr]\, dx\\
&\quad+\int_Q\Bigl(\frac{D\tilde h_n}{\tilde J_n}-\frac{D\tilde h_m}{\tilde J_m}\Bigr)v D\bigl[(|\tH_m|^{p-2}-\de)^++\de\bigr]\, dx+
\int_Q(\tH_n-\tH_m)\bigl[T^{\de}(|\tH_m|^{p-2})-\de\bigr]v\, dx\,.
\end{align*}
 Similarly to what we proved in \eqref{cz5}, we have 
$$
|D[(|\tH_m|^{p-2}-\de)^++\de]|\leq c\frac{|Dw_m|}{\de^{\frac1{p-2}}}\,.
$$
}
%%%%%%%%%%%%%%%%%%%%%%%
Therefore, arguing as in the previous step, we obtain 
\begin{align*}
|\tilde M|&\leq \frac16\int_Q|Dv|^2\, dx+c\e^2_{n,m}(1+\|\tH_m\|_\infty)^{2(p-2)}\\
& \quad+c\e_{n,m}\int_Q|v|\frac{|Dw_m|}{\delta^{\frac1{p-2}}}\, dx+c\de\int_Q|v|(\|\tH_n\|_\infty+\|\tH_m\|_\infty)\, dx\\
&\leq \frac13\int_Q|Dv|^2\, dx+c\e^2_{n,m}(1+\|\tH_m\|_\infty)^{2(p-2)}+c\frac{\e^2_{n,m}}{\delta^{\frac2{p-2}}}\|Dw_m\|_2^2+c\de^2(\|\tH_n\|_\infty+\|\tH_m\|_\infty)^2\,,
\end{align*}
where in the last line we used the Young and Poincar\'e inequalities. Choosing $\de$ so that $\de^{\frac2{p-2}}=\e_{n,m}$ and recalling \eqref{cz9} and \eqref{cz10}, we conclude that 
\begin{align}
\|w_n-w_m-N_{m,n}\|_{H^{-1}}&\leq c\|\tH_n\|_{\infty}\||\tH_n|^{p}-|\tH_m|^{p}\|_1^{\frac{p-2}{p}}\nonumber\\
&\quad+c(\e_{n,m})^{\frac{\beta}{2}}(1+\|\tH_n\|_\infty+\|\tH_m\|_\infty+\|\tH_m\|_\infty^{p-2}+\|Dw_m\|_2)\,,\label{aggiunta2}
\end{align}
where $\beta=\min\{1, p-2\}$.

Since by \eqref{wn}, 
$$
N_{m,n}=\int_Q(w_n-w_m)\, dx
=\int_Q\bigl(|\tH_n|^{p-2}-|\tH_m|^{p-2}\bigr)\tH_n\, dx+\int_Q(\tH_n-\tH_m)|\tH_m|^{p-2}\, dx\,,
$$
the same argument used to estimate the last two integrals in \eqref{cz9} (with $v\equiv 1$) gives 
\begin{align*}
|N_{m,n}|&\leq  c\|\tH_n\|_{\infty}\||\tH_n|^{p}-|\tH_m|^{p}\|_1^{\frac{p-2}{p}}\\
&\quad+c(\e_{n,m})^{\frac{\beta}{2}}(\|\tH_n\|_\infty+\|\tH_m\|_\infty+\|\tH_m\|_\infty^{p-2}+\|Dw_m\|_2)\,.
\end{align*}
%%%%%%%%%%%%%change%%%%%%%%%
{
From this estimate, recalling \eqref{wn}, \eqref{cz8bis} and \eqref{aggiunta2}, we have that
\begin{align*}
&\int_0^{T_0}\|w_n-w_m\|_2\, dt\leq
c\int_0^{T_0}\||\tH_n|^{p}-|\tH_m|^{p}\|_1^{\frac{2(p-2)}{3p}}\|w_n\|_\infty^{\frac{2}{3(p-1)}}(\|D^2w_n\|_2+\|D^2w_m\|_2)^{\frac13}\, dt\\
&\quad+ c (\e_{n,m})^{\frac{\beta}{3}}\int_0^{T_0}\bigl(1+\|w_n\|^{\frac{1}{p-1}}_\infty+\|w_m\|^{\frac{1}{p-1}}_\infty+\|w_m\|_\infty^{\frac{p-2}{p-1}}+\|Dw_m\|_2\bigr)^{\frac23}(\|D^2w_n\|_2+\|D^2w_m\|_2)^{\frac13}\, dt\\
&\quad+  c\int_0^{T_0}\|w_n\|^{\frac{1}{p-1}}_{\infty}\||\tH_n|^{p}-|\tH_m|^{p}\|_1^{\frac{p-2}{p}}\, dt\\
&\quad+c(\e_{n,m})^{\frac{\beta}{2}}\int_0^{T_0}(\|w_n\|^{\frac{1}{p-1}}_\infty+\|w_m\|^{\frac{1}{p-1}}_\infty+\|w_m\|_\infty^{\frac{p-2}{p-1}}+\|Dw_m\|_2)\, dt\,.
\end{align*}
}
%%%%%%%%%%%%%%%%%%%%%%
Using \eqref{Dwn} and H\"older's inequality, we can bound the right-hand side of this inequality by
\begin{align*}
&\leq c\int_0^{T_0}\||\tH_n|^{p}-|\tH_m|^{p}\|_1^{\frac{2(p-2)}{3p}}(1+\|D^2w_n\|_2+\|D^2w_m\|_2)^{\frac13+\frac{2}{3(p-1)}}\, dt\\
&\quad+ c (\e_{n,m})^{\frac{\beta}{3}}\int_0^{T_0}\bigl(1+\|D^2w_n\|_2+\|D^2w_m\|_2\bigr)^{\frac23}(\|D^2w_n\|_2+\|D^2w_m\|_2)^{\frac13}\, dt\\
&\quad+  c\int_0^{T_0}\bigl(1+\|D^2w_n\|_2\bigr)^{\frac{1}{p-1}}\||\tH_n|^{p}-|\tH_m|^{p}\|_1^{\frac{p-2}{p}}\, dt
+c(\e_{n,m})^{\frac{\beta}{2}}\int_0^{T_0}(1+\|D^2w_n\|_2+\|D^2w_m\|_2)\, dt\\
&\leq c\biggl(\int_0^{T_0}\int_Q\bigl||\tH_n|^{p}-|\tH_m|^{p}\bigr|\,dxdt\biggr)^{\frac{2(p-2)}{3p}}
\biggl[\int_0^{T_0}\bigl(\|D^2w_n\|_2+\|D^2w_m\|_2\bigr)^{\frac{p(p+1)}{(p-1)(p+4)}}\biggr]^{\frac{p+4}{3p}}\\
&\quad+c\biggl(\int_0^{T_0}\int_Q\bigl||\tH_n|^{p}-|\tH_m|^{p}\bigr|\,dxdt\biggr)^{\frac{(p-2)}{2}}
\biggl[\int_0^{T_0}\bigl(1+\|D^2w_n\|_2\bigr)^{\frac{p}{2(p-1)}}\biggr]^{\frac{2}{p}}\\
&\quad+ c(\e_{n,m})^{\frac{\beta}{3}}\int_0^{T_0}(1+\|D^2w_n\|_2+\|D^2w_m\|_2)\,dt\,.
\end{align*}
Since $\frac{p(p+1)}{(p-1)(p+4)}<2$ and $\frac{p}{2(p-1)}<2$, recalling \eqref{51}, we finally have 
\begin{multline*}
\int_0^{T_0}\|w_n-w_m\|_2\, dt\leq c\biggl(\int_0^{T_0}\int_Q\bigl||\tH_n|^{p}-|\tH_m|^{p}\bigr|\,dxdt\biggr)^{\frac{2(p-2)}{3p}}\\+c\biggl(\int_0^{T_0}\int_Q\bigl||\tH_n|^{p}-|\tH_m|^{p}\bigr|\,dxdt\biggr)^{\frac{(p-2)}{2}}+c(\e_{n,m})^{\frac{\beta}{3}}\,.
\end{multline*}
The conclusion follows from Step 1.
\end{proof}

\begin{corollary}\label{cor:conv}
Let $\tH_n$  be the functions defined in \eqref{H tilde}, let $h$  be the limiting function provided by Theorem~\ref{th:pippa}, and set 
$$
H:=-\Div\Bigl(\frac{Dh}{1+|Dh|^2}\Bigr)\,.
$$
Then,
\beq\label{conv1}
|\tH_n|^p\to |H|^p \text{ in $L^1(0,T_0; L^1(Q))$}\quad\text{and}\quad
|\tH_n|^{p-2}\tH_n\to |H|^{p-2}H\text{ in $L^1(0,T_0; L^2(Q))$.}
\eeq
\end{corollary}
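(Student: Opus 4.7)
The plan is to combine the Cauchy property from Lemma~\ref{th:contazzi} with the regularity information from Theorem~\ref{th:pippa}. By Lemma~\ref{th:contazzi} there exist $g\in L^1(0,T_0;L^1(Q))$ and $f\in L^1(0,T_0;L^2(Q))$ such that $|\tH_n|^p\to g$ and $|\tH_n|^{p-2}\tH_n\to f$ in their respective spaces. Passing to a (non-relabelled) subsequence we may assume both convergences hold a.e.\ on $Q\times(0,T_0)$. Since the map $s\mapsto|s|^{p-2}s$ is a continuous bijection of $\R$ with inverse $w\mapsto \mathrm{sgn}(w)|w|^{1/(p-1)}$, we conclude that $\tH_n\to T$ a.e.\ for some measurable function $T$ satisfying $|T|^{p-2}T=f$ and $|T|^p=g$ a.e.

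The main task is then to identify $T$ with $H$. Here the first key observation is that the piecewise-constant in time interpolant $\tilde h_n$ defined in \eqref{tilde h n} inherits the convergence of the piecewise-linear interpolant $h_n$: indeed for $t\in[(i-1)\tau_n,i\tau_n)$ one has $\tilde h_n(\cdot,t)=h_n(\cdot,i\tau_n)$, so by Theorem~\ref{th:pippa}(i),
\[
\|\tilde h_n(\cdot,t)-h_n(\cdot,t)\|_{C^{1,\alpha}_\#(Q)}\leq C\tau_n^\beta\,,
\]
and hence $\tilde h_n\to h$ in $C([0,T_0];C^1_\#(Q))$. Consequently $D\tilde h_n/\tilde J_n\to Dh/J$ uniformly on $Q\times[0,T_0]$, and taking the divergence in the distributional sense yields
\[
\tH_n=-\Div\Bigl(\frac{D\tilde h_n}{\tilde J_n}\Bigr)\longrightarrow -\Div\Bigl(\frac{Dh}{J}\Bigr)=H \quad\text{in }\mathcal{D}'(Q\times(0,T_0)).
\]

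To reconcile this distributional limit with the a.e.\ limit $T$, I would use \eqref{ghnboundbis}, which gives a uniform bound for $\tH_n$ in $L^\infty(0,T_0;L^p(Q))$, in particular in $L^p(Q\times(0,T_0))$. Combined with the a.e.\ convergence $\tH_n\to T$, Vitali's convergence theorem upgrades this to convergence in $L^q(Q\times(0,T_0))$ for every $q<p$, hence in the sense of distributions. By uniqueness of distributional limits we obtain $T=H$ a.e., so that $f=|H|^{p-2}H$ and $g=|H|^p$. Since these limits are uniquely determined, the full sequences (not merely the extracted subsequence) converge, yielding \eqref{conv1}.

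The main obstacle I foresee is the identification step: the Cauchy property only provides limits in abstract function spaces, so it is essential to verify that the piecewise-constant interpolant $\tilde h_n$ (and not only the piecewise-linear $h_n$) converges in $C([0,T_0];C^1_\#(Q))$, in order to pass the divergence to the limit and obtain the distributional convergence $\tH_n\to H$ that pins down the limit.
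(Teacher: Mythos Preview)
Your proof is correct and follows essentially the same approach as the paper: use the Cauchy property from Lemma~\ref{th:contazzi} to obtain limits, then identify them with $|H|^p$ and $|H|^{p-2}H$ via the $C^1$-convergence of $\tilde h_n$ inherited from Theorem~\ref{th:pippa}(i). The paper carries out the identification slice-by-slice in $t$ (establishing $\tH_n(\cdot,t)\wto H(\cdot,t)$ weakly in $L^p(Q)$ for every $t$, then passing to a further $t$-dependent subsequence for a.e.\ convergence in $x$), whereas you work directly on the space-time product via a.e.\ convergence, Vitali, and uniqueness of distributional limits; both executions are valid.
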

\begin{proof}
Let $\tilde h_n$ and $\tilde J_n$ be  as in the proof of Lemma~\ref{th:contazzi}. 
From Theorem~\ref{th:pippa}(i) we get that for all $t\in (0, T_0)$ and for all $\vphi\in C^{1}_\#(Q)$ we have
%%%%%%%%%%%%%%%%%change%%%%%%%%%%%%%%%%
{
$$
\int_Q\tH_n\vphi\, dx=\int_Q\frac{D\tilde h_n}{\tilde J_n}\cdot D\vphi\, dx\to \int_Q\frac{Dh}{J}\cdot D\vphi\, dx=\int_Q\tH\vphi\, dx\,,$$
}
%%%%%%%%%%%%%%%%%%%%%%%%%%%%%%%
where $J=\sqrt{1+|Dh|^2}$. Since for every $t$, $\tH_n(\cdot, t)$ is bounded in $L^p(Q)$, we deduce that 
for all $t\in (0,T_0)$,
\beq\label{conv2}
\tH_n(\cdot, t)\wto H(\cdot, t)\qquad\text{weakly in $L^p(Q)$.}
\eeq
%Moreover, from \eqref{cz8} we have that, up to a not relabelled subsequence,  for a.e. $t$, $\tH_n(\cdot, t)$ converges in $L^p(Q)$. Thus, $\tH_n(\cdot, t)\to H(\cdot, t)$ in $L^p(Q)$.
On the other hand, from Lemma~\ref{th:contazzi}  we know that there exist a subsequence $n_j$ and two functions $z$, $w$ such that for a.e. $t$,
\beq\label{conv3}
|\tH_{n_j}(\cdot, t)|^p\to z(\cdot, t)\quad\text{in $L^1(Q)$}\qquad\text{and}\qquad (|\tH_{n_j}|^{p-2}\tH_{n_j})(\cdot, t)\to w(\cdot, t)\quad\text{in $L^2(Q)$.}
\eeq
Moreover, for any such $t$ there exists a further subsequence, not relabelled,  (depending on $t$) such that $|\tH_{n_j}(x,t)|^p$, $|\tH_{n_j}(x,t)|^{p-2}\tH_{n_j}(x,t)$, and thus $\tH_{n_j}(x, t)$ converge for a.e. $x$. By \eqref{conv2} $\tH_{n_j}(x, t)\to H(\cdot, t)$  for a.e. $x$.  Thus, we conclude that 
$z=|H|^p$ and $w=|H|^{p-2}H$. 
\end{proof}
We now prove short time  existence for \eqref{geq}.
\begin{theorem}\label{th:existence}
Let $h_0\in W^{2,p}_\#(Q)$, let $h$ be the function given in Theorem~\ref{th:pre-pippa},  and let $T_0>0$ be as in Theorem~\ref{th:pippa}. Then $h$ is a solution of \eqref{geq} in $[0,T_0]$ in the sense of Definition~\ref{def:weaksol} with initial datum $h_0$. Moreover, there exists a non increasing $g$ such that
\beq\label{ex0}
F(h(\cdot, t), u_{h}(\cdot, t))=g(t) \quad\text{for  $t\in[0,T_0]\setminus Z_0$\,,} 
\eeq
where $Z_0$ is a set of zero measure, and 
\beq F(h(\cdot, t), u_{h}(\cdot, t))\leq g(t+)\quad\text{ for }t\in Z_0\,.\label{3000}\eeq 
\end{theorem}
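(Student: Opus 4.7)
\emph{Plan.} The approach is to pass to the limit in the incremental Euler--Lagrange identity \eqref{51bis} (equivalently, its non-intrinsic form \eqref{mostro}) and in the defining relation \eqref{vacca} for $v_{h_{i,n}}$. For $t \in [(i-1)\tau_n, i\tau_n)$ set $\tilde\mu_n(\cdot, t) := v_{h_{i,n}}/\tau_n$, the discrete chemical potential. From the telescoped estimate \eqref{pre-pippa0} and the Poincar\'e inequality on $\Gamma_{h_{i-1,n}}$ (with constants controlled by $\Lambda_0$), $\{\tilde\mu_n\}$ is bounded in $L^2(0,T_0; H^1_\#(Q))$, so a further subsequence converges weakly, $\tilde\mu_n \wto \mu$, to some $\mu \in L^2(0,T_0; H^1_\#(Q))$. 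This candidate $\mu$ must be identified as the bracket in \eqref{geq} and shown to satisfy the gradient-flow equation.

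\emph{Identification of $\mu$ (item (i) of Definition~\ref{def:weaksol}).} Test \eqref{mostro} with $\vphi\eta(t)$, $\vphi \in C^2_\#(Q)$ with $\int_Q \vphi\, dx = 0$ and $\eta \in C^\infty_c(0,T_0)$, and integrate. The elastic and anisotropic terms pass to the limit by the $C^0$-type convergences in Theorem~\ref{th:pippa}(i),(iii); the $|H|^p$ integrals do so by Corollary~\ref{cor:conv}. Every remaining contribution has the form $\int |H_n|^{p-2}H_n \cdot P(D^2h_n, Dh_n) \cdot R(D^2\vphi, D\vphi)\, dx\, dt$ with $P$ linear in $D^2h_n$ and $R$ smooth. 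The key step is the upgrade
$$|H_n|^{p-2}H_n \to |H|^{p-2}H \quad \text{strongly in } L^r(0,T_0; L^\infty(Q)) \text{ for every } r<2,$$
obtained by interpolating the strong $L^1(0,T_0; L^2(Q))$ convergence of Corollary~\ref{cor:conv} with the $L^2(0,T_0; H^2(Q)) \hookrightarrow L^2(0,T_0; L^\infty(Q))$ bound from Theorem~\ref{th:5}. Since $D^2h_n \wto D^2h$ weakly in $L^p(Q\times(0,T_0))$ by \eqref{ghnboundbis}, a product of an $L^r(L^\infty)$ strongly convergent sequence with an $L^\infty(L^p)$ weakly convergent one, tested against smooth functions, passes to the limit. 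Matching the resulting right-hand side with $\int \mu\vphi\eta$ from the weak convergence of $\tilde\mu_n$ yields Definition~\ref{def:weaksol}(i) and the explicit representation of $\mu$.

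\emph{Gradient-flow equation (item (ii)).} The defining equation \eqref{vacca} reads, in weak form on $\Gamma_{h_{i-1,n}}$,
$$-\int_{\Gamma_{h_{i-1,n}}} D_{\Gamma_{h_{i-1,n}}} \tilde\mu_n \cdot D_{\Gamma_{h_{i-1,n}}}(\vphi\circ\pi)\, d\H^2 = \int_Q \partial_t h_n\, \vphi\, dx$$
for every $\vphi \in H^1_\#(Q)$ and every $t$ in the interior of a time step. Multiplying by $\eta(t) \in C^\infty_c(0,T_0)$ and integrating: on the right, $\partial_t h_n \wto \partial_t h$ weakly in $L^2(0,T_0; H^{-1}_\#(Q))$ by \eqref{31}--\eqref{32}; on the left, Theorem~\ref{th:pippa}(i) yields $\|h_{i-1,n}-h_n(\cdot,t)\|_{C^{1,\alpha}} = O(\tau_n^\beta)$, so the intrinsic first-order operator $D_{\Gamma_{h_{i-1,n}}}$ (whose coefficients are smooth functions of $Dh_{i-1,n}$) converges uniformly in $t$ to $D_{\Gamma_{h(\cdot,t)}}$, while $\tilde\mu_n \wto \mu$ in $L^2(0,T_0; H^1_\#(Q))$. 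Passing to the limit gives
$$\int_0^{T_0} \eta(t)\langle \partial_t h,\vphi\rangle\, dt = -\int_0^{T_0} \eta(t) \int_{\Gamma_{h(\cdot,t)}} D_\Gamma \mu \cdot D_\Gamma(\vphi\circ\pi)\, d\H^2\, dt,$$
which is the weak form of $\tfrac{1}{J}\partial_t h = \Delta_\Gamma \mu$ in $H^{-1}_\#(Q)$.

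\emph{Initial condition and energy decay.} The equality $h(\cdot,0) = h_0$ is immediate from $h_n(\cdot,0) \equiv h_0$ and the $C^{0,\alpha}([0,T];L^2(Q))$ convergence of Theorem~\ref{th:pre-pippa}. For \eqref{ex0}--\eqref{3000}, the piecewise-constant function $t \mapsto F(h_{i,n}, u_{i,n})$ on $[(i{-}1)\tau_n, i\tau_n)$ is non-increasing by \eqref{31.5}, so Helly's selection theorem yields a subsequence converging pointwise to a non-increasing $g: [0,T_0]\to \R$. At every continuity point of $g$ the convergences of Theorem~\ref{th:pippa}(i),(iii) and Corollary~\ref{cor:conv} give $F(h_n(\cdot,t), u_n(\cdot,t)) \to F(h(\cdot,t), u_h(\cdot,t))$, whence \eqref{ex0} off the at most countable jump set $Z_0$; on $Z_0$, lower semicontinuity of $F$ combined with monotonicity yields \eqref{3000}. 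The main technical obstacle throughout is pairing weakly convergent $D^2h_n$ with the curvature nonlinearity, and it is precisely the interpolation-based $L^r_t L^\infty_x$ strong convergence of $|H_n|^{p-2}H_n$ that is just strong enough to absorb the $L^\infty_t L^p_x$ bound on the Hessian.
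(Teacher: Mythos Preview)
Your argument follows the same overall architecture as the paper's---pass to the limit in the discrete Euler--Lagrange identity to identify the chemical potential, then pass to the limit in the Laplace--Beltrami relation \eqref{vacca} to obtain the evolution law---but you streamline two of the steps. First, for the cross terms of the form $|\tilde H_n|^{p-2}\tilde H_n\, D^2\tilde h_n$ you use the interpolated strong convergence $|\tilde H_n|^{p-2}\tilde H_n\to |H|^{p-2}H$ in $L^r(0,T_0;L^\infty(Q))$ (which does follow from Corollary~\ref{cor:conv} and Theorem~\ref{th:5} as you claim) and pair it with the weak $L^p_{t,x}$ convergence of $D^2\tilde h_n$; the paper instead argues a.e.\ in $t$ via a weak-$L^p$/strong-$L^2$ pairing in $x$ followed by dominated convergence in $t$. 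Second, for item~(ii) your observation that testing \eqref{vacca} with $\vphi\circ\pi$ makes the area factor cancel---leaving $\int_Q\partial_t h_n\,\vphi$ on the right and coefficients depending only on $Dh_{i-1,n}$ on the left---lets you pass to the limit using only the $C^0_tC^{1,\alpha}_x$ convergence of Theorem~\ref{th:pippa}(i) together with the weak $L^2_tH^1_x$ convergence of $\tilde\mu_n$. The paper takes a longer route here: it first upgrades to \emph{strong} convergence $D^2\tilde h_n\to D^2h$ in $L^p(Q\times(0,T_0))$ (see \eqref{ex5}) so as to show $\vphi/\tilde J_n\to\vphi/J$ in $L^2_tH^1_x$ and hence \eqref{ex6}. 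Your formulation avoids this detour.

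One imprecision in your energy-decay paragraph: Corollary~\ref{cor:conv} gives convergence of $\int_Q|\tilde H_n(\cdot,t)|^p\,dx$ only in $L^1(0,T_0)$, not pointwise at the continuity points of $g$. The paper therefore extracts a further subsequence along which $\tilde H_n(\cdot,t)\to H(\cdot,t)$ in $L^p(Q)$ for a.e.\ $t$, which then yields $F(\tilde h_n(\cdot,t),\tilde u_n(\cdot,t))\to F(h(\cdot,t),u_h(\cdot,t))$ for a.e.\ $t$; combined with the Helly limit this gives \eqref{ex0} with a null (not merely countable) exceptional set $Z_0$. Replace ``at every continuity point of $g$'' by ``for a.e.\ $t$'' and your argument goes through.
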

This result motivates the following definition.
\begin{definition}\label{def:solvar}
We say that a solution to \eqref{geq} is  {\em variational} if it is the limit of  a subsequence of the minimizing movements scheme as in Theorem~\ref{th:pippa}(i). 
\end{definition}
\begin{proof}[Proof of Theorem~\ref{th:existence}]
Let $\tH_n$, $\tilde h_n$,  $\tilde J_n$ be the functions given in \eqref{H tilde}, and \eqref{tilde h n}. Set 
$\tilde W_n(x,t):=W(E(u_{i,n})(x, h_{i,n}(x)))$ and $\tilde v_n(x,t):=v_{h_{i,n}}(x)$  for   $t\in[(i-1)\tau_n, {i}\tau_n)$. 
Moreover, define $\hat v_n:=\frac{\tilde v_n}{\tau_n}$. Note that  for all $t$, $\hat v_n(\cdot, t)$ is the unique $Q$-periodic solution 
to
%%%%%%%%%%%%%change%%%%%%%%
{
\beq\label{vaccabis}
\begin{cases}
\displaystyle\Delta_{\Gamma_{\tilde h_n(\cdot, t-\tau_n)}}w=\frac{1}{\tilde J_n(\cdot, t-\tau_n)}\frac{\partial   h_n(\cdot, t)}{\partial t}\vspace{7pt}\\
\displaystyle\int_{\Gamma_{\tilde h_n(\cdot, t-\tau_n)}} w\, d\H^{2}=0\,.
\end{cases}
\eeq
}
%%%%%%%%%%%%%
Fix $t\in (0,T_0)$ and a sequence $(i_k, n_k)$
such that $t_k:={i_k}\tau_{n_k}\to t$. Summing \eqref{mostro} from $i=1$ to $i=i_k$, we get
\begin{align}
&\int_0^{t_k}\int_Q\tilde W_{n_k}\vphi\, dxdt+\int_0^{t_k}\int_QD\psi(-D\tilde h_{n_k},1)\cdot(-D\vphi, 0)\, dxdt+\frac{\e}p
\int_0^{t_k}\int_Q|\tH_{n_k}|^p\frac{D\tilde h_{n_k}\cdot D\vphi}{\tilde J_{n_k}}\, dx dt\nonumber\\
&\qquad-\e\int_0^{t_k} \int_{Q}|\tilde H_{n_k}|^{p-2}\tilde H_{n_k}\biggl[\Delta\vphi-
\frac{D^2\vphi[D\tilde h_{n_k},D\tilde h_{n_k}]}{\tilde J^2_{n_k}}\nonumber\\
&\qquad\qquad-\frac{\Delta \tilde h_{n_k}D\tilde h_{n_k}\cdot D\vphi}{\tilde J^2_{n_k}}-2\frac{D^2\tilde h_{n_k}[D\tilde h_{n_k}, D\vphi]}{\tilde J^2_{n_k}}+3
\frac{D^2\tilde h_{n_k}[D\tilde h_{n_k}, D\tilde h_{n_k}]D\tilde h_{n_k}\cdot D\vphi}{\tilde J^4_{n_k}}\biggr]\, dxdt\nonumber\\
&\qquad -\int_0^{t_k}\int_Q\hat v_{n_k}\vphi\, dxdt=0\,.\label{ex1} 
\end{align}
We claim that we can pass to the limit in the above equation to get
%%%%%%%%%%%%%change%%%%%%%%%%%
{
\begin{align}
&\int_0^{t}\int_Q W(E(u(x, h(x, s), s)))\vphi\, dxds+\int_0^{t}\int_QD\psi(-D h,1)\cdot(-D\vphi, 0)\, dxds\nonumber\\
&\qquad+\frac{\e}p
\int_0^{t}\int_Q|H|^p\frac{D h\cdot D\vphi}{ J}\, dx ds\nonumber\\
&\qquad-\e\int_0^{t} \int_{Q}| H|^{p-2} H\biggl[\Delta\vphi-
\frac{D^2\vphi[D h,Dh]}{ J^2}\nonumber\\
&\qquad\qquad-\frac{\Delta hDh\cdot D\vphi}{ J^2}-2\frac{D^2 h[Dh, D\vphi]}{J^2}+3
\frac{D^2 h[Dh, Dh]Dh\cdot D\vphi}{ J^4}\biggr]\, dxds\nonumber\\
&\qquad -\int_0^{t}\int_Q\hat v\vphi\, dxds=0\,,\label{ex2} 
\end{align}
}
%%%%%%%%%%%%%%%%%%%%%%%%%
where $\hat v(\cdot, t)$ is the unique periodic solution in $H^{1}_\#(\Gamma(t))$ to
\beq\label{vaccafinal}
\begin{cases}
\displaystyle\Delta_{\Gamma_{h(\cdot, t)}}w=\frac{1}{J(\cdot, t)}\frac{\partial h(\cdot, t)}{\partial t}\,,\vspace{7pt}\\
\displaystyle\int_{\Gamma_{ h(\cdot, t)}} w\, d\H^{2}=0
\end{cases}
\eeq
for a.e. $t\in (0,T_0)$.
To prove the claim, observe that the convergence of the first two integrals in \eqref{ex1} immediately follows from (i) and (iii) of Theorem~\ref{th:pippa}. The convergence of the third integral \eqref{ex1} follows from  \eqref{conv1} and (i) of Theorem~\ref{th:pippa}. Similarly  \eqref{conv1} and (i) of Theorem~\ref{th:pippa} imply that  
%%%%%%%%%%%%%%%%%%%change%%%%%%%%%%%
{
$$
\int_0^{t_k} \int_{Q}|\tilde H_{n_k}|^{p-2}\tilde H_{n_k}\biggl[\Delta\vphi-
\frac{D^2\vphi[D\tilde h_{n_k},D\tilde h_{n_k}]}{\tilde J^2_{n_k}}\biggr]\, dxdt\to 
\int_0^{t} \int_{Q}| H|^{p-2} H\biggl[\Delta\vphi-
\frac{D^2\vphi[Dh,D h]}{ J^2}\biggr]\, dxds\,.
$$
}
%%%%%%%%%%%%%%%
Next we show the convergence of
\begin{multline*}
\int_0^{t_k} \int_{Q}|\tilde H_{n_k}|^{p-2}\tilde H_{n_k}\biggl[-\frac{\Delta \tilde h_{n_k}D\tilde h_{n_k}\cdot D\vphi}{\tilde J^2_{n_k}}\\-2\frac{D^2\tilde h_{n_k}[D\tilde h_{n_k}, D\vphi]}{\tilde J^2_{n_k}}+3
\frac{D^2\tilde h_{n_k}[D\tilde h_{n_k}, D\tilde h_{n_k}]D\tilde h_{n_k}\cdot D\vphi}{\tilde J^4_{n_k}}\biggr]\, dxdt
\end{multline*}
to the corresponding term in \eqref{ex2}. To this purpose, we only show that 
%%%%%%%%%%%%%change%%%%%%%%%
\beq\label{ex3}
\int_0^{t_k} \int_{Q}|\tilde H_{n_k}|^{p-2}\tilde H_{n_k}\frac{\Delta \tilde h_{n_k}D\tilde h_{n_k}\cdot D\vphi}{\tilde J^2_{n_k}}\, dxdt\to \int_0^{t} \int_{Q}|H|^{p-2} H\frac{\Delta hD h\cdot D\vphi}{J^2}\, dxds
\eeq
%%%%%%%%%%%%%%%
since the convergence of the other terms can be shown in a similar way. To prove \eqref{ex3}, we first observe that by  \eqref{ghnboundbis} and Theorem~\ref{th:pippa}(i)  we have
$ \Delta \tilde h_{n_k}(\cdot, t)\wto \Delta h(\cdot, t)$ in $L^p(Q)$  for all $t\in (0, T_0)$. On the other hand, \eqref{conv1} yields that for a.e. $t\in (0, T_0)$ we have
 $(\tilde H_{n_k}|^{p-2}\tilde H_{n_k})(\cdot, t)\to (|H|^{p-2}H)(\cdot, t)$ in $L^2(Q)$. Therefore, for a.e. $t\in (0, T_0)$
 $$
  \int_{Q}|\tilde H_{n_k}|^{p-2}\tilde H_{n_k}\frac{\Delta \tilde h_{n_k}D\tilde h_{n_k}\cdot D\vphi}{\tilde J^2_{n_k}}\, dx\to \int_{Q}|H|^{p-2} H\frac{\Delta hD h\cdot D\vphi}{J^2}\, dx\,.
 $$
The conclusion then follows by applying the Lebesgue dominated convergence theorem after observing that by \eqref{hn} and \eqref{ghnboundbis},
\begin{align*}
\biggl| \int_{Q}|\tilde H_{n_k}|^{p-2}\tilde H_{n_k}\frac{\Delta \tilde h_{n_k}D\tilde h_{n_k}\cdot D\vphi}{\tilde J^2_{n_k}}\, dx\biggr| & \leq
C\|\Delta \tilde h_{n_k}\|_{L^2(Q)}\||\tilde H_{n_k}|^{p-2}\tilde H_{n_k}\|_{L^2(Q)}\\
&\leq C\||\tilde H_{n_k}|^{p-2}\tilde H_{n_k}\|_{L^2(Q)}
\end{align*}
and that $\||\tilde H_{n_k}|^{p-2}\tilde H_{n_k}\|_{L^2(Q)}$ converges in $L^1(0,T_0)$ thanks to \eqref{conv1}.

Note \eqref{conv1} implies that for a.e. $t\in (0, T_0)$ we have $\|\tilde H_{n_k}(\cdot, t)\|_{L^p(Q)}\to \|H(\cdot, t)\|_{L^p(Q)}$. Since 
$\tilde H_{n_k}(\cdot, t)\wto H(\cdot, t)$ in $L^p(Q)$ (see \eqref{conv2}), we may conclude that $\tilde H_{n_k}(\cdot, t)\to H(\cdot, t)$ in $L^p(Q)$ for a.e. $t\in (0, T_0)$. Therefore, by \eqref{div eq} and \cite[Lemma 7.2]{AFM}, we also have $\tilde h_{n_k}(\cdot, t)\to h(\cdot, t)$ in 
$W^{2,p}_\#(Q)$   for a.e. $t\in (0, T_0)$. Thus, by \eqref{hn} and \eqref{ghnboundbis} and the Lebesgue dominated convergence theorem  we infer
that
\beq\label{ex5}
\int_0^{T_0}\int_Q|D^2\tilde h_{n_k}-D^2h|^p\, dxdt\to 0\,.
\eeq
This, together with the fact that  $h_n\wto h$ weakly in  $H^1(0,T_0; H^{-1}_{\#}(Q))$ (see \eqref{32}), implies that 
\beq\label{ex6}
\frac{1}{\tilde J_{n_k}(\cdot, \cdot-\tau_{n_k})}\frac{\partial h_{n_k}}{\partial t}\wto \frac{1}{J}\frac{\partial h}{\partial t}\quad\text{in $L^2(0,T_0; H^{-1}_\#(Q))$.}
\eeq
%%%%change%%%%change%%%
{
Indeed, for any $\vphi\in L^2(0,T_0; H^1_\#(Q))$,
}
%%%%%%%%%%%%%%%%%%%%%
\begin{align}
&\biggl|\int_0^{T_0}\int_Q\Bigl(\frac{1}{\tilde J_{n_k}(\cdot, \cdot-\tau_{n_k})}\frac{\partial h_{n_k}}{\partial t} - \frac{1}{J}\frac{\partial h}{\partial t}\Bigr)\vphi\,dxdt\biggr|\nonumber\\ 
&   \leq \biggl|\int_0^{T_0}\int_Q\Bigl(\frac{1}{\tilde J_{n_k}(\cdot, \cdot-\tau_{n_k})}- \frac{1}{J}\Bigr)\frac{\partial h_{n_k}}{\partial t}\vphi\,dxdt\biggr|+\biggl|\int_0^{T_0}\int_Q\Bigl(\frac{\partial h_{n_k}}{\partial t}-\frac{\partial h}{\partial t}\Bigr)\frac{\vphi}{J}\,dxdt\biggr|\nonumber\vspace{5pt}\\
&\leq \int_0^{T_0}\int_Q\Bigl\|\frac{\partial h_{n_k}}{\partial t}\Bigr\|_{H^{-1}}\Bigl\|\frac{\vphi}{\tilde J_{n_k}(\cdot, \cdot-\tau_{n_k})}- \frac{\vphi}{J}\Bigr\|_{H^{1}}\,dxdt+\biggl|\int_0^{T_0}\int_Q\Bigl(\frac{\partial h_{n_k}}{\partial t}-\frac{\partial h}{\partial t}\Bigr)\frac{\vphi}{J}\,dxdt\biggr|\,.
\label{ex7}
\end{align}
Since $H^1_\#(Q)$ is embedded in $L^q(Q)$ for all $q\geq 1$, we deduce from \eqref{ex5} that 
$\frac{\vphi}{\tilde J_{n_k}(\cdot, \cdot-\tau_{n_k})}\to \frac{\vphi}{J}$ in $L^2(0,T_0; H^1_\#(Q))$. This convergence together with \eqref{31} shows that the second last integral in \eqref{ex7} vanishes in the limit. On the other hand, also the last integral in \eqref{ex7} vanishes in the limit since $h_{n_k}\wto h$ weakly in  $H^1(0,T_0; H^{-1}_{\#}(Q))$. Thus, \eqref{ex6} follows.

Arguing as in the proof of Theorem~\ref{th:5} and integrating with respect to $t$, we have from \eqref{vaccabis},
%%%%%%%%%%%%change%%%%%%%% 
{
\beq\label{ex8}
\int_0^{t}\int_QA_{n_k}D\hat v_{n}\cdot D\vphi\, dxds=\int_0^t\int_Q\frac{1}{\tilde J_{n_k}(\cdot, \cdot-\tau_{n_k})}\frac{\partial h_{n_k}}{\partial t}\vphi\, dxds
\eeq
}
%%%%%%%%%%%%%%%%
for all $\vphi\in L^2(0,T_0; H^1_\#(Q))$, where
$$
A_{n_k}(x,t):=\biggl(I- \frac{D\tilde h_{n_k}(\cdot, \cdot-\tau_{n_k})\otimes D\tilde h_{n_k}(\cdot, \cdot-\tau_{n_k})}{\tilde J_{n_k}(\cdot, \cdot-\tau_{n_k})^2}\biggr)\tilde J_{n_k}(\cdot, \cdot-\tau_{n_k})\,.
$$ 
Note that \eqref{noconstraint} implies that $A_{n_k}(x,t)$ is an elliptic matrix with ellipticity constants depending only on $\Lambda_0$ for all $(x,t)$. Therefore, \eqref{ex8} immediately implies that
$$
\int_0^{T_0}\int_Q|D\hat v_{n_k}|^2\, dxdt\leq c\int_0^{T_0}\Bigl\|\frac{\partial h_{n_k}}{\partial t}\Bigr\|_{H^{-1}}^2\, dt\leq c
$$
thanks to \eqref{31}. Since $A_{n_k}\to A:=(I- \frac{Dh\otimes Dh}{ J^2})J$ in $L^\infty(0,T_0; L^{\infty}(Q))$ by Theorem \ref{th:pippa}(i),  from the estimate above and recalling \eqref{ex6} and \eqref{ex8}, we conclude that 
$$
\hat v_{n_k}\wto \hat v \quad\text{weakly in $L^2(0, T_0; H^1_\#(Q))$, }
$$
where $\hat v$ satisfies 
%%%%%%%%%%%%%change%%%%%%%
{
$$
\int_0^{t}\int_QAD\hat v\cdot D\vphi\, dxds=\int_0^t\int_Q\frac{1}{J}\frac{\partial h}{\partial t}\vphi\, dxds
$$
}
%%%%%%%%%%%
for all $\vphi\in L^2(0,T_0; H^1_\#(Q))$ and for all $t\in (0, T_0)$. In turn, 
letting $\vphi$ vary in a countable dense subset 
of $H^{1}_\#(Q)$ and 
differentiating the above equation with respect to $t$, we  conclude that for a.e. $t\in (0, T_0)$
 $\hat v(\cdot, t)$ is the unique solution in $H^{1}_\#(\Gamma_{h(\cdot, t)})$ to \eqref{vaccafinal} for a.e. $t\in (0, T_0)$. This  shows that the last integral in \eqref{ex1} converges  and thus  \eqref{ex2} holds. Again letting $\vphi$ vary in a countable dense subset 
of $H^{1}_\#(Q)$ and 
differentiating \eqref{ex2} with respect to $t$ we obtain
\begin{align}
&\int_Q W(E(u(x, h(x, t), t)))\vphi\, dx+\int_QD\psi(-D h,1)\cdot(-D\vphi, 0)\, dx+\frac{\e}p
\int_Q|H|^p\frac{D h\cdot D\vphi}{ J}\, dx \nonumber\\
&\qquad-\e \int_{Q}| H|^{p-2} H\biggl[\Delta\vphi-
\frac{D^2\vphi[D h,Dh]}{ J^2}\nonumber\\
&\qquad\qquad-\frac{\Delta hDh\cdot D\vphi}{ J^2}-2\frac{D^2 h[Dh, D\vphi]}{J^2}+3
\frac{D^2 h[Dh, Dh]Dh\cdot D\vphi}{ J^4}\biggr]\, dx\nonumber\\
&\qquad -\int_Q\hat v\vphi\, dx=0\label{ex2bis} 
\end{align}
for all $\vphi\in H^{1}_\#(Q)$.
Since, by \eqref{51}, $|H|^{p-2}H\in L^2(0,T_0; H^2_\#(Q))$, arguing as in Step 2 of the proof of Theorem~\ref{th:5}, we have that the above equation is equivalent to 
\begin{align*}
&\e\int_{\Gamma_{h}}D_{\Gamma_{h}}(|H|^{p-2}H)D_{\Gamma_{h}}\phi\, d\H^2
-\e\int_{\Gamma_{h}}|H|^{p-2}H\Bigl(|B|^2-\frac1pH^2\Bigr)\phi\, d\H^2\nonumber\\
&+\int_{\Gamma_{h}}\bigl[\Div_{\Gamma_{h}}(D\psi(\nu))
+W(E(u))\bigr]\phi\, d\H^2
-\int_{\Gamma_{h}}\hat v\phi\, d\H^2=0
\end{align*}
for a.e. $t\in (0, T_0)$, where $\phi:=\frac{\vphi}{J}$. This equation, together with \eqref{vaccafinal}, implies that $h$ is a  solution to \eqref{geq} in the sense of Definition~\ref{def:weaksol}.

Next, to show that the energy decreases during the evolution, we observe first that for every $n$ the map 
$t\mapsto F(\tilde h_n(\cdot, t), \tilde u_n(\cdot, t))$ is non increasing, as shown in \eqref{31.5}. 
Note also that thanks to \eqref{conv1} we may assume, up to extracting a further subsequence, that for a.e. $t$, 
$\tH_n\to H$ in $L^p(Q)$. This fact, together with (i) and (iii) of Theorem~\ref{th:pippa},  implies that for all such $t$, $F(\tilde h_n(\cdot, t), \tilde u_n(\cdot, t))\to F( h(\cdot, t),  u(\cdot, t))$. Thus also \eqref{ex0} follows. Let $t\in Z_0$ and choose $t_n\to t^+$, with $t_n\not\in Z_0$ for every $n$. Finally, since $h(\cdot, t_n)\wto h(\cdot, t)$ weakly in 
$W^{2,p}_\#(Q)$ by \eqref{ghnboundbis}, by lower semicontinuity we get that 
$$
F(h(\cdot, t), u(\cdot, t))\leq \liminf_n F(h(\cdot, t_n), u(\cdot, t_n))=\lim_n g(t_n)=g(t+)\,.
$$
\end{proof}
%\section{Uniqueness}
%TO BE DONE
\section{Liapunov stability of the flat configuration}\label{sec:liapunov}
In this section we are going to study the Liapunov stability of an admissible flat configuration. 
Take  $h(x)\equiv d>0$ and let $u_d$ denote the corresponding elastic equilibrium.  Throughout this section we assume that the Dirichlet datum $w_0$ is affine, i.e., of the form $w_0(x,y)=(A[\, x\,], 0)$ for some $A\in \mathbb{M}^{2\times 2}$. As already mentioned, a tipical choice is given by $w_0(x,y):=(e^1_0 x_1, e_0^2 x_2,0)$, where the vector $e_0:=(e_0^1, e_0^2)$, with $e_0^1$, $e_0^2>0$, embodies the mismatch between the crystalline lattices of film and substrate.  

A detailed analysis of the so-called 
Asaro-Tiller-Grinfeld morphological stability/instability was undertaken in \cite{Bo, FM09}. It was shown that if $d$ is sufficiently small, then the flat configuration $(d, u_d)$ is a volume constrained local minimizer for the functional 
\beq\label{Gg}
G(h,u):=\int_{\Omega_h}W(E(u))\,dz+\int_{\Gamma_h}\psi(\nu)\, d\H^{2}\,.
\eeq
To be precise, it was proved that  if $d$ is small enough, then the second variation $\pa^2 G(d, u_d)$ is positive definite and that, in turn, this implies the local minimality property.
 In order to state the results of this section, we need to introduce some preliminary notation.
 In the following, given $h\in C^2_\#(Q)$, $h\geq 0$, $\nu$ will denote the unit vector field coinciding with the gradient of  the  signed distance from $\Om_h^\#$, which is well defined in a sufficiently small tubular neighborhood of $\Gamma_h^\#$.  Moreover,  for every $x\in\Gamma_h$ we set
\begin{equation}\label{sff}
\mathbb{B}(x):=D\nu(x).
\end{equation}
Note that the bilinear form associated with $\mathbb{B}(x)$ is symmetric and, when restricted to
$T_x \Gamma_h{\times}T_x \Gamma_h$, it coincides with the {\em second fundamental form
of $\Gamma_h$ at $x$}. Here $T_x \Gamma_h$ denotes the tangent space to $\Gamma_h$ at $x$. 
For $x\in \Gamma_h$ we also set $H(x):=\Div \nu(x)={\rm trace\,}\mathbb{B}(x)$, which is the {\em sum of the principal curvatures} of $\Gamma_h$ at $x$.
Given a (sufficiently) smooth and positively 1-homogeneous function $\omega:\R^N\setminus\{0\}\to\R$,
we consider the {\em anisotropic second fundamental form} defined as
$$
\mathbb{B}^{\omega}:=  D ( D \omega \circ \nu )
$$
and we  set
\begin{equation}\label{defHpsi}
H^{\omega}:= \text{trace} \, \mathbb{B}^{\omega} = \Div\, ( D \omega  \circ \nu ).
\end{equation}
 We also introduce the following space of periodic displacements
 \beq\label{adiomega}
A(\Om_h):=\{u\in LD_\#(\Om_h;\R^3):\, u(x,0)=0\}\,.
\eeq
 Given a regular configuration $(h, u_h)\in X$ with $h\in C^2_\#(Q)$
and $\vphi\in \widetilde H_\#^1(Q)$, where  
 \beq\label{h1tilde}
\widetilde H_\#^1(Q):=\biggl\{\vphi\in H^1_\#(Q):\, \int_Q\vphi\, dx=0\biggr\}\,,
\eeq
we recall that the second variation of $G$ at $(h, u_h)$ with respect to the direction $\vphi$ is 
$$
\frac{d^2}{dt^2}G(h+t\vphi, u_{h+t\vphi})|_{t=0}\,,
$$
where, as usual,  $u_{h+t\vphi}$ denotes the elastic equilibrium in  $\Om_{h+t\vphi}$. 
It turns out (see \cite[Theorem~4.1]{Bo}) that
\begin{multline}\label{secvar}
\frac{d^2}{dt^2}G(h+t\vphi, u_{h+t\vphi})|_{t=0}= \partial^2 G(h,  u_h)[\vphi] \\
- \int_{\Gamma_h}( W (E(u_h)) +H^{\psi}) \,
\Div_{\Gamma_h} \left[ \left( \frac{(D h, |D h|^2 )} {\sqrt{1 + |D h|^2}} \circ \pi \right) \phi^2\right] \, d \H^2\,,
\end{multline}
where $\partial^2 G(h,  u_h)[\vphi]$ is the (non local) quadratic form defined as
\begin{align}
\partial^2 G(h, u_h)[\vphi] := &-2\int_{\Om_h}W(E(v_{\phi}))\, dz+\int_{\Gamma_h}D^2\psi(\nu)[D_{\Gamma_h}\phi, D_{\Gamma_h}\phi]\, d\H^2\nonumber\\
&+ \int_{\Gamma_h} (\partial_\nu [ W(E(u_h) ] - \textnormal{trace} (\mathbb{B}^{\psi}\mathbb{B}) ) \, \phi^2\,  d\H^2 \label{ps2G}\,,
\end{align}
$$
\phi:=\frac{\vphi}{\sqrt{1+|D h|^2}}\circ \pi\,,
$$
and
$v_{\phi}$   the unique solution in $A(\Omega_h)$ to
\begin{equation} \label{vf}
\int_{\Omega_h}\C E(v_{\phi}) : E(w)\, dz = \int_{\Gamma_h}\Div_{\Gamma_h}(\phi\, \C E(u_h)) \cdot w \,d\H^2
\qquad \text {for all } w\in A(\Omega_h)\,.
\end{equation}
Note that if $(h, u_h)$ is a  critical pair of $G$ (see Definition \ref{def:critical} with $\e=0$), then the integral in \eqref{secvar} vanishes so that 
$$
\frac{d^2}{dt^2}G(h+t\vphi, u_{h+t\vphi})|_{t=0}= \partial^2 G(h,  u_h)[\vphi]\,.
$$
%In order to state the result proved in \cite{Bo},  we fix $\chi\in C^{\infty}([0, +\infty))$ such that $0\leq\chi\leq 1$,
% $\chi(y)=0$ if $y\leq \frac{d}3$, $\chi(y)=1$ if $y\geq \frac{d}2$. Set $\eta:=\min\{\frac{d}2, \frac{1}{\|\chi'\|_{\infty}}\}$. Given a function $k\in W^{2,p}_\#(Q)$, such that $\|k-d\|_{\infty}<\eta$, we denote by  
%  $\Phi_k : \overline\Om_d^\#\to \overline \Om_k^\#$ the map
%  $$
%  \Phi_k(x,y):=(x, y+\chi(y)(k(x)-d))\,.
%  $$
%  It is easily checked that the condition  $\|k-d\|_{\infty}<\eta$ yields that $\Phi_k$ is a $W^{2,p}$ diffeomorphism from 
%  $\overline\Om_d^\#$ to $\overline \Om_k^\#$. 
%For all such $k$ we may consider the following distance $(L^{\infty}\times W^{1,\infty})$-``distance'' of a
%pair $(k,v)\in X$, with $v\in W^{1,\infty}(\Om_k^{\#};\R^3)$, from the flat configuration $(d, u_d)$:
%$$
%d(k,v):=\|k-d\|_{C^1_\#(\overline Q)}+\|(Dv)\circ\Phi_k-Du_d\|_{L^{\infty}(\Om_d^\#; \mathbb{M}^{3\times 3})}\,.
%$$
Throughout this section $\alpha$ will denote a fixed  number in the interval $(0, 1-\frac2p)$.
The next result is a simple consequence of \cite[Theorem 6.6]{Bo}. 

 \begin{theorem}\label{th:bonacini}
 Assume that the surface density $\psi$ is of class $C^3$ away from the origin, it satisfies \eqref{sotto}, and the following convexity condition holds: %%%%%%%%%%%%%%%%%change%%%%%%%%%%%
{
for every $\xi\in S^2$,
\beq\label{uc}
D^2\psi(\xi)[w, w]>0 \qquad\text{for all  $w\perp \xi$, $w\neq 0$.}
\eeq
}
%%%%%%%%%%%%%%%%%%%
If 
 \beq\label{bo1}
 \partial^2G(d, u_d)[\vphi]>0\qquad\text{for all $\vphi\in \widetilde H_\#^{1}(Q)\setminus\{0\}$}\,,
 \eeq
 then 
% for every $M>\|Du_d\|_\infty$ 
 there exists $\de>0$  such that 
 $$
 G(d, u_d)<G(k,v)
 $$
 for all $(k,v)\in X$, with  $|\Om_k|=|\Om_d|$,   $0<\|k-d\|_{C^{1,\alpha}_\#(Q)} \leq \de$.  
 \end{theorem}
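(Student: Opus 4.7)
The plan is to reduce the statement to the local minimality criterion established in \cite[Theorem 6.6]{Bo}, whose hypotheses are (i) the strict convexity assumption \eqref{uc} on $\psi$, (ii) criticality of the reference configuration for the functional $G$, and (iii) strict positivity of $\partial^2 G$ on $\widetilde H^1_\#(Q)\setminus\{0\}$. Conditions (i) and (iii) are precisely the hypotheses of the present theorem, so the only point requiring verification is criticality of the flat pair $(d,u_d)$, together with a check that the topology in which \cite[Theorem 6.6]{Bo} provides isolated local minimality is no stronger than $C^{1,\alpha}_\#(Q)$.

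First I would verify that $(d, u_d)$ is a critical pair for $G$ in the sense of Definition~\ref{def:critical} with $\e=0$. Indeed, for $h\equiv d$ one has $\nu\equiv e_3$ on $\Gamma_d$, so that $\Div_{\Gamma_d}(D\psi(\nu))\equiv 0$. Since $w_0$ is affine and  $\Om_d=Q\times(0,d)$ is translation invariant in the $x$-variable, the elastic equilibrium $u_d$ is itself affine, and therefore $W(E(u_d))$ is constant on $\Gamma_d$. Hence $\Div_{\Gamma_d}(D\psi(\nu))+W(E(u_d))$ is a constant on $\Gamma_d$, whose integral against any $\phi\in H^1_\#(\Gamma_d)$ with zero mean vanishes. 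As a direct consequence, the boundary integral in \eqref{secvar} also vanishes when $(h,u_h)=(d,u_d)$, so that the second variation reduces to $\partial^2 G(d,u_d)[\vphi]$ for all $\vphi\in\widetilde H^1_\#(Q)$; this is what \eqref{bo1} requires to be positive.

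Next I would apply \cite[Theorem 6.6]{Bo} directly: since assumptions (i)--(iii) are met, there exists $\de_0>0$ and a neighborhood $\mathcal U$ of $d$ in the natural topology used in \cite{Bo} such that
\[
G(d,u_d) < G(k,v)
\]
for every admissible $(k,v)\in X$ with $|\Om_k|=|\Om_d|$ and $k\in\mathcal U\setminus\{d\}$. The neighborhood $\mathcal U$ in \cite[Theorem 6.6]{Bo} is defined through a $C^{1,\alpha}$-smallness condition on $k-d$ (with $\alpha$ in the range $(0, 1-2/p)$, dictated by the Sobolev embedding $W^{2,p}_\#(Q)\hookrightarrow C^{1,1-\frac{2}{p}}_\#(Q)$), so the quantitative statement of the present theorem is just a restatement of the conclusion of \cite[Theorem 6.6]{Bo} in that topology: choose $\de>0$ so that $\{k:\|k-d\|_{C^{1,\alpha}_\#(Q)}\le\de\}\subset\mathcal U$.

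The only potential subtlety is ensuring that \cite[Theorem 6.6]{Bo} applies in the present setting with the \emph{anisotropic} surface energy density $\psi$; this is handled in \cite{Bo}, where the  analysis is carried out precisely for such anisotropies satisfying \eqref{uc}. I do not anticipate any essential obstacle, since the argument is by construction a direct citation of an existing result once criticality of the flat configuration and the identification of the second variation with $\partial^2 G(d,u_d)$ have been established as above.
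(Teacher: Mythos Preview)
Your overall strategy of invoking \cite[Theorem~6.6]{Bo} is correct, and your verification that $(d,u_d)$ is critical is fine (cf.\ Remark~\ref{remark critical}). However, you have misread the conclusion of \cite[Theorem~6.6]{Bo}: that result does \emph{not} assert strict local minimality among all $(k,v)\in X$ with $k$ close to $d$. As invoked in the paper, it only gives
\[
G(d,u_d)<G(k,\eta)
\]
for competitors $(k,\eta)\in X$ with $0<\|k-d\|_{C^1_\#(Q)}\le\de_0$ \emph{and} satisfying the additional gradient bound $\|D\eta\|_\infty\le 1+\|Du_d\|_\infty$. Your plan applies the conclusion to arbitrary $v$, which the cited theorem does not furnish.

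The paper closes this gap in two steps, and this is precisely where the $C^{1,\alpha}$ topology (rather than $C^1$) enters. First, by Schauder-type elliptic regularity (see Lemma~\ref{lm:chepallequadrate}), one can choose $\de\in(0,\de_0)$ so small that whenever $\|k-d\|_{C^{1,\alpha}_\#(Q)}\le\de$, the elastic equilibrium $u_k$ satisfies $\|Du_k\|_\infty\le 1+\|Du_d\|_\infty$; this requires $C^{1,\alpha}$ control of the profile, not merely $C^1$. Second, one applies \cite[Theorem~6.6]{Bo} with $\eta=u_k$ to obtain $G(d,u_d)<G(k,u_k)$, and then uses the minimality of $u_k$ in $\Om_k$ to conclude $G(k,u_k)\le G(k,v)$ for every admissible $v$. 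You should add these two steps; without them the argument is incomplete and the role of the $C^{1,\alpha}$ norm is unexplained.
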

 \begin{proof}
%%%%%%%%%%%%%%%%%change%%%%%%%%%%%
{
 By condition \eqref{bo1} and \cite[Theorem 6.6]{Bo} there exists $\de_0>0$ such that if  $0<\|k-d\|_{C^1_\#(Q)} \leq \de_0$ and $\|D\eta\|_\infty\leq 1+\|Du_d\|_{\infty}$, with $(k, \eta)\in X$, then
\beq\label{fon1}
 G(d, u_d)<G(k,\eta)\,.
\eeq
Note that we may choose $0<\de< \de_0$ such that if  $\|k-d\|_{C^{1,\alpha}_\#(Q)}\leq \de$ and $u_k$ is the elastic equilibrium corresponding to $k$, by elliptic regularity (see also Lemma~\ref{lm:chepallequadrate}) we have that
 $\|Du_k\|_{\infty}\leq 1+\|Du_d\|_{\infty}$. Therefore, using \eqref{fon1} with $\eta:=u_k$, we may conclude that 
$$
 G(d, u_d)< G(k,u_k)\leq  G(k,v)\,,
 $$
where in the last inequality we used the minimality of $u_k$, and  the result follows. 
}
%%%%%%%%%%%%%%%%%%%
 \end{proof}

\begin{remark}\label{remark-bonacini}
%%%%%%%%%%%%%%%%%change%%%%%%%%%%%
{
It can be shown that Theorem~\ref{th:bonacini} continues to hold if  \eqref{uc} is replaced by the weaker condition 
\beq\label{weaker}
D^2\psi(e_3)[w, w]>0 \qquad\text{for all $w\perp e_3$, $w\neq 0$.}
\eeq
Indeed,  \eqref{weaker} implies that \eqref{uc} holds for all $\xi$ belonging to a suitable neighborhood $U\subset S^2$  of $e_3$.
In turn, by choosing $\delta$ sufficiently small we can ensure that the outer unit normals to $\Gamma_k$ lie in $U$, provided 
$\|k-d\|_{C^{1, \alpha}_\#(Q)}<\de$. A careful inspection of the proof of \cite[Theorem 6.6]{Bo} shows that under these circumstances condition \eqref{uc} is only needed to hold at vectors $\xi\in U$.
}
%%%%%%%%%%%%%%%%%%%%% 
\end{remark}

 \begin{remark}\label{uc2}
 Under assumption \eqref{uc}, it can be shown that \eqref{bo1} is equivalent to having (see \cite[Corollary 4.8]{Bo})
 \beq\label{emme0}
 \inf\{\partial^2G(d, u_d)[\vphi]:\, \vphi\in \widetilde H_\#^1(Q),\, \|\vphi\|_{H^1_\#(Q)}=1 \}=:m_0>0\,,
 \eeq
i.e., 
 $$
 \partial^2G(d, u_d)[\vphi]\geq m_0 \|\vphi\|_{H^1_\#(Q)}^2\qquad\text{for all $\vphi\in \widetilde H_\#^{1}(Q)$}\,.
 $$
 \end{remark}

%%%%%%%%%%%change%%%%%%%%%% 
{
 \begin{remark}\label{remark critical}
Note that  if  the profile $h\equiv d$ is flat, then the corresponding elastic equilibrium $u_d$ is affine. It immediately follows that $(d,u_d)$ is a critical pair in the sense of  Definition~\ref{def:critical}.
 \end{remark}
}
%%%%%%%%%%%%%%%%
 We now consider the case of a non-convex surface energy density $\psi$, and introduce the 
 ``relaxed'' functional defined for all $(h,u)\in X$ as
 \beq\label{Ggr}
\overline G(h,u):=\int_{\Omega_h}W(E(u))\,dz+\int_{\Gamma_h}\psi^{**}(\nu)\, d\H^{2}\,,
 \eeq
 where $\psi^{**}$ is the convex envelope of $\psi$.
 It turns out that if the boundary of the Wulff shape $W_\psi$ associated with the nonconvex density $\psi$ contains a flat horizontal facet, then the flat configuration is always an isolated volume-constrained local minimizer, irrespectively of the value of $d$.  We recall that the Wulff shape $W_\psi$  is given by (see \cite[Definition 3.1]{Fo})
 $$
 W_{\psi}:=\{z\in \R^3:\, z\cdot\nu<\psi(\nu)\text{ for all }\nu\in S^{2}\}\,.
 $$
 The following result can be easily obtained from \cite[Theorem 7.5 and Remark 7.6]{Bo} arguing as in the last part of the proof of Theorem~\ref{th:bonacini}. 
 \begin{theorem}\label{th:bonacini2}
 Let $\psi:\R^3\to [0, +\infty)$ be a Lipschitz positively one-homogeneous function, satisfying \eqref{sotto},  and let $\{(x,y)\in \R^3:\, |x|\leq \alpha, y=\beta\}\subset\pa W_{\psi}$ for some $\alpha$, $\beta>0$. Then  there exists $\de>0$  such that  
 $$
 \overline G(d, u_d)<\overline G(k,v)
 $$
 for all $(k,v)\in X$, with  $|\Om_k|=|\Om_d|$,  $0<\|k-d\|_{C^{1, \alpha}_\#(Q)} \leq \de$.  
 \end{theorem}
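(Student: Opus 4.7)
The plan is to mimic the proof of Theorem~\ref{th:bonacini} almost verbatim, replacing the convex second-variation input by the facet-based static minimality result from \cite{Bo}. First I would invoke \cite[Theorem 7.5 and Remark 7.6]{Bo}, which asserts, under exactly the hypothesis that $W_\psi$ contains a horizontal facet of the prescribed form, that there exist $\delta_0>0$ and $M_0>0$ such that
\[
\overline G(d, u_d) < \overline G(k, \eta)
\]
for every $(k,\eta)\in X$ with $|\Om_k|=|\Om_d|$, $0<\|k-d\|_{C^1_\#(Q)}\leq \delta_0$, and $\|D\eta\|_{L^\infty}\leq M_0$. I will take $M_0:=1+\|Du_d\|_{L^\infty}$, as in the proof of Theorem~\ref{th:bonacini}.

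Next I would strengthen the neighborhood from $C^1$ to $C^{1,\alpha}$ in order to control the elastic equilibrium. By standard Schauder/elliptic regularity for the linear elasticity system on Lipschitz graph domains (see the reference to \cite[Proposition 8.9]{FM09} used in the proof of Theorem~\ref{th:pippa}, together with Lemma~\ref{lm:chepallequadrate}), the $C^{0,\alpha}$-norm of $Du_k$ on $\overline{\Om_k}$ is controlled by the $C^{1,\alpha}$-norm of $k$. Therefore I can pick $\delta\in(0,\delta_0]$ small enough so that $\|k-d\|_{C^{1,\alpha}_\#(Q)}\leq \delta$ forces $\|Du_k\|_{L^\infty}\leq M_0$.

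With this choice, for any admissible competitor $(k,v)\in X$ with $|\Om_k|=|\Om_d|$ and $0<\|k-d\|_{C^{1,\alpha}_\#(Q)}\leq \delta$, applying the result of \cite{Bo} with $\eta:=u_k$ yields $\overline G(d, u_d) < \overline G(k, u_k)$. Finally, since only the bulk elastic term in $\overline G$ depends on the displacement, and since $u_k$ is by definition the minimizer of the elastic energy on $\Om_k$ among competitors compatible with the boundary condition, we have $\overline G(k,u_k)\leq \overline G(k,v)$. Chaining these two inequalities gives the conclusion.

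There is no real obstacle beyond invoking the right black box: the delicate part—showing that a flat facet of $W_\psi$ prevents the Asaro--Grinfeld--Tiller instability and forces strict static local minimality of the flat profile for the relaxed functional—is done inside \cite[Theorem 7.5]{Bo}. The only thing to be careful about in transplanting that theorem here is the passage from the restricted class of competitors (with a priori bound on $\|D\eta\|_{L^\infty}$) to the full class, which is handled exactly as in Theorem~\ref{th:bonacini} by the elliptic regularity step above.
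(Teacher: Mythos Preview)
Your proposal is correct and follows exactly the approach indicated by the paper: invoke \cite[Theorem~7.5 and Remark~7.6]{Bo} as the black box replacing the second-variation input, then repeat the last part of the proof of Theorem~\ref{th:bonacini} (elliptic regularity to control $\|Du_k\|_\infty$ in a $C^{1,\alpha}$-neighborhood, apply the black box with $\eta=u_k$, and conclude by minimality of $u_k$).
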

 In the next two subsections we use the previous theorems to study the Liapunov stability of the flat configuration both in the convex and nonconvex case.
 \begin{definition}\label{def:liapunov}
We say that the flat configuration $(d, u_d)$ is \emph{Liapunov stable} if for every $\sigma>0$, there exists $\de(\sigma)>0$ such that if $(h_0, u_0)\in X$
%%%%%%%%change%%%%%
{
 with  $|\Om_{h_0}|=|\Om_d|$ 
}
%%%%%%%%%%%%%%%%%%%%%
  and $\|h_0-d\|_{W^{2,p}_\#(Q)}\leq \de(\sigma)$, then every variational solution $h$ to \eqref{geq}  according to Definition~\ref{def:solvar}, with initial datum $h_0$, exists for all times and $\|h(\cdot, t)-d\|_{W^{2,p}_\#(Q)}\leq \sigma$ for all $t>0$.
\end{definition}

\subsection{The case of a non-convex surface density.}\label{sub:nc}
In this subsection will show that if the boundary of the Wulff shape $W_\psi$ associated with  $\psi$ contains a flat horizontal facet, then the flat configuration is always Liapunov stable. 
\begin{theorem}\label{th:bonacinievol}
Let $\psi:\R^3\to [0, +\infty)$ be a positively one-homogeneous function of class $C^2$ away from the origin, such that \eqref{sotto} holds, and let $\{(x,y)\in \R^3:\, |x|\leq \alpha, y=\beta\}\subset\pa W_{\psi}$ for some $\alpha$, $\beta>0$.
Then for every $d>0$ the flat configuration $(d, u_d)$ is Liapunov stable (according to Definition~\ref{def:liapunov}).
\end{theorem}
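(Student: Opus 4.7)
The plan is to argue by contradiction, combining the strict local minimality of the flat configuration for the relaxed energy $\overline{G}$ provided by Theorem~\ref{th:bonacini2} with the energy dissipation along variational solutions established in Theorem~\ref{th:existence}. Since $\partial W_\psi$ contains a horizontal facet one has $\psi^{**}(e_3) = \psi(e_3)$, and since $H\equiv 0$ on $\Gamma_d$ we obtain $F(d, u_d) = \overline{G}(d, u_d)$. Applying Theorem~\ref{th:bonacini2} I would fix $\delta_1 > 0$ such that $\overline{G}(d, u_d) < \overline{G}(k, v)$ for every $(k, v) \in X$ with $|\Omega_k| = |\Omega_d|$ and $0 < \|k - d\|_{C^{1,\alpha}_\#(Q)} \leq \delta_1$.

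\textbf{First step ($C^{1,\alpha}$-stability).} I would show that for every $\eta \in (0, \delta_1]$ there exists $\delta_2(\eta) > 0$ such that any variational solution with $\|h_0 - d\|_{W^{2,p}_\#(Q)} \le \delta_2$ can be extended to $[0, +\infty)$ and satisfies $\|h(\cdot, t) - d\|_{C^{1,\alpha}_\#(Q)} \le \eta$ for every $t > 0$. Suppose this fails. Choosing $\Lambda_0$ in \eqref{Lambda0} with $\Lambda_0 > 2\eta$ and using the time-continuity in $C^{1,\alpha}_\#$ from Theorem~\ref{th:pippa}(i), a diagonal restart of the minimizing movements scheme allows one to continue the solution through any time at which $\|Dh\|_\infty$ is strictly below $\Lambda_0$. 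Hence one extracts initial data $h_0^k \to d$ in $W^{2,p}_\#(Q)$, variational solutions $h^k$, and first stopping times $\tau_k > 0$ with $\|h^k(\cdot, \tau_k) - d\|_{C^{1,\alpha}_\#(Q)} = \eta$. By the energy monotonicity \eqref{ex0}--\eqref{3000} and the $W^{2,p}$-continuity of $F$ at $(d, u_d)$,
\[
F(h^k(\cdot, \tau_k), u^k(\cdot, \tau_k)) \le F(h_0^k, u_0^k) \to F(d, u_d) = \overline{G}(d, u_d).
\]
This uniformly bounds $\|H^k(\cdot, \tau_k)\|_{L^p}$; combined with $\|Dh^k(\cdot, \tau_k)\|_\infty \le \eta$ and Lemma~\ref{lm:morini} it yields a uniform $W^{2,p}_\#$ bound on $h^k(\cdot, \tau_k)$. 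Extracting a subsequence, $h^k(\cdot, \tau_k) \rightharpoonup h^*$ weakly in $W^{2,p}_\#$ and strongly in $C^{1,\alpha}_\#$, so $\|h^* - d\|_{C^{1,\alpha}_\#} = \eta \in (0, \delta_1]$. The lower-semicontinuity arguments used in the proof of Theorem~\ref{th:incremental} then give
\[
\overline{G}(h^*, u_{h^*}) \le \liminf_k \overline{G}(h^k(\cdot, \tau_k), u^k(\cdot, \tau_k)) \le \liminf_k F(h^k(\cdot, \tau_k), u^k(\cdot, \tau_k)) \le \overline{G}(d, u_d),
\]
which contradicts the strict minimality fixed above.

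\textbf{Second step ($W^{2,p}$-stability).} Apply the first step with $\eta = \delta_1$ to ensure $\|h(\cdot, t) - d\|_{C^{1,\alpha}_\#} \le \delta_1$ for all $t$; Theorem~\ref{th:bonacini2} then implies $\overline{G}(h(\cdot, t), u_{h(\cdot, t)}) \ge \overline{G}(d, u_d)$ for every $t$. Decomposing
\[
F = \overline{G} + \int_{\Gamma_h}(\psi - \psi^{**})(\nu)\, d\H^2 + \frac{\varepsilon}{p}\int_{\Gamma_h}|H|^p\, d\H^2,
\]
each increment over $\overline{G}(d, u_d)$ being non-negative, the energy dissipation yields
\[
\frac{\varepsilon}{p}\int_{\Gamma_{h(\cdot, t)}} |H|^p\, d\H^2 \le F(h_0, u_0) - F(d, u_d) \qquad \text{for all } t > 0.
\]
By the $W^{2,p}_\#$-continuity of $F$ at $(d, u_d)$ the right-hand side is uniformly small when $\|h_0 - d\|_{W^{2,p}_\#}$ is small, giving uniform smallness of $\|H(\cdot, t)\|_{L^p(Q)}$. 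Combining this with the $C^{1,\alpha}$-smallness of $Dh(\cdot, t)$ from Step~1, using \eqref{div eq} together with standard elliptic regularity (absorbing the quadratic correction $D^2 h[Dh, Dh]/(1+|Dh|^2)$ into the left-hand side), yields uniform smallness of $\|D^2 h(\cdot, t)\|_{L^p(Q)}$, and hence $\|h(\cdot, t) - d\|_{W^{2,p}_\#(Q)} \le \sigma$.

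\textbf{Main obstacle.} The delicate point is extending the variational solution past the local existence time $T_0$ of Theorem~\ref{th:pippa}: one must verify that the concatenation of successive runs of the minimizing movements scheme remains a variational solution in the sense of Definition~\ref{def:solvar}. This is achieved by a diagonal selection argument taking advantage of the fact that $\|Dh(\cdot, t)\|_\infty$ stays strictly below $\Lambda_0$ uniformly in time thanks to the stopping-time bound, so that Theorem~\ref{th:pippa} can be iterated with the same parameters at each restart.
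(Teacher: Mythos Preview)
Your proof is correct and follows essentially the same strategy as the paper's: reduce to strict $C^{1,\alpha}$-local minimality of $F$ at $(d,u_d)$ via the chain $F(d,u_d)=\overline G(d,u_d)<\overline G(k,v)\le F(k,v)$ from Theorem~\ref{th:bonacini2}, combine this with energy dissipation to obtain $C^{1,\alpha}$-stability, and then upgrade to $W^{2,p}$-stability through the resulting bound on $\int_{\Gamma_h}|H|^p$ and Lemma~\ref{lm:morini}.

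The one organizational difference worth pointing out concerns your ``main obstacle''. The paper avoids any restart or concatenation argument by exploiting the fact that the limit function $h$ of Theorem~\ref{th:pre-pippa} is already defined for \emph{all} $t\ge 0$, together with the global energy bound \eqref{32.5}. Instead of your sequential stopping-time argument, the paper introduces the modulus
\[
\omega(\tau):=\sup\bigl\{\|k-d\|_{C^{1,\alpha}_\#(Q)}:\ (k,v)\in X,\ |\Om_k|=|\Om_d|,\ \|k-d\|_{C^{1,\alpha}_\#(Q)}\le\delta,\ F(k,v)-F(d,u_d)\le\tau\bigr\},
\]
proves $\omega(\tau)\to 0$ by exactly the compactness/lower-semicontinuity argument you use in Step~1, and then works with a single trajectory: setting $T_1:=\sup\{t:\|h(\cdot,s)-d\|_{C^{1,\alpha}}\le\delta_0\text{ for }s<t\}$, the bound \eqref{32.5} forces $\|h(\cdot,T_1)-d\|_{C^{1,\alpha}}\le\omega(F(h_0,u_0)-F(d,u_d))\le\delta_0/2$, whence \eqref{qq1} and Remark~\ref{rm:elle0} give a contradiction to the maximality of $T_1$. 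This packaging makes the continuation step a one-line consequence of Remark~\ref{rm:elle0} rather than a separate diagonal construction; your approach is equivalent, but the paper's is tidier and makes clearer why no genuine restart of the scheme is required.
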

\begin{proof}
We start by observing that from the assumptions on $\psi$, $e_3$ is normal to boundary $\pa W_\psi$ of the Wulff shape $W_\psi$ associated with $\psi$. Thus, by \cite[Proposition 3.5-(iv)]{Fo} it follows that $\psi(e_3)=\psi^{**}(e_3)$.  In turn, by Theorem~\ref{th:bonacini2}, we may find $\de>0$ such that 
\beq\label{bonacini21}
 F(d, u_d)=\overline G(d, u_d)<\overline G(k,v)\leq F(k,v)
 \eeq
  for all $(k,v)\in X$, with  $|\Om_k|=|\Om_d|$ and  $0<\|k-d\|_{C^{1,\alpha}_\#(Q)}\leq \de$.  Fix $\sigma>0$ and choose $\de_0\in (0,\min\{\de, \sigma/2\})$ so small that 
 \beq\label{qq1}
\|h-d\|_{C^{1,\alpha}_\#(Q)}\leq \de_0\Longrightarrow \| Dh\|_\infty<\Lambda_0\,,
\eeq
where $\Lambda_0$ is as in \eqref{pin}.  
For every $\tau>0$ set
%%%%%%%%%%%%%change%%%%%%%%%%%
{
\begin{equation*}
\omega(\tau):=\sup\bigl\{\|k-d\|_{C^{1,\alpha}_\#(Q)}\bigr\}
\end{equation*}
where the supremum is taken over all $(k, v)\in X$ such that
$$
 |\Om_k|=|\Om_d|\,,\quad \|k-d\|_{C^{1,\alpha}_\#(Q)}\leq\de\,,
\quad\text{and } F(k, v)-F(d, u_d)\leq \tau\,.
$$
}
%%%%%%%%%%%%%%%%
Clearly, $\omega(\tau)>0$ for $\tau>0$. We claim that $\omega(\tau)\to 0$ as $\tau\to 0^+$. Indeed, to see this we assume by contradiction that there exists a sequence $(k_n, v_n)\in X$, with $|\Om_{k_n}|=|\Om_d|$, such that
%%%%%%%%%%change%%%%%%%
{
\begin{equation}
\liminf_nF(k_n, v_n)\le F(d, u_d)\quad\text{and}\quad  0<c_0\leq \|k_n-d\|_{C^{1,\alpha}_\#(Q)}\leq \de \label{3001}
\end{equation}
}
%%%%%%%%%%%%%%%%%%%%
 for some $c_0>0$. By Lemma~\ref{lm:morini}, up to a subsequence, we may assume that 
$k_n\wto k$ in $W^{2,p}_\#(Q)$ and that $v_n\wto v$ in $H^1_{\loc}(\Om_k; \R^3)$ for some $(k,v)\in X$ satisfying
$\de\geq \|k-d\|_{C^{1,\alpha}_\#(Q)}\geq c_0$, since $W^{2,p}_\#(Q)$ is compactly embedded in $C^{1,\alpha}_\#(Q)$. By lower semicontinuity we also have that 
%%%%%%%%%%change%%%%%%%%%
{
$$
F(k,v)\leq \liminf_nF(k_n, v_n)\le F(d, u_d)\,,
$$
}
%%%%%%%%%%%%%%%%%%%
which contradicts \eqref{bonacini21}.

Let $\delta(\sigma)$ so small that if $\|h_0-d\|_{W_\#^{2,p}(Q)}\leq \de(\sigma)$ then
$$
\|h_0-d\|_{C^{1,\alpha}_\#(Q)}<\de_0\quad\text{and } F(h_0,u_0 )-F(d,u_d)\leq \omega^{-1}(\de_0/2)\,,
$$
where $\omega^{-1}$ is the generalized inverse of $\omega$ defined as 
$\omega^{-1}(s):=\sup\{\tau>0:\, \omega(\tau)\leq s\}$ for all $s>0$. Note that since $\omega(\tau)>0$ for $\tau>0$ and $\omega(\tau)\to 0$ as $\tau\to 0+$ we have that $\omega^{-1}(s)\to 0$ as $s\to 0+$.
Let $h$ be a variational solution as in  Theorem~\ref{th:pre-pippa} (see Definition \ref{def:solvar}).
Let 
$$
T_1:=\sup\{t>0:\, \|h(\cdot, s)-d\|_{C^{1,\alpha}_\#(Q)}\leq \de_0\quad\text{for all } s\in (0,t)\}\,.
$$
Note that by Theorem~\ref{th:pippa}, $T_1>0$. We claim that $T_1=+\infty$.
Indeed, if $T_1$ were finite, then, recalling \eqref{32.5}, we would get for all $s\in [0, T_1]$
\beq\label{qq2}
F(h(\cdot, T_1), u_{h(\cdot,T_1)})-F(d, u_d)\leq F(h_0, u_0)-F(d, u_d)\leq \omega^{-1}(\de_0/2)\,,
\eeq
which implies $\| h(\cdot, T_1)-d\|_{C^{1,\alpha}_\#(Q)}\leq \de_0/2$ by the definition of $\omega$. Then, equation \eqref{qq1},  Remark~\ref{rm:elle0}, and Theorem~\ref{th:pippa} would imply that there exists $T>T_1$ such that
$\| h(\cdot, t)-d\|_{C^{1,\alpha}_\#(Q)}\leq \de_0$ for all $t\in (T_1, T)$, thus giving a contradiction.
We conclude that $T_1=+\infty$ and that $\| h(\cdot, t)-d\|_{C^{1,\alpha}_\#(Q)}\leq \de_0$ for all $t>0$.
Therefore, \eqref{qq1} implies that  $\| Dh(\cdot,t)\|_{\infty}<\Lambda_0$ for all times, which, together with  Remark~\ref{rm:elle0}, gives
that $h$ is a solution to \eqref{geq} for all times.
Moreover, by \eqref{qq2} we have also shown that $F(h(\cdot, t), u_{h(\cdot, t)})-F(d, u_d)\leq \omega^{-1}(\de_0/2)$ for all $t>0$, which by \eqref{bonacini21} implies that 
$$
\e\int_{\Gamma_{h(\cdot, t)}}|H|^p\, d\mathcal{H}^2\leq  \omega^{-1}(\de_0/2)\,.
$$
Using elliptic regularity (see  \eqref{div eq}), this inequality and  the fact that $\|h(\cdot, t)-d\|_{\infty}\leq \sigma/2$ for all $t>0$ imply that 
$\|h(\cdot, t)-d\|_{W^{2,p}_\#(Q)}\leq \sigma$ provided that $\de_0$ and in turn $\de(\sigma)$ are chosen sufficiently small.
\end{proof}

\subsection{The case of a convex surface density.} \label{sub:c} 
In this section we will show that, under the  convexity assumption \eqref{uc}, the condition $\pa^2 G(d, u_d)>0$ implies that $(d, u_d)$ is asymptotically stable for the regularized evolution equation \eqref{geq} (see Theorem~\ref{th:botto} below). We start by addressing the Liapunov stability (see Definition~\ref{def:liapunov}).
\begin{theorem}\label{th:boli}
Assume that the surface density $\psi$ satisfies the assumptions of Theorem~\ref{th:bonacini} and that the flat configuration  $(d, u_d)$ satisfies \eqref{bo1}. Then  $(d, u_d)$ is Liapunov stable.
\end{theorem}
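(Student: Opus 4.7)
The strategy mirrors the proof of Theorem~\ref{th:bonacinievol} almost verbatim, with Theorem~\ref{th:bonacini} supplying the static local-minimality input in place of Theorem~\ref{th:bonacini2}. Since $\psi$ is $C^3$ off the origin, satisfies the convexity condition \eqref{uc}, and $\partial^2 G(d,u_d)$ is positive definite, Theorem~\ref{th:bonacini} furnishes $\delta>0$ such that
$$
F(d,u_d) \,=\, G(d,u_d) \,<\, G(k,v) \,\leq\, F(k,v)
$$
for every $(k,v)\in X$ with $|\Om_k|=|\Om_d|$ and $0<\|k-d\|_{C^{1,\alpha}_\#(Q)}\leq \delta$. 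Here the first equality uses $H\equiv 0$ on the flat profile and the last inequality uses $\tfrac{\e}{p}|H|^p\geq 0$.

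Given $\sigma>0$, I would fix $\delta_0\in(0,\min\{\delta,\sigma/2\})$ small enough that $\|h-d\|_{C^{1,\alpha}_\#(Q)}\leq\delta_0$ forces $\|Dh\|_\infty<\Lambda_0$, and introduce the modulus
$$
\omega(\tau):=\sup\bigl\{\|k-d\|_{C^{1,\alpha}_\#(Q)}:\ (k,v)\in X,\ |\Om_k|=|\Om_d|,\ \|k-d\|_{C^{1,\alpha}_\#(Q)}\leq\delta,\ F(k,v)-F(d,u_d)\leq\tau\bigr\}.
$$
The crucial step is showing $\omega(\tau)\to 0^+$ as $\tau\to 0^+$. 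The argument is by contradiction: if $(k_n,v_n)\in X$ satisfies $|\Om_{k_n}|=|\Om_d|$, $\liminf_n F(k_n,v_n)\leq F(d,u_d)$ and $\|k_n-d\|_{C^{1,\alpha}_\#(Q)}\geq c_0>0$, then the uniform bound on $\int|H_n|^p$ together with Lemma~\ref{lm:morini} gives a $W^{2,p}_\#$ bound on $k_n$, hence a subsequential limit $k$ in $C^{1,\alpha}_\#(Q)$ with $\|k-d\|_{C^{1,\alpha}_\#(Q)}\geq c_0$; reasoning as in Theorem~\ref{th:incremental} produces a corresponding $v$ with $(k,v)\in X$ and $F(k,v)\leq \liminf_n F(k_n,v_n)\leq F(d,u_d)$, contradicting the strict minimality established in the previous paragraph.

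With $\omega$ in hand, choose $\delta(\sigma)$ so small that $\|h_0-d\|_{W^{2,p}_\#(Q)}\leq \delta(\sigma)$ implies both $\|h_0-d\|_{C^{1,\alpha}_\#(Q)}<\delta_0$ and $F(h_0,u_0)-F(d,u_d)\leq\omega^{-1}(\delta_0/2)$, where $\omega^{-1}(s):=\sup\{\tau>0:\omega(\tau)\leq s\}$ (note $\omega^{-1}(s)\to 0$ as $s\to 0^+$). For a variational solution $h$ starting from $h_0$, set
$$
T_1:=\sup\bigl\{t>0:\ \|h(\cdot,s)-d\|_{C^{1,\alpha}_\#(Q)}\leq\delta_0\ \text{for all }s\in(0,t)\bigr\}.
$$
Theorem~\ref{th:pippa} ensures $T_1>0$. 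I claim $T_1=+\infty$: otherwise, the energy monotonicity \eqref{32.5} gives $F(h(\cdot,T_1),u_{h(\cdot,T_1)})-F(d,u_d)\leq\omega^{-1}(\delta_0/2)$, so by the definition of $\omega$ we have $\|h(\cdot,T_1)-d\|_{C^{1,\alpha}_\#(Q)}\leq\delta_0/2$; combining this strict slack with Remark~\ref{rm:elle0} and the local extension of Theorem~\ref{th:pippa} contradicts maximality of $T_1$. Hence $h$ exists for all times and $\|h(\cdot,t)-d\|_{C^{1,\alpha}_\#(Q)}\leq\delta_0\leq\sigma/2$ for every $t>0$.

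To upgrade closeness from $C^{1,\alpha}$ to $W^{2,p}$, observe that the same energy bound yields
$$
\e\int_{\Gamma_{h(\cdot,t)}}|H|^p\,d\H^2 \,\leq\, \omega^{-1}(\delta_0/2)\qquad\text{for all }t>0,
$$
and plugging this together with $\|Dh(\cdot,t)\|_\infty\leq \Lambda_0$ into Lemma~\ref{lm:morini} (via \eqref{div eq}) controls $\|D^2 h(\cdot,t)\|_{L^p}$; combined with $\|h(\cdot,t)-d\|_\infty\leq\sigma/2$ this gives $\|h(\cdot,t)-d\|_{W^{2,p}_\#(Q)}\leq\sigma$ provided $\delta(\sigma)$ is taken small enough. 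The only genuinely new step compared with the non-convex case is the compactness argument for $\omega(\tau)\to 0$, and the expected obstacle there is simply verifying that the weak limit of the minimizing sequence is itself an admissible element of $X$ together with lower semicontinuity of the elastic energy; both points follow along the lines of Theorem~\ref{th:incremental}.
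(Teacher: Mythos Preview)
Your proposal is correct and follows exactly the paper's approach: the paper's proof of Theorem~\ref{th:boli} is a one-line reduction stating that \eqref{bonacini21} holds with $\overline G$ replaced by $G$ thanks to Theorem~\ref{th:bonacini}, and then the proof of Theorem~\ref{th:bonacinievol} applies verbatim. You have simply written out that referenced argument in full, with the correct substitution of the minimality input.
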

\begin{proof}
Since \eqref{bonacini21} still holds with $\overline G$ replaced by $G$ in view of Theorem~\ref{th:bonacini}, we can conclude as in the proof of  Theorem~\ref{th:bonacinievol}.
\end{proof}
\begin{remark}[Stability of the flat configuration for small volumes]\label{rm:bonacini}
If the surface density $\psi$ satisfies the assumptions of Theorem~\ref{th:bonacini}, then there exists $d_0>0$ (depending only on Dirichlet boundary datum $w_0$) such that  \eqref{bo1} holds for all $d\in (0, d_0)$ (see \cite[Proposition 7.3]{Bo}).
\end{remark}
\begin{definition}\label{def:asymptotic}
We say that flat configuration $(d, u_d)$ is \emph{asymptotically stable} if there exists $\de>0$ such that if $(h_0, u_0)\in X$,
%%%%%%%%change%%%%%
{
with  $|\Om_{h_0}|=|\Om_d|$ 
}
%%%%%%%%%%%%%%%%%%%%%
   and $\|h_0-d\|_{W^{2,p}_\#(Q)}\leq \de$, then every variational solution  $h$ to \eqref{geq}  according to Definition~\ref{def:solvar}, with initial datum $h_0$,  exists for all times and $\|h(\cdot, t)-d\|_{W^{2,p}_\#(Q)}\to 0$ as $t\to+\infty$.
\end{definition}

We start by showing that if a variational solution to \eqref{geq} exists for all times, then there exists a sequence $\{t_n\}\subset (0,+\infty)$, with $t_n\to\infty$, such that $h(\cdot,t_n)$ converges to a critical profile (see Definition~\ref{def:critical}).
\begin{proposition}\label{th:pci}
Assume that for a certain initial datum $h_0\in W^{2,p}_\#(Q)$ there exists a global in time  variational  solution $h$. Then there exist a sequence $\{t_n\}\subset (0,+\infty)\setminus Z_0$,  where $Z_0$ is the set  in \eqref{ex0},  and a critical profile  $\bar h$ for $F$ such that $t_n\to\infty$ and $h(\cdot,t_n) \to\bar h$ strongly in $W^{2,p}_\#(Q)$. 
\end{proposition}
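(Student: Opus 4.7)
My plan is to select a sequence $t_n\to+\infty$ at which the chemical potential $\hat v(\cdot,t_n)$ becomes small, and then pass to the limit in the Euler--Lagrange equation \eqref{ex2bis} to recover a critical pair.

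First, passing the uniform bound \eqref{31} to the weak limit via $h_n\rightharpoonup h$ in $H^1(0,T;H^{-1}_\#(Q))$ (cf.\ \eqref{32}) and sending $T\to+\infty$, I obtain $\int_0^{+\infty}\|\partial_t h(\cdot,t)\|^2_{H^{-1}_\#(Q)}\,dt<+\infty$. Since $\hat v(\cdot,t)$ satisfies \eqref{vaccafinal} with a Laplace--Beltrami operator whose ellipticity constants are controlled uniformly by $\Lambda_0$ (via the global analogue of \eqref{noconstraint}), the quantity $\|D_\Gamma\hat v(\cdot,t)\|_{L^2(\Gamma_{h(\cdot,t)})}$ is comparable to $\|\partial_t h(\cdot,t)\|_{H^{-1}_\#(Q)}$, and I can extract $t_n\to+\infty$ with $t_n\notin Z_0$ and $\|D_\Gamma\hat v(\cdot,t_n)\|_{L^2(\Gamma)}\to 0$; the zero-mean condition together with Poincar\'e's inequality then yields $\|\hat v(\cdot,t_n)\|_{L^2(Q)}\to 0$. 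By \eqref{ghnboundbis}, extended to all times via the global existence of $h$, $\{h(\cdot,t_n)\}$ is bounded in $W^{2,p}_\#(Q)$, so up to a further subsequence $h(\cdot,t_n)\rightharpoonup\bar h$ weakly in $W^{2,p}_\#(Q)$ and strongly in $C^{1,\alpha}_\#(Q)$, and $H(\cdot,t_n)\rightharpoonup\bar H$ weakly in $L^p(Q)$, where $\bar H$ denotes the mean curvature of $\Gamma_{\bar h}$.

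The crucial step is to upgrade this to strong $W^{2,p}_\#$ convergence. My strategy is to reproduce Step~2 of the proof of Theorem~\ref{th:5} at the continuum level, starting from the intrinsic form of \eqref{ex2bis} (cf.\ \eqref{51bis}) with $\hat v(\cdot,t)$ in place of $v_{h_{i,n}}/\tau_n$. The bootstrap of Step~1 of that proof applies pointwise in time and yields $w(\cdot,t):=|H(\cdot,t)|^{p-2}H(\cdot,t)\in H^2_\#(Q)$ for a.e.\ $t$, after which the integrations by parts and interpolation inequalities of Step~2 deliver
\[
\int_Q|D^2 w(\cdot,t)|^2\,dx\le C\Bigl(1+\int_Q \hat v(\cdot,t)^2\,dx\Bigr),
\]
where $C$ depends only on $\Lambda_0$ and on the $C^{1,\alpha}$-bound on $u_{h(\cdot,t)}$ from Theorem~\ref{th:pippa}(ii). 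Evaluating at $t=t_n$, $\{w(\cdot,t_n)\}$ is bounded in $H^2_\#(Q)$; compactness gives (up to a subsequence) strong convergence in $H^1_\#(Q)$ and uniformly on $\overline Q$ to $\bar w=|\bar H|^{p-2}\bar H$. Inverting the homeomorphism $\xi\mapsto|\xi|^{p-2}\xi$ produces $H(\cdot,t_n)\to\bar H$ strongly in $L^p(Q)$, and subtracting the mean-curvature equations \eqref{div eq} for $h(\cdot,t_n)$ and $\bar h$ and invoking Lemma~\ref{lm:morini} upgrades this to $h(\cdot,t_n)\to\bar h$ strongly in $W^{2,p}_\#(Q)$.

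Finally, I pass to the limit in \eqref{ex2bis} at $t=t_n$: the elastic term converges by the strong $C^{1,\alpha}$ convergence of $h(\cdot,t_n)$ together with Lemma~\ref{lm:chepallequadrate}; the anisotropic surface term and the curvature-regularization terms converge by the strong $W^{2,p}_\#$ convergence of $h(\cdot,t_n)$ and the strong $H^1_\#$ convergence of $w(\cdot,t_n)$; and $\int_Q\hat v(\cdot,t_n)\vphi\,dx\to 0$ for every admissible test function. By Remark~\ref{rm:5} the limiting identity is exactly the Euler--Lagrange equation defining a critical pair (Definition~\ref{def:critical}), so $\bar h$ is a critical profile. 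The main obstacle is the pointwise-in-time bound on $\|D^2 w(\cdot,t)\|_{L^2}$: its derivation requires that the Step~1 bootstrap and the Step~2 integration-by-parts argument in Theorem~\ref{th:5} survive at the continuum, which reduces to rewriting \eqref{ex2bis} as the elliptic equation \eqref{schifoimmane} at a.e.\ fixed $t$, with $\tau_n^{-1}v_{h_{i,n}}$ replaced by $\hat v(\cdot,t)\in L^2(Q)$, after which the remaining estimates are unchanged.
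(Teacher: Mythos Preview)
Your proposal is correct and follows essentially the same approach as the paper's proof. Both arguments select $t_n\to+\infty$ (avoiding $Z_0$) along which $\|\partial_t h(\cdot,t_n)\|_{H^{-1}_\#(Q)}\to 0$, observe that this forces $\hat v(\cdot,t_n)\to 0$ in $H^1_\#(Q)$, then invoke the a priori estimate \eqref{1000} from the proof of Theorem~\ref{th:5} (with $\hat v$ replacing $\tau_n^{-1}v_{h_{i,n}}$) to get $\{|H(\cdot,t_n)|^{p-2}H(\cdot,t_n)\}$ bounded in $H^2_\#(Q)$, from which strong $W^{2,p}_\#$ convergence of $h(\cdot,t_n)$ and passage to the limit in the Euler--Lagrange equation follow; the paper cites \cite[Lemma~7.2]{AFM} rather than Lemma~\ref{lm:morini} for the upgrade from $H(\cdot,t_n)\to\bar H$ in $L^p$ to $h(\cdot,t_n)\to\bar h$ in $W^{2,p}_\#$, but your linearization-and-Lemma~\ref{lm:morini} route is equivalent.
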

\begin{proof}
From equation \eqref{31}, by lower semicontinuity we have that 
$$
\int_0^\infty\Bigl\|\frac{\partial h}{\partial t}\Bigr\|^2_{H^{-1}(Q)}dt\leq CF(h_0, u_0)\,.
$$
Since the set $Z_0$ has measure zero, we may find a sequence $\{t_n\}\subset (0,+\infty)\setminus Z_0$,  $t_n\to\infty$, such that  $\bigl\|\frac{\partial h(\cdot,t_n)}{\partial t}\bigr\|_{H^{-1}(Q)}\to0$. Since 
%%%%%%%%%%%%%change%%%%%%%%%
{
$h\in L^{\infty}(0,\infty; W^{2,p}_\#(Q))\cap H^1(0,\infty; H^{-1}_\#(Q))$, 
}
%%%%%%%%%%%%%%
setting $h^n=h(\cdot,t_n)$, we may also assume that there exists $\bar h\in W^{2,p}_\#(Q)$ such that $h^n\wto\bar h$ weakly in $W^{2,p}_\#(Q)$. In turn, denoting by $u_{h^n}$ the corresponding elastic equilibria, by elliptic regularity (see also Lemma~\ref{lm:chepallequadrate} ) we have that $u_{h^n}(\cdot, h^n(\cdot))\to u_{\bar h}(\cdot, \bar h(\cdot))$ in $C^{1,\alpha}_\#(Q;\R^3)$. Let $\hat v^n$ be the unique $Q$-periodic solution  to \eqref{vaccafinal} with $t=t_n$ and note that $\hat v^n\to0$ in $H^{1}_\#(Q)$ since $\bigl\|\frac{\partial h(\cdot,t_n)}{\partial t}\bigr\|_{H^{-1}(Q)}\to0$. Writing the equation satisfied by $h^n$ as in \eqref{mostro}, we have for all $\varphi\in C^2_\#(Q)$, with $\int_Q\varphi\,dx=0$,
\begin{align}
&\int_QW(E(u_{h^n}(x, h^n(x))))\vphi\, dx+\int_QD\psi(-Dh^n,1)\cdot(-D\vphi, 0)\, dx+\frac\e{p}
\int_Q|H^n|^p\frac{Dh^{n}\cdot D\vphi}{J^{n}}\nonumber\\
&\qquad-\e \int_{Q}|H^n|^{p-2}H^n\biggl[\Delta\vphi-
\frac{D^2\vphi[Dh^{n},Dh^{n}]}{(J^n)^2}\nonumber\\
&\qquad-\frac{\Delta h^{n}Dh^{n}\cdot D\vphi}{(J^n)^{2}}-2\frac{D^2h^{n}[Dh^{n}, D\vphi]}{(J^n)^{2}}+3
\frac{D^2h^{n}[Dh^{n}, Dh^{n}]Dh^{n}\cdot D\vphi}{(J^n)^{4}}\biggr]\, dx\nonumber\\
&\qquad -\int_Q\hat v^n\vphi\, dx=0\,,\label{mostrobis}
\end{align}
where $H^n$ stands for the sum of the principal curvatures of $h^n$ and $J^n=\sqrt{1+|Dh^n|^2}$. Arguing exactly as in the proof of Theorem~\ref{th:5}(see \eqref{1000}) we deduce  that 
\beq\label{pc1}
\int_Q|D^2(|H^n|^{p-2}H^n)|^2\,dx\leq C\int_Q(1+(\hat v^n)^2)\,dx\,
\eeq
for some constant $C$ independent of $n$. Thus, passing to a subsequence, if necessary, we may also assume that there exists 
%%%%%%%change%%%%
{
$w\in H^2_\#(Q)$ 
}
%%%%%%%%%%
such that $|H^n|^{p-2}H^n\wto w$ weakly in $H^2_\#(Q)$ and $|H^n|^{p-2}H^n\to w$ strongly in $H^1_\#(Q)$. Since $H^1_\#(Q)$ is continuously embedded in $L^q(Q)$ for every $1\le q<\infty$ by the Sobolev embedding theorem,  there exists  $z\in L^1(Q)$ such that $|H^n|^p\to z$ in $L^1(Q)$. The same argument used at the end of the proof of Corollary~\ref{cor:conv} shows that $z=|\bar H|^p$ and $w=|\bar H|^{p-2}\bar H$, where $\bar H$ is the sum of the principal curvatures of $\bar h$. 
\par
Using all the convergences proved above, and arguing as in the proof of Theorem~\ref{th:existence} we may pass to the limit in equation \eqref{mostrobis}, thus getting  that $\bar h$ is a critical profile by Remark~\ref{rm:5}.
\end{proof}
\begin{lemma}\label{lm:c2alfa}
Assume that \eqref{uc} and \eqref{bo1} hold. Then there exist $\sigma>0$ and $c_0>0$ such that
$$
\partial^2 G(h, u_h)[\vphi]\geq c_0 \|\vphi\|_{H^1_\#(Q)}^2\qquad\text{for all $\vphi\in \widetilde H_\#^{1}(Q)$}\,,
$$
provided
%%%%%%%%change%%%%%
%%%%%%%%%%%%%%%%%%%%%
  $\|h-d\|_{C^{2, \alpha}_\#(Q)}\leq \sigma$, where $\widetilde H_\#^{1}(Q)$ is defined in \eqref{h1tilde}.
\end{lemma}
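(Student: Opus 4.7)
The plan is to combine the strict coercivity of $\partial^2 G$ at the flat configuration with a uniform continuity-in-$h$ statement for the bilinear form $\vphi\mapsto \partial^2 G(h, u_h)[\vphi]$. By Remark~\ref{uc2}, the hypotheses \eqref{uc} and \eqref{bo1} yield
\[
\partial^2 G(d, u_d)[\vphi] \geq m_0 \|\vphi\|^2_{H^1_\#(Q)} \quad \text{for every } \vphi \in \widetilde H^1_\#(Q),
\]
with $m_0>0$. It therefore suffices to prove that for every $\eta>0$ there exists $\sigma>0$ such that
\[
\bigl|\partial^2 G(h, u_h)[\vphi] - \partial^2 G(d, u_d)[\vphi]\bigr| \leq \eta\, \|\vphi\|^2_{H^1_\#(Q)}
\]
for all $\vphi\in \widetilde H^1_\#(Q)$ whenever $\|h-d\|_{C^{2,\alpha}_\#(Q)}\leq \sigma$; taking $\eta = m_0/2$ then gives the claim with $c_0 = m_0/2$.

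To control the surface contributions in \eqref{ps2G}, I would introduce the family of diffeomorphisms $\Psi_h(x, y) := (x, y\, h(x)/d)$ mapping $\overline{\Om_d}$ onto $\overline{\Om_h}$. Standard elliptic regularity for the Lam\'e system (in the spirit of Lemma~\ref{lm:chepallequadrate}, applied to $u_h\circ \Psi_h$ solving an elliptic system on the fixed domain $\Om_d$ with coefficients that depend continuously on $h$ in $C^{1,\alpha}$) yields $u_h\circ \Psi_h \to u_d$ in $C^{1,\alpha}(\overline{\Om_d};\R^3)$ and, by interior Schauder estimates, in $C^{2,\alpha}_{\mathrm{loc}}$ near $\Gamma_d$, as $h\to d$ in $C^{2,\alpha}$. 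Consequently $W(E(u_h))|_{\Gamma_h}$ and $\partial_\nu W(E(u_h))$, together with $\nu$, $J=\sqrt{1+|Dh|^2}$, $\mathbb{B}$ and $\mathbb{B}^\psi$, all converge uniformly to their flat counterparts. Pulling the surface integrals back to $Q$ through the parametrization $\phi = \vphi/J\circ \pi$, they become integrals over $Q$ whose coefficients converge in $C^{0,\alpha}$; this delivers the required uniform estimate for the surface part of the quadratic form.

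The principal difficulty is the nonlocal bulk term $-2\int_{\Om_h} W(E(v_\phi))\,dz$, because $v_\phi$ is defined through the elliptic problem \eqref{vf} on the moving domain $\Om_h$. I would pull \eqref{vf} back via $\Psi_h$, obtaining a linear elasticity system on the fixed domain $\Om_d$ for $\tilde v_\phi := v_\phi\circ \Psi_h$, whose elasticity tensor depends continuously on $h$ in $C^{1,\alpha}$ and whose data on $\Gamma_d$ (the pulled-back version of $\Div_{\Gamma_h}(\phi\, \C E(u_h))$) is bounded in the appropriate dual norm by $c\|\vphi\|_{H^1_\#(Q)}$ uniformly in $h$. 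Korn's inequality on $A(\Om_d)$ gives well-posedness on the fixed domain, and a standard continuous-dependence argument for linear elliptic systems yields
\[
\|\tilde v_{\phi,h} - \tilde v_{\phi,d}\|_{H^1(\Om_d;\R^3)} \leq \omega(\sigma)\,\|\vphi\|_{H^1_\#(Q)}
\]
with $\omega(\sigma)\to 0$ as $\sigma\to 0^+$. Changing variables back to $\Om_h$ then produces the desired uniform control on the bulk contribution. The main obstacle I foresee is precisely this moving-domain control uniform in $\vphi$, and the pull-back construction reduces it to a textbook stability-of-coefficients estimate for a linear elliptic system on the fixed domain $\Om_d$.
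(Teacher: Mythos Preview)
Your proposal is correct and follows essentially the same route as the paper: both arguments establish that $\partial^2 G(h,u_h)[\vphi]-\partial^2 G(d,u_d)[\vphi]$ is $o(1)\|\vphi\|_{H^1}^2$ as $h\to d$ in $C^{2,\alpha}$, treating the surface terms via convergence of the geometric data and Schauder regularity for $u_h$, and the bulk term by pulling back $v_\phi$ through a diffeomorphism onto the fixed domain $\Om_d$ and invoking elliptic stability. The only cosmetic differences are that the paper packages this as a contradiction argument with sequences (rather than your direct uniform estimate) and uses diffeomorphisms that act as pure vertical translations near $\Gamma_d$, which slightly simplifies the boundary computations compared to your vertical scaling $\Psi_h$.
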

\begin{proof}
Let $m_0$ be the positive constant  defined in \eqref{emme0}. We claim that there exists $\sigma>0$ such that
$$
\inf\{\partial^2 G(h, u_h)[\vphi]:\, \vphi\in \widetilde H_\#^1(Q),\, \|\vphi\|_{H^1_\#(Q)}=1 \}\geq \frac{m_0}2\,,
$$
whenever 
%%%%%%%%%%%%%%%%%%%%%
$\|h-d\|_{C^{2, \alpha}_\#(Q)}\leq \sigma$. Indeed, if not, then  
there exist two  sequences $\{h_n\}\subset C^{2,\alpha}_\#(Q)$, with 
%%%%%%%%change%%%%%
%%%%%%%%%%%%%%%%%%%%%
 $h_n\to d$ in $C^{2,\alpha}_\#(Q)$,  and $\{\vphi_n\}\subset \widetilde H^1_\#(Q)$, with $\|\vphi_n\|_{H^1_\#(Q)}=1$, such that
\beq\label{absurd}
\partial^2G(h_n, u_{h_n})[\vphi_n]<\frac{m_0}2\,.
\eeq
Set 
\beq\label{phizero}
\phi_n:=\frac{\vphi_n}{\sqrt{1+|D h_n|^2}}\circ \pi\,,
\eeq
where we recall that $\pi(x,y)=x$. Let $v_{\phi_n}$ be the unique solution in $A(\Omega_{h_n})$, see \eqref{adiomega}, to
\begin{equation}\label{vufienne} 
\int_{\Omega_{h_n}}\C E(v_{\phi_n}) : E(w)\, dz = \int_{\Gamma_{h_n}}\Div_{\Gamma_{h_n}}(\phi_n\, \C E(u_{h_n})) \cdot w \,d\H^2
\qquad \text{for all }w\in A(\Omega_{h_n})\
\end{equation}
and let
 $v_{\vphi_n}$ be the unique solution in $A(\Omega_d)$ to
\begin{equation} \label{1001}
\int_{\Omega_d}\C E(v_{\vphi_n}) : E(w)\, dz = \int_{\Gamma_{d}}\Div_{\Gamma_{d}}(\vphi_n\, \C E(u_d)) \cdot w \,d\H^2
\qquad \text{for all } w\in A(\Omega_d)\,.
\end{equation}
 Observe that (see, e.g., Lemma~\ref{lm:chepallequadrate})
%%%%%%%%%%%change%%%%%%%
{
$$
\|\Div_{\Gamma_{h_n}}(\phi_n\, \C E(u_{h_n}))\|_{L^2(\Gamma_{h_n})}\leq C\|\vphi_n\|_{H^1_\#(Q)}
$$ 
}
%%%%%%%%%%%%%
for some constant $C>0$ depending only on 
$$
\sup_n(\|\C E(u_{h_n})\|_{C^1(\Gamma_{h_n})}+\|h_n\|_{C^2_\#(Q)})
$$
and thus independent of $n$. Therefore, 
choosing $w=v_{\phi_n}$ in \eqref{vufienne},
and using  Korn's inequality, we  deduce that 
\beq\label{vufienneb}
\sup_n\|v_{\phi_n}\|_{H^1(\Om_{h_n})}<+\infty\,.
\eeq
The same bound holds for the sequence $\{v_{\vphi_n}\}$.

Next we show that 
\beq\label{start}
\int_{\Om_{h_n}}W(E(v_{\phi_n}))\, dz-\int_{\Om_d}W(E(v_{\vphi_n}))\, dz\to 0
\eeq
as $n\to\infty$. Consider a sequence $\{\Phi_n\}$ of diffeomorphisms $\Phi_n: \Om_d\to \Om_{h_n}$ such that   $\Phi_n-Id$ is
$Q$-periodic with respect to $x$, $\Phi_n(x, y)=(x, y+ d-h_n(x))$ in a neighborhood of $\Gamma_d$, and  $\|\Phi_n- Id\|_{C^{2,\alpha}(\overline\Om_d; \R^3)}\leq C\|h_n-d\|_{C^{2, \alpha}_\#(Q)}\to 0$.
Set $w_n:=v_{\phi_n}\circ\Phi_n$. Changing variables, we get that $w_n\in A(\Om_d)$ satisfies
\begin{equation}\label{1002}
\int_{\Omega_d}A_nD w_n : D w \,dz
= \int_{\Gamma_d} \bigl(\Div_{\Gamma_{h_n}}(\phi_n\, \C E(u_{h_n})\bigr) \circ \Phi_{n} \bigr) \cdot w \; J_{\Phi_n} \,d\H^2
\end{equation}
for every $w\in A(\Om_d)$, where $J_{\Phi_n}$ stands for the $(N-1)$-Jacobian of $\Phi_n$ and the fourth order tensor valued functions $A_n$ satisfy $A_n\to \C$  in $C^{1,\alpha}(\overline \Om_d)$.  We claim that
\beq\label{claim0.0}
\int_{\Om_d} W(E(w_n-v_{\vphi_n}))\, dz\to 0
\eeq
as $n\to\infty$.
Note that this would immediately  imply
 $\int_{\Om_d} W(E(w_n))\, dz-\int_{\Om_d} W(E(v_{\vphi_n}))\, dz\to0$ and in turn, taking also into account that 
 $A_n\to \C$  uniformly and that $\frac12\int_{\Omega_d}A_nD w_n : D w_n \,dz=\int_{\Om_{h_n}}W(E(v_{\phi_n}))\, dz$, claim \eqref{start} would follow. In order to prove \eqref{claim0.0}, we write
 \begin{align*}
\int_{\Omega_d} \C &D(v_{{\vphi}_n}-w_n) : D(v_{{\vphi}_n}-w_n) \,dz \\
&= \int_{\Omega_d} \C D v_{{\vphi}_n} : D(v_{{\vphi}_n}-w_n) \,dz
- \int_{\Omega_d} (\C-A_n) D w_n : D(v_{{\vphi}_n}-w_n) \,dz \\
&\hspace{1cm}- \int_{\Omega_d} A_n D w_n : D(v_{{\vphi}_n}-w_n) \,dz \\
&= \int_{\Gamma_d} \Div_{\Gamma_d} ({\vphi}_n \C E(u_d)) \cdot (v_{{\vphi}_n}-w_n) \,d\H^2
- \int_{\Omega_d} (\C-A_n) D w_n : D(v_{{\vphi}_n}-w_n) \,dz \\
&\hspace{1cm}- \int_{\Gamma_d} \bigl( \Div_{\Gamma_{h_n}}(\phi_n \C E(u_{h_n}))\circ\Phi_{n} \bigr) \cdot (v_{{\vphi}_n}-w_n) J_{\Phi_n}\,d\H^2 \\
&=: I_1-I_2-I_3\,,
\end{align*}
where we used \eqref{1001} and \eqref{1002}. From  \eqref{vufienneb},  the analogous bound for the sequence $\{v_{\vphi_n}\}$, and the uniform convergence of $A_n$ to $\C$ we deduce that $I_2$ tends to $0$.

Fix $\,\eta=(\eta_1, \eta_2, \eta_3)\in C^1_\#(\Gamma_d;\R^3) \simeq C^1_\#(Q;\R^3)$. Using the fact that 
$\Phi^{-1}_n(x,y)=(x, y-h_n(x)+d)$ in a neighborhood of $\Gamma_{h_n}$ we have 
$$
D_{\Gamma_{h_n}}(\eta_j\circ\Phi^{-1}_{n})=  \bigl(I-\nu_{h_n}\otimes\nu_{h_n}\bigr)D_{\Gamma_d}\eta_j\circ\Phi_{n}^{-1}\,,
$$ 
where we set $\nu_{h_n}:=\frac{(-Dh_n, 1)}{\sqrt{1+|Dh_n|^2}}$. Using this fact, we then have by repeated integrations by parts and changes of variables,
\begin{align*}
\int_{\Gamma_d} \bigl( \Div_{\Gamma_{h_n}}&(\phi_n \C E(u_{h_n}))\circ\Phi_{n} \bigr) \cdot \eta \, J_{\Phi_n}d\H^2 \\
&= \int_{\Gamma_{h_n}} \Div_{\Gamma_{h_n}}(\phi_n\C E(u_{h_n})) \cdot \eta\circ\Phi^{-1}_{n} \,d\H^2 \\
&= - \int_{\Gamma_{h_n}} \phi_n \C E(u_{h_n}) : D_{\Gamma_{h_n}}(\eta\circ\Phi^{-1}_{n}) \,d\H^2 \\
&= - \int_{\Gamma_{h_n}} \bigl(I-\nu_{h_n}\otimes\nu_{h_n}\bigr)\phi_n \C E(u_{h_n}) : D_{\Gamma_d}\eta\circ\Phi_{n}^{-1}\ \,d\H^2 \\
&= - \int_{\Gamma_d}\bigl[\bigl(I-\nu_{h_n}\otimes\nu_{h_n}\bigr)\phi_n \C E(u_{h_n})\bigr]\circ\Phi_n : D_{\Gamma_d}\eta\, J_{\Phi_n}\,d\H^2 \\
&= \int_{\Gamma_d} \Div_{\Gamma_d} \Bigl[\bigl[\bigl(I-\nu_{h_n}\otimes\nu_{h_n}\bigr)\phi_n \C E(u_{h_n})\bigr]\circ\Phi_n J_{\Phi_n}\Bigr] \cdot \eta \,d\H^2.
\end{align*}
Hence, we may rewrite
\beq\label{itre}
I_1-I_3 = \int_{\Gamma_d} \Div_{\Gamma_d}g_n \cdot (v_{{\vphi}_n}-w_n) \,d\H^2,
\eeq
where by \eqref{phizero},
\begin{align*}
g_n & :=
 {\vphi}_n \C E(u_d)-\bigl[\bigl(I-\nu_{h_n}\otimes\nu_{h_n}\bigr)\phi_n \C E(u_{h_n})\bigr]\circ\Phi_n J_{\Phi_n}\\
 & ={\vphi}_n\left[\C E(u_d)- \bigl[\bigl(I-\nu_{h_n}\otimes\nu_{h_n}\bigr) \C E(u_{h_n})\bigr]\circ\Phi_n 
 \frac{J_{\Phi_n}}{\sqrt{1+|D h_n|^2}}\right]\,.
\end{align*}
Since $h_n\to d$ in $C^{2,\alpha}_\#(Q)$, by standard Schauder's estimates for
the elastic displacements $u_{h_n}$, we get
$$
\C E(u_d)- \bigl[\bigl(I-\nu_{h_n}\otimes\nu_{h_n}\bigr) \C E(u_{h_n})\bigr]\circ\Phi_n 
 \frac{J_{\Phi_n}}{\sqrt{1+|D h_n|^2}}\to 0 \qquad\text{in $C^{1,\alpha}(\Gamma_d)$}\,.
$$ 
Therefore, by  \eqref{itre} and the equiboundedness of $\{v_{\phi_n}\}$ and $\{w_n\}$ we have that 
 $I_1-I_3\to 0$.  
 This concludes the proof of \eqref{claim0.0} and, in turn, of \eqref{start}.

Finally, again from the $C^{2,\alpha}$-convergence of $\{h_n\}$ to $d$ and the fact that 
$$
\partial_\nu [ W(E(u_{h_n}) ]\circ\Phi_n\to  \partial_\nu [ W(E(u_d)) ]\quad\text{in $C^{0,\alpha}_\#(\Gamma_d)$}
$$
by standard Schauder's  elliptic estimates, recalling \eqref{ps2G} we easily infer  that 
\begin{multline}\label{infer}
\biggl(\partial^2G(h_n, u_{h_n})[\vphi_n]+2\int_{\Om_{h_n}}W(E(v_{\phi_n}))\, dz\biggr)\\-
\left(\partial^2G(d, u_d)[\vphi_n]+2\int_{\Om_d}W(E(v_{\vphi_n}))\, dz\right)\to 0
\end{multline}
as $n\to\infty$. Thus, recalling \eqref{start}, we also have
$$
\partial^2G(h_n, u_{h_n})[\vphi_n]-\partial^2G(d, u_d)[\vphi_n]\to 0
$$
and, in turn, by \eqref{absurd}
$$
\limsup \partial^2G(d, u_d)[\vphi_n]\leq\frac{m_0}{2}\,,
$$
which is a contradiction to \eqref{emme0}. This concludes the proof of the lemma.
\end{proof}

Next we prove that $(d,u_d)$ is an isolated critical pair.
\begin{proposition}\label{prop:nocritici}
Assume that \eqref{uc} and \eqref{bo1} hold. Then there exists $\sigma>0$ such that if $(h, u_h)\in X$ 
%%%%%%%%change%%%%%
{
with  $|\Om_{h}|=|\Om_d|$ and $0<\|h-d\|_{W^{2,p}_\#(Q)}\leq \sigma$, 
}
%%%%%%%%%%%%%%%%%%%%%
then $(h, u_h)$ is not a critical pair.
\end{proposition}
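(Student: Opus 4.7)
My plan is to argue by contradiction: suppose there is a sequence of critical pairs $(h_n, u_{h_n})\in X$ with $|\Om_{h_n}|=|\Om_d|$, $h_n\neq d$, and $h_n\to d$ in $W^{2,p}_\#(Q)$. First I would upgrade the convergence. Since each $h_n$ is a critical profile, the weak form of its Euler--Lagrange equation (Remark~\ref{rm:5}) coincides with the identity \eqref{mostro} with the source $v_{h_{i,n}}/\tau_n$ dropped, so the Weyl-type bootstrap from Step~1 of the proof of Theorem~\ref{th:5} applies verbatim and yields $h_n\in C^{2,\sigma}_\#(Q)$ for some $\sigma>0$, with bounds depending continuously on $\|h_n\|_{W^{2,p}_\#(Q)}$. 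Compactness then promotes $h_n\to d$ to convergence in $C^{2,\alpha}_\#(Q)$ for the fixed $\alpha\in(0,1-2/p)$ of this section.

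Next I would exploit a Rolle argument along the affine path. Set $\vphi_n:=h_n-d$; the constraint $|\Om_{h_n}|=|\Om_d|$ forces $\vphi_n\in\widetilde H^1_\#(Q)$. Define $h_n(t):=d+t\vphi_n$ and $f_n(t):=F(h_n(t),u_{h_n(t)})$ on $[0,1]$. Since $p>2$, the map $H\mapsto|H|^p$ is of class $C^2$ on $\R$, and by standard elliptic dependence of $u_h$ on $h$ one has $f_n\in C^2([0,1])$. Because both $(d,u_d)$ and $(h_n,u_{h_n})$ are critical, and the path is affine (so the $F'\cdot h''$-term vanishes), $f_n'(0)=f_n'(1)=0$ and $f_n''(t)=\partial^2 F(h_n(t),u_{h_n(t)})[\vphi_n]$. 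Rolle's theorem then produces $t_n\in(0,1)$ with
\beq\label{planrolle}
\partial^2 F(h_n(t_n),u_{h_n(t_n)})[\vphi_n]=0\,.
\eeq

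To close the argument, decompose $F=G+R$ with $R(h):=(\e/p)\int_{\Gamma_h}|H|^p\,d\H^2$. Since $\|h_n(t_n)-d\|_{C^{2,\alpha}_\#(Q)}\leq\|\vphi_n\|_{C^{2,\alpha}_\#(Q)}\to 0$, Lemma~\ref{lm:c2alfa} provides, for all $n$ large,
\beq\label{planG}
\partial^2 G(h_n(t_n),u_{h_n(t_n)})[\vphi_n]\geq c_0\|\vphi_n\|_{H^1_\#(Q)}^2\,.
\eeq
The crucial observation is that $\partial^2 R(d)\equiv 0$ on $\widetilde H^1_\#(Q)$: differentiating the first-variation formula of Proposition~\ref{prop:eulero} twice along $h_s=d+s\phi$, every resulting term carries a factor $|H|^{p-2}$, $|H|^{p-1}$, or $|H|^p$, each of which vanishes at the flat profile precisely because $p>2$. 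For $h$ in a small $C^{2,\alpha}$-neighbourhood of $d$, the non-negative leading quadratic form $(p-1)\int_{\Gamma_h}|H|^{p-2}H_1^2\,J\,dx$ (with $H_1$ the linearization of $H$ in direction $\phi$) absorbs the cross terms $\int f'(H)H_1J_1$ via Cauchy--Schwarz and Young, while the remaining contributions are bounded by $C\|H\|_{L^\infty}^\theta\|\phi\|_{H^1_\#(Q)}^2$ for some $\theta>0$. Plugging $\phi=\vphi_n$ and combining with \eqref{planG} I obtain $\partial^2 F(h_n(t_n),u_{h_n(t_n)})[\vphi_n]\geq(c_0/2)\|\vphi_n\|_{H^1_\#(Q)}^2>0$, contradicting \eqref{planrolle}.

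The main technical obstacle is precisely this quantitative bound on $\partial^2 R$. A naive estimate controls it by $C\|H\|_\infty^\theta\|\phi\|_{H^2_\#(Q)}^2$, which is too weak to be absorbed into the $H^1$-coercivity of $\partial^2 G$. The point is to use the non-negative leading term as a reservoir and check, term by term in the expansion of the second variation, that every ``bad'' $\Delta\phi$-contribution appears paired with a coefficient $|H|^\beta$, $\beta\geq p-2$, whose square root can be split by Young between the leading quadratic form and an $H^1$-controlled remainder with coefficient tending to zero as $h\to d$ in $C^{2,\alpha}$. Once this is done, the Rolle closure is immediate; the remaining ingredients---the $C^{2,\alpha}$ regularity upgrade and the second-variation stability of $G$---are already supplied by the earlier parts of this section.
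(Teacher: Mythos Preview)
Your contradiction strategy and the $C^{2,\alpha}$ regularity upgrade via the Euler--Lagrange bootstrap match the paper's proof, and your Rolle argument is logically equivalent to what the paper does (it shows $\frac{d^2}{ds^2}F>0$ for \emph{all} $t\in(0,1)$, which of course contradicts $f_n'(0)=f_n'(1)=0$). One minor omission: by \eqref{secvar} the second derivative of $G$ along the affine path differs from the quadratic form $\partial^2 G$ in Lemma~\ref{lm:c2alfa} by the surface integral involving $W(E(u_h))+H^\psi$, which does \emph{not} vanish since $h_n(t_n)$ is not critical. The paper absorbs it by subtracting the constant $W_d$ and using $\|W(E(u_{h_n(t)}))+H^\psi_{h_n(t)}-W_d\|_\infty\to 0$; you should mention this.

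The substantive gap is in your treatment of the Willmore part. Your absorption scheme presumes that every ``bad'' second-order contribution enters only through $\dot H\approx-\Delta\phi$, but the term $p\int f'(H)\ddot H\,J$ involves $\ddot H=-\operatorname{div}\bigl(D^2g(Dh)[D\phi,D\phi]\bigr)$, whose expansion contains the \emph{full} Hessian $D^2\phi$, not just its trace. A Young split leaves a residual $\delta\int|H|^{p-2}|D^2\phi|^2$, and this cannot be absorbed into the reservoir $(p-1)\int|H|^{p-2}(\dot H)^2\approx(p-1)\int|H|^{p-2}(\Delta\phi)^2$: the pointwise inequality $|D^2\phi|^2\le C(\Delta\phi)^2$ is false, and the weighted integration-by-parts identity relating the two requires $|H|^{p-2}\in W^{1,1}$, which fails where $H$ vanishes when $2<p<3$. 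Nor does the crude bound $o(1)\|\vphi_n\|_{H^2}^2$ help, since the ratio $\|\vphi_n\|_{H^2}/\|\vphi_n\|_{H^1}$ is uncontrolled. The paper sidesteps this completely by exploiting the flatness of $d$: since $D(d+t\vphi_n)=tDh_n$, one gets the explicit factoring $\mathcal{W}_p(d+t\vphi_n)=t^p\int_Q g_n(x,t)^{p/2}\,dx$, and a direct computation of $(t^pg_n^{p/2})''$ together with Lemma~\ref{lm:morini} yields $\frac{d^2}{dt^2}\mathcal{W}_p(d+t\vphi_n)\ge C_0\|\Delta h_n\|_p^p-C_1\|Dh_n\|_\infty^p\|D^2h_n\|_p^p\ge 0$ for $n$ large. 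This $t^p$-factoring, not a generic Cauchy--Schwarz/Young scheme, is what makes the Willmore estimate go through.
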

\begin{proof}
 Assume by contradiction that there exists a sequence 
%%%%%%%%change%%%%%
{
$h_n\to d$ in $W^{2,p}_\#(Q)$,  with $h_n\ne d$ and  $|\Om_{h_n}|=|\Om_d|$, 
}
%%%%%%%%%%%%%%%%%%%%%
  such that $(h_n, u_{h_n})$ is a critical pair.  Using the Euler-Lagrange equation and arguing as in the proof  of Theorem~\ref{th:5}, one can
show that 
$$
 \int_Q|D^2(|H_n|^{p-2}H_n)|^2\,dx\leq C\int_Q\Bigl(|D^2h_{n}|^{2}|D(|H_n|^{p-2}H_n)|^2+|H_n|^{2(p+1)}
+1\Bigr)\, dx\,.
$$
Indeed, this can obtained as  \eqref{53}, taking into account that  there is no contribution from the time derivative.
From this inequality, arguing exactly as in the final part of the proof of Theorem~\ref{th:5} we deduce that
$$
 \int_Q|D^2(|H_n|^{p-2}H_n)|^2\,dx\leq C
$$
for some $C$ independent of $n$. In particular, by the Sobolev embedding theorem, $\{|H_n|^{p-2}H_n\}$ is bounded in 
$C^{0,\beta}_\#(Q)$ for every $\beta\in (0,1)$. Hence, $\{H_n\}$ is bounded in 
$C^{0,\beta}_\#(Q)$ for all $\beta\in (0,1/(p-1))$. In turn, by \eqref{div eq} and standard elliptic regularity this implies that 
$\{h_n\}$ is  bounded in  $C^{2,\beta}_\#(Q)$ for all $\beta\in (0,1/(p-1))$ and thus  $h_n\to d$ in $C^{2,\beta}(Q)$ for all
such  $\beta$.
Since $(d,u_d)$ is a critical pair (see Remark~\ref{remark critical}), 
%%%%%%%%%%%change%%%%%%%%
{
 $\frac{d}{ds}F(d+s(h_n-d), u_{d+s(h_n-d)})|_{s=0}=0$, and so
}
%%%%%%%%%%%%%%
by \eqref{secvar} to reach a contradiction it is enough to show that for $n$ large
\begin{multline*}
\frac{d^2}{ds^2}F(d+s(h_n-d), u_{d+s(h_n-d)})|_{s=t}=\partial^2G(h_{n,t}, u_{h_{n,t}})[h_n-d]\\
- \int_{\Gamma_{h_{n,t}}}( W (E(u_{h_{n,t}})) +H_{h_{n,t}}^{\psi}) \,
\Div_{\Gamma_{h_{n,t}}}  \left( \frac{(D h_{n,t}, |D h_{n,t}|^2 )(h_{n,t}-d)^2} {(1 + |D h_{n,t}|)^{\frac32}} \circ \pi \right) \, d \H^2\\
+\e\frac{d^2}{ds^2}\mathcal{W}_p(d+s(h_n-d))|_{s=t}>0
\end{multline*}
for all $t\in (0,1)$, where $h_{n, t}:=d+t(h_n-d)$, $H_{h_{n,t}}^{\psi}$ is defined as in \eqref{defHpsi} with $h$ replaced by 
$h_{n,t}$, and
$$
 \mathcal{W}_p(h):=\int_{\Gamma_h}|H|^p\, d\H^2\,.
 $$
 To this purpose, note that  since $h_n\to d$ in $C^{2,\beta}$, by Lemma~\ref{lm:chepallequadrate}  we have
 $$
 \sup_{t\in (0,1)}\|W (E(u_{h_{n,t}})) +H^{\psi} -W_d\|_{L^\infty(\Gamma_{h_{n,t}})}\to 0
 $$
 as $n\to\infty$, where $W_d$ is the constant value of $W(E(u_d))$ on $\Gamma_d$ (see Remark~\ref{remark critical}). Therefore, also
 by Lemma~\ref{lm:c2alfa}, we deduce  that
 \begin{align*}
& \partial^2G(h_{n,t}, u_{h_{n,t}})[h_n-d]\\
& \quad- \int_{\Gamma_{h_{n,t}}}( W (E(u_{h_{n,t}})) +H^{\psi}_{h_{n,t}}) \,
\Div_{\Gamma_{h_{n,t}}}  \left( \frac{(D h_{n,t}, |D h_{n,t}|^2 )(h_{n,t}-d)^2} {(1 + |D h_{n,t}|)^{\frac32}} \circ \pi \right)  \, d \H^2\\
&=\partial^2G(h_{n,t}, u_{h_{n,t}})[h_n-d]\\
&\quad - \int_{\Gamma_{h_{n,t}}}( W (E(u_{h_{n,t}})) +H^{\psi}_{h_{n,t}}-W_d) \,
\Div_{\Gamma_{h_{n,t}}} \left( \frac{(D h_{n,t}, |D h_{n,t}|^2 )(h_{n,t}-d)^2} {(1 + |D h_{n,t}|)^{\frac32}} \circ \pi \right)  \, d \H^2\\
&\geq c_0\|h_n-d\|^2_{H^1_\#(Q)}-
C\|W (E(u_{h_{n,t}})) +H^{\psi}_{h_{n,t}} -W_d\|_{L^\infty(\Gamma_{h_{n,t}})}\|h_n-d\|^2_{H^1_\#(Q)}\geq
 \frac{c_0}{2}\|h_n-d\|^2_{H^1_\#(Q)}
\end{align*}
for $n$ large and for some constant $c_0>0$ independent of $n$,
%%%%%%%%%%%change%%%%%%%%
{
 where we used the facts that 
\begin{align*}
\int_{\Gamma_{h_{n,t}}}\left\|\Div_{\Gamma_{h_{n,t}}}  \left( \frac{(D h_{n,t}, |D h_{n,t}|^2 )(h_{n,t}-d)^2} {(1 + |D h_{n,t}|)^{\frac32}} \circ \pi \right) \right\|  \, d \H^2 \le C \|h_n\|_{C^2_\#(Q)} \|h_n-d\|_{H^1_\#(Q)}^2
\end{align*}
and that $h_n\to d$ in $C^{2,\beta}(Q)$.
}
%%%%%%%%%%%%%%

Since 
$$
\mathcal{W}_p(d+t(h_n-d))= t^p \int_Q\biggl|\Div\frac{Dh_n}{\sqrt{1+t^2|Dh_n|^2}}\biggr|^p\, dx=: f_n(t)\,,
$$
in order to conclude it is enough to show that $f''_n(t)\geq 0 $ for all $t\in (0,1)$.
Set
$$
g_n(x,t):=\biggl|\Div\frac{Dh_n(x)}{\sqrt{1+t^2|Dh_n(x)|^2}}\biggr|^2
$$
so that 
\begin{equation}\label{f2}
f''_n=\int_Q\Bigl[p(p-1)t^{p-2}g_n^{\frac p2}+p^2t^{p-1}g_n^{\tfrac{p-2}{2}}\partial_t g_n
+\tfrac{p}2t^p\bigl(\bigl(\tfrac{p}2-1\bigr)g_n^{\frac{p-4}{2}}(\partial_t g_n)^2+ 
g_n^{\frac{p-2}{2}}\partial_{tt} g_n\bigr)\Bigr ]\,dx\,.
%&\geq \tfrac{p}2t^{p-2}\int_Q g_n^{\tfrac{p-2}{2}}\bigl(2(p-1)g_n+2pt\partial_tg_n+t^2\partial_{tt}g_n \bigr)\, dx\,.
\end{equation}
On the other hand, observe that
$$
g_n=\frac{|\Delta h_n|^2}{1+t^2|Dh_n|^2}+t^2\frac{|D^2h_n[Dh_n, Dh_n]|^2}{(1+t^2|Dh_n|^2)^3}
-2t\frac{D^2h_n[Dh_n, Dh_n]\Delta h_n}{(1+t^2|Dh_n|^2)^2}\,
$$
so that for  $n$ large 
$$
g_n\geq  \frac12|\Delta h_n|^2-C|D^2h_n||Dh_n|^2 \quad\text{and}\quad 
|\partial_t g_n|+|\partial_{tt}g_n|\leq C|D^2h_n||Dh_n|\,.
$$
  We then deduce from \eqref{f2} that there exist $C_0$, $C_1>0$ independent of $n$ and $t\in (0,1)$ such that 
 $$
 f''_n(t)\geq C_0\int_Q|\Delta h_n|^p\, dx-C_1\|Dh_n\|^p_\infty\int_Q|D^2h_n|^p\, dx\,.
 $$
 Since $\|Dh_n\|_\infty\to 0$, by Lemma~\ref{lm:morini} we conclude that the right-hand side in the above inequality
 is non-negative for $n$ large, thus concluding the proof of the proposition.
\end{proof}
Finally, we prove the main result of this section, namely, the asymptotic stability of the flat configuration (see Definition~\ref{def:asymptotic}).
\begin{theorem}\label{th:botto}
Under the assumptions of Theorem~\ref{th:boli}, $(d, u_d)$ is asymptotically stable.
\end{theorem}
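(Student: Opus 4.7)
The plan is to combine three ingredients already at hand: the Liapunov stability of Theorem~\ref{th:boli}, which traps the trajectory in a prescribed $W^{2,p}_\#(Q)$-ball around $d$; Proposition~\ref{th:pci}, which extracts a convergent subsequence whose limit is a critical profile; and Proposition~\ref{prop:nocritici}, asserting that inside a sufficiently small $W^{2,p}_\#(Q)$-ball (subject to the volume constraint) the only critical profile is $d$ itself. I choose $\sigma>0$ small enough that the ball $\{h:\|h-d\|_{W^{2,p}_\#(Q)}\le\sigma\}$ lies inside both the radius of Proposition~\ref{prop:nocritici} and, via the embedding $W^{2,p}_\#(Q)\hookrightarrow C^{1,\alpha}_\#(Q)$, the $C^{1,\alpha}$-neighborhood furnished by Theorem~\ref{th:bonacini}. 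Setting $\delta:=\delta(\sigma)$ from Theorem~\ref{th:boli}, any variational solution starting within distance $\delta$ of $d$ in $W^{2,p}_\#(Q)$ exists globally, preserves volume, and stays in the $\sigma$-ball for all $t\ge 0$.

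With this in place, Proposition~\ref{th:pci} produces $t_n\to+\infty$ with $h(\cdot,t_n)\to\bar h$ strongly in $W^{2,p}_\#(Q)$ for some critical profile $\bar h$. Since $|\Om_{\bar h}|=|\Om_d|$ and $\|\bar h-d\|_{W^{2,p}_\#(Q)}\le\sigma$, Proposition~\ref{prop:nocritici} forces $\bar h=d$. Strong $W^{2,p}$ (hence $C^{1,\alpha}$) convergence, together with Lemma~\ref{lm:chepallequadrate} for the elastic equilibria, makes every piece of the energy continuous, so $F(h(\cdot,t_n),u_{h(\cdot,t_n)})\to F(d,u_d)$. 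Combining this with the monotonicity of $t\mapsto F(h(\cdot,t),u_{h(\cdot,t)})$ from \eqref{ex0}--\eqref{3000} gives
$$
\lim_{t\to+\infty}F(h(\cdot,t),u_{h(\cdot,t)})=F(d,u_d).
$$

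It remains to upgrade this energy convergence to $h(\cdot,t)\to d$ strongly in $W^{2,p}_\#(Q)$. Arguing by contradiction, suppose $\|h(\cdot,s_n)-d\|_{W^{2,p}_\#(Q)}\ge\eta>0$ along some $s_n\to+\infty$. By the uniform $W^{2,p}$-bound, extract (along a subsequence not relabelled) $h(\cdot,s_n)\wto\tilde h$ weakly in $W^{2,p}_\#(Q)$, hence strongly in $C^{1,\alpha}_\#(Q)$. Lower semicontinuity of $F$ together with the energy limit gives $F(\tilde h,u_{\tilde h})\le F(d,u_d)$; but $F\ge G$ pointwise, with equality at the flat profile since $H\equiv 0$ there, so Theorem~\ref{th:bonacini} forces $\tilde h=d$. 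The strong $C^{1,\alpha}$ convergence then makes both $\int_{\Om_{h(\cdot,s_n)}}W(E(u_{h(\cdot,s_n)}))\,dz$ and $\int_{\Gamma_{h(\cdot,s_n)}}\psi(\nu)\,d\H^2$ converge to their values at $d$, so the full energy limit forces $\int_{\Gamma_{h(\cdot,s_n)}}|H_n|^p\,d\H^2\to 0$, where $H_n$ denotes the sum of principal curvatures of $h(\cdot,s_n)$; since the area factor is bounded below by $1$, we conclude $\|H_n\|_{L^p(Q)}\to 0$. Invoking Lemma~\ref{lm:morini} with the uniform bound $\|Dh_n\|_{L^\infty(Q)}\le\Lambda_0$ then yields $\|D^2 h(\cdot,s_n)\|_{L^p(Q)}\to 0$, and therefore $h(\cdot,s_n)\to d$ in $W^{2,p}_\#(Q)$, contradicting the choice of $\{s_n\}$. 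The main obstacle I anticipate is precisely this weak-to-strong upgrade in $W^{2,p}$: it works only because the curvature regularization $\int|H|^p$ is the top-order term in $F$ and vanishes exactly at $d$, so its convergence to zero pins down the full $D^2 h$ norm.
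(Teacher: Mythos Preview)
Your proof is correct and follows essentially the same route as the paper's: trap the trajectory via Theorem~\ref{th:boli}, use Proposition~\ref{th:pci} together with Proposition~\ref{prop:nocritici} to identify the subsequential limit as $d$, pass to full energy convergence via the monotonicity in \eqref{ex0}--\eqref{3000} and the strict minimality from Theorem~\ref{th:bonacini}, and then upgrade to strong $W^{2,p}_\#(Q)$ convergence. The only difference is one of exposition: where the paper says ``reasoning as in the proof of Theorem~\ref{th:bonacinievol} (see \eqref{3001})'', you spell out explicitly the compactness/lower-semicontinuity argument identifying the weak limit as $d$ and then the curvature-vanishing step (convergence of the elastic and anisotropic pieces forces $\int_{\Gamma}|H|^p\,d\H^2\to 0$, hence $\|D^2h(\cdot,s_n)\|_{L^p(Q)}\to 0$ by Lemma~\ref{lm:morini}); this is precisely the mechanism the paper is invoking by that reference.
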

\begin{proof} By  Proposition~\ref{prop:nocritici} there exists $\sigma>0$ such that
%%%%%%%%%%%change%%%%%%%%
{
 if $h$ is a critical profile, with $|\Omega_h|=|\Omega_d|$ and $\|h-d\|_{W^{2,p}_\#(Q)}\leq \sigma$, then $h=d$. In view of Theorem~\ref{th:bonacini} we may take $\sigma$ so small that
\beq\label{eq:botto0}
F(d, u_d)<F(k,u_k)\qquad\text{for all $(k,u_k)\in X$ with $0<\|k-d\|_{W^{2,p}_\#(Q)}\leq \sigma$.}
\eeq 
 Since $(d, u_d)$ is Liapunov stable by Theorem~\ref{th:boli},  for every fixed $(h_0, u_{0})\in X$ with  $|\Omega_{h_0}|=|\Omega_d|$ and $\|h_0-d\|_{W^{2,p}_\#(Q)}\leq \de(\sigma)$,  we have  
 \beq\label{forallt}
 \|h(\cdot, t)-d\|_{W^{2,p}_\#(Q)}\leq \sigma \qquad\text{for all $t>0$.}
 \eeq
Here  $\de(\sigma)$ is the number given in Definition~\ref{def:liapunov}. 
We claim that
\beq\label{eq:botto}
F(h(\cdot, t), u_{h}(\cdot, t))\to F(d, u_d) \qquad\text{as $t\to +\infty$.}
\eeq
By Proposition~\ref{th:pci} there exists a sequence $\{t_n\}\subset(0, +\infty)\setminus Z_0$  such that $t_n\to+\infty$  and  $\{h(\cdot, t_n)\}$ converges to a critical profile in $W^{2,p}_\#(Q)$,  where $Z_0$ is the set in \eqref{ex0}. In view of the choice of $\sigma$ and by \eqref{forallt}, we conclude that $h(\cdot, t_n)\to d$ in $W^{2,p}_\#(Q)$. 
}
%%%%%%%%%%%%%%%%

 In particular $F(h(\cdot, t_n), u_{h(\cdot, t_n)})\to F(d, u_d)$. In turn, by \eqref{ex0}, this implies that $F(h(\cdot, t), u_{h}(\cdot, t))\to F(d, u_d)$ as $t\to +\infty$, $t\not\in Z_0$. On the other hand, by \eqref{3000}  for $t\in Z_0$ we have that  $F(h(\cdot, t), u_{h}(\cdot, t))\leq F(h(\cdot, \tau), u_{h}(\cdot, \tau))$ for all $\tau<t$, $\tau\not\in Z_0$. Therefore 
 $$
 \limsup_{t\to+\infty, t\in Z_0}F(h(\cdot, t), u_{h}(\cdot, t))\leq F(d, u_d). 
 $$
 Recalling \eqref{eq:botto0}, we finally obtain \eqref{eq:botto}. 
%%%%%%%%%%%change%%%%%%%%
{
In turn, reasoning as in the proof of Theorem~\ref{th:bonacinievol} (see \eqref{3001}), it follows from \eqref{eq:botto0} and \eqref{forallt} that for every sequence $\{s_n\}\subset(0,+\infty)$, with $s_n\to+\infty$, there exists a subsequence such that $\{h(\cdot, s_n)\}$ converges to $d$ in $W^{2,p}_\#(Q)$. This implies that $h(\cdot, t)\to d$ 
in $W^{2,p}_\#(Q)$ as $t\to+\infty$ and concludes the proof.
}
%%%%%%%%%%%%%%%%
\end{proof}

\subsection{The two-dimensional case.}\label{subsec:2d} As remarked in the introduction, the arguments presented in the previous subsections 
apply to the two-dimensional version of \eqref{geq}, with $p=2$, studied in \cite{FFLM2}, with 
\beq\label{geq2d}
V=\Bigl((g_{\theta\theta}+g)k+W(E(u))-\e\bigl(k_{\sigma\sigma}+\frac12k^3\bigr)\Bigr)_{\sigma\sigma}\,.
\eeq
Here $V$ denotes  the outer normal velocity of $\Gamma_{h(\cdot, t)}$, $k$ is its curvature, $W(E(u))$ is the trace of
 $W(E(u(\cdot, t)))$ on $\Gamma_{h(\cdot, t)}$, with $u(\cdot, t)$ the elastic equilibrium in $\Om_{h(\cdot, t)}$,  under the conditions
that $Du(\cdot, y)$  is $b$-periodic and  $u(x,0)=e_0(x,0)$, for some $e_0>0$; and 
 $(\cdot)_\sigma$ stands for tangential differentiation along $\Gamma_{h(\cdot, t)}$. The constant $e_0>0$ measures the lattice mismatch between the elastic film and the (rigid) substrate. Moreover,  
 $g:[0,2\pi]\to (0,+\infty)$ is defined as
\beq\label{g}
g(\theta)=\psi(\cos\theta, \sin\theta)
\eeq
and is evaluated at $\arg(\nu(\cdot, t))$, where $\nu(\cdot, t)$ is the outer normal to $\Gamma_{h(\cdot, t)}$. The underlying energy functional is then given by
$$
F(h,u):=\int_{\Omega_h}W(E(u))\,dz+\int_{\Gamma_h}\Bigl(\psi(\nu)+\frac\e2k^2\Bigr)\, d\H^1\,.
$$
In the two-dimensional framework, given $b>0$, we search for for $b$-periodic solutions to \eqref{geq2d}. 

A local-in-time {\em b-periodic weak solution } to \eqref{geq2d} is  a function  $h\in H^1\bigl(0,T_0; H^{-1}_\#(0,b)\bigr)\cap L^\infty(0,T_0; H^2_\#(0,b))$ such that:
\begin{itemize}
\item[(i)] $\displaystyle (g_{\theta\theta}+g)k+W(E(u))-\e\bigl(k_{\sigma\sigma}+\frac12k^3\bigr)\in L^2(0,T_0; H^1_\#(0,b))$,
\item[(ii)] for almost every $t\in [0,T_0]$, 
$$
\frac{\partial h}{\partial t}=J\Bigl((g_{\theta\theta}+g)k+Q(E(u))-\e\bigl(k_{\sigma\sigma}+\frac12k^3\bigr)\Bigr)_{\sigma\sigma} \qquad 
\text{in $H^{-1}_\#(0,b)$.}
$$
\end{itemize}

Given $(h_0, u_0)$, with $h_0\in H^2_\#(0,b)$,  $h_0>0$, and $u_0$ the corresponding elastic equilibrium,  local-in-time existence of a {\em unique} weak solution with initial datum $(h_0, u_0)$ has been established in \cite{FFLM2}.
The Liapunov and asymptotic stability analysis of the flat configuration established in Subsections~\ref{sub:nc} and \ref{sub:c} extends to the two-dimensional case, where, in addition, the range of $d$'s under which \eqref{bo1} holds can be analytically determined  for isotropic elastic energies of the form
$$
W(\xi):=\mu|\xi|^2+\frac{\lambda}2(\operatorname*{trace} \xi)^2\,.
$$
In the above formula  the {\em Lam\'e coefficients} $\mu$ and $\lambda$ are chosen to satisfy the ellipticity conditions $\mu>0$ and $\mu+\lambda>0$, see \cite{FM09, Bo0}. The stability range of the flat configuration depends on $\mu$, $\lambda$, and the mismatch constant $e_0$ appearing in the Dirichlet condition $u(x,0)=e_0(x,0)$. For the reader's convenience, we recall the results. Consider the  
 {\em Grinfeld function} $K$ defined by
\begin{equation}\label{grinfeld-k}
K(y):=\max_{n\in\N}\frac{1}{n}J(ny)\,, \quad y\geq0\,,
\end{equation}
where
$$
J(y):=\frac{y+(3-4\nu_p)\sinh y\cosh y}{4(1-\nu_p)^2+y^2+(3-4\nu_p)\sinh^2y}\,,
$$
and $\nu_p$ is the {\em Poisson modulus} of the elastic material, i.e.,  
\begin{equation}\label{poisson}
\nu_p: =\frac{\lambda}{2(\lambda+\mu)}\,.
\end{equation}
It turns out that $K$ is strictly increasing and continuous,  $K(y)\leq Cy$, and  $\displaystyle\lim_{y\to +\infty}K(y)=1$, for some positive constant $C$.
We also set, as in the previous subsections,  
$$
G(h,u):=\int_{\Omega_h}W(E(u))\,dz+\int_{\Gamma_h}\psi(\nu)\, d\H^1\,.
$$
 Combining \cite[Theorem~2.9]{FM09} and \cite[Theorem~2.8]{Bo0} with the results of the previous subsection, we obtain the 2D asymptotic stability of the flat configuration.
\begin{theorem}\label{th:2dliapunov}
Assume $\partial^2_{11}\psi(0,1)>0$.   Let $\dloc:(0,+\infty)\to (0,+\infty]$ be defined as $\dloc (b):=+\infty$,  if $0<b\leq \frac{\pi}{4}\frac{(2\mu+\lambda)\partial^2_{11}\psi(0,1)}{e_0^2\mu(\mu+\lambda)}$, and  as the solution to 
\begin{equation}\label{mainminloc1}
K\Bigl(\frac{2\pi \dloc(b)}{b}\Bigr)=\frac{\pi}{4}\frac{(2\mu+\lambda)\partial^2_{11}\psi(0,1)}{e_0^2\mu(\mu+\lambda)}\frac1b\,,
\end{equation}
otherwise.
%for $b>\frac{\pi}{4}\frac{2\mu+\lambda}{e_0^2\mu(\mu+\lambda)}$. 
Then the second variation of $G$ at $(d, u_d)$ is positive definite, i.e.,
$$
 \partial^2G(d, u_d)[\vphi]>0\qquad\text{for all $\vphi\in  H_\#^{1}(0,b)\setminus\{0\}$, with }\int_0^b\vphi\, dx=0\,,
$$
 if and only if $0<d<\dloc(b)$.
In particular, for all  $d\in (0,\dloc(b))$  the flat configuration   $(d, u_d)$ is asymptotically stable.
\end{theorem}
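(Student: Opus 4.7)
The theorem has two halves: the static characterization of when $\partial^2 G(d,u_d)$ is positive definite, and the dynamic conclusion that this positivity yields asymptotic stability. The plan is to lift the first half directly from \cite[Thm.~2.9]{FM09} and \cite[Thm.~2.8]{Bo0}, and to transport the proofs of Theorems~\ref{th:boli} and \ref{th:botto} to the two-dimensional, $p=2$ setting of \cite{FFLM2}.

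For the first half, since $h\equiv d$ is flat and $w_0$ is affine, $u_d$ is itself affine on $\Omega_d$, and Fourier-expanding a zero-mean perturbation $\vphi(x)=\sum_{n\neq 0}a_n e^{2\pi i n x/b}$ decouples the linearized elastic corrector $v_\vphi$ in \eqref{vf} mode by mode via explicit Airy stress functions in the strip. Substituting into \eqref{ps2G} diagonalizes
\begin{equation*}
\partial^2 G(d,u_d)[\vphi]=\sum_{n\neq 0}\Bigl[\partial^2_{11}\psi(0,1)\bigl(\tfrac{2\pi n}{b}\bigr)^2-\tfrac{4e_0^2\mu(\mu+\lambda)}{2\mu+\lambda}\tfrac{2\pi|n|}{b}J\bigl(\tfrac{2\pi|n|d}{b}\bigr)\Bigr]|a_n|^2,
\end{equation*}
as computed in the cited references. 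Positivity of every bracket for all $n\geq 1$ is, by the definition \eqref{grinfeld-k} of $K$, equivalent to
$K(2\pi d/b)<\tfrac{\pi}{4}\tfrac{(2\mu+\lambda)\partial^2_{11}\psi(0,1)}{e_0^2\mu(\mu+\lambda)}\tfrac{1}{b}$. Since $K$ is strictly increasing with $\lim_{+\infty}K=1$, inverting yields precisely $d<\dloc(b)$ with $\dloc(b)$ from \eqref{mainminloc1}, the value $\dloc(b)=+\infty$ corresponding to those $b$ for which the right-hand side already exceeds $1$.

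For the dynamic half, I would fix $d\in(0,\dloc(b))$ and invoke the 2D counterparts of the arguments in Subsection~\ref{sub:c}. The hypothesis $\partial^2_{11}\psi(0,1)>0$ is the 2D analogue of the restricted convexity in Remark~\ref{remark-bonacini}, hence the 2D version of Theorem~\ref{th:bonacini} gives local minimality of $(d,u_d)$ for $G$. Local existence, energy decrease, and the variational-solution framework in 2D with $p=2$ are provided by \cite{FFLM2} in place of Theorems~\ref{th:pre-pippa}--\ref{th:existence}, so the Liapunov stability argument of Theorem~\ref{th:boli} goes through unchanged. To upgrade to asymptotic convergence, I would combine the 2D analogues of Lemma~\ref{lm:c2alfa} (uniform coercivity of $\partial^2 G$ on a $C^{2,\alpha}$-neighborhood of $d$), Proposition~\ref{prop:nocritici} (isolation of $(d,u_d)$ among critical pairs), and the compactness Proposition~\ref{th:pci}, mimicking Theorem~\ref{th:botto} to conclude $h(\cdot,t)\to d$ in $W^{2,2}_\#(0,b)$ as $t\to+\infty$.

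The principal obstacle I anticipate is transferring Proposition~\ref{prop:nocritici} to the endpoint $p=2$: its 3D proof bootstraps the Euler--Lagrange equation through the substitution $w=|H|^{p-2}H$ together with the Sobolev embedding $W^{2,p}\hookrightarrow C^{1,\alpha}$ for $p>2$, both of which degenerate at $p=2$. In 2D, however, $H^2(0,b)\hookrightarrow C^{0,\beta}$ for every $\beta<1$ by the one-dimensional Sobolev theorem, so the Euler--Lagrange equation directly yields $H\in H^2_\#(0,b)$, and Schauder estimates for the prescribed-curvature equation \eqref{div eq} upgrade $h$ to $C^{2,\beta}$. With this regularity in hand, the second-variation computation at the end of Proposition~\ref{prop:nocritici}, combined with the now quadratic (rather than $p$-th power) curvature regularizer, carries over essentially verbatim.
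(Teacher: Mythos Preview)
Your proposal is correct and follows the same approach as the paper. In fact, the paper gives no detailed proof of Theorem~\ref{th:2dliapunov} at all: it simply states that the result follows by ``combining \cite[Theorem~2.9]{FM09} and \cite[Theorem~2.8]{Bo0} with the results of the previous subsection,'' and your plan spells out precisely that combination, correctly flagging the $p=2$ issue and resolving it via the one-dimensional embedding $H^2(0,b)\hookrightarrow C^{0,\beta}$.
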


\section{Appendix}
\subsection{Regularity results}
In this subsection we collect a few regularity results that have been used in the previous sections. We start with the following elliptic estimate, whose proof is essentially contained in \cite[Lemma 6.10]{FFLM2}.
\begin{lemma}\label{lm:chepallequadrate}
Let $M>0$, $c_0>0$ . Let $h_1$, $h_2\in C^{1,\alpha}_{\#}(Q)$ for some $\alpha\in (0,1)$, with $\|h_i\|_{C^{1,\alpha}_\#(Q)}\leq M$ and $h_i\geq c_0$, $=i=1,2$, and let $u_1$ and $u_2$ be the corresponding
elastic equilibria  in $\Om_{h_1}$ and $\Om_{h_2}$, respectively. 
Then, 
\beq\label{eq:chepalle}
\|E(u_1(\cdot, h_1(\cdot))-E(u_2(\cdot, h_2(\cdot))\|_{C^{1,\alpha}_\#(Q)}\leq C\|h_1-h_2\|_{C^{1,\alpha}_\#(Q)}
\eeq
for some constant $C>0$ depending only on $M$, $c_0$, and $\alpha$.
\end{lemma}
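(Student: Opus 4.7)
The plan is to pull both elastic equilibria back to a common reference domain through a near-identity diffeomorphism, compare the resulting transformed systems, and appeal to Schauder-type estimates up to the (merely $C^{1,\alpha}$) boundary. Concretely, I introduce $\Phi\colon \overline{\Om_{h_2}}\to\overline{\Om_{h_1}}$ given by $\Phi(x,y):=\bigl(x,\,y\,h_1(x)/h_2(x)\bigr)$; under the hypotheses on $h_1,h_2$ this is a $Q$-periodic (in $x$) $C^{1,\alpha}$-diffeomorphism that fixes $\{y=0\}$ and carries $\Gamma_{h_2}$ onto $\Gamma_{h_1}$, and it satisfies $\|\Phi-\Id\|_{C^{1,\alpha}(\overline{\Om_{h_2}};\R^3)}\leq C\|h_1-h_2\|_{C^{1,\alpha}_\#(Q)}$ with $C$ depending only on $M$, $c_0$, $\alpha$. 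Since \eqref{eq:chepalle} is local in $\|h_1-h_2\|_{C^{1,\alpha}}$, one may assume this quantity as small as convenient by splitting the general case into finitely many intermediate steps.

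Setting $\tilde u_1:=u_1\circ\Phi$ and changing variables in the weak formulation of the elastic equilibrium for $u_1$ produces a transformed Lam\'e-type system $-\Div(\mathcal{A}\,D\tilde u_1)=0$ in $\Om_{h_2}$, with zero Neumann condition on $\Gamma_{h_2}$, Dirichlet datum $w_0(\cdot,0)$ on $\{y=0\}$, and $Q$-periodicity, where the fourth-order tensor $\mathcal{A}$ satisfies $\|\mathcal{A}-\C\|_{C^{0,\alpha}(\overline{\Om_{h_2}})}\leq C\|h_1-h_2\|_{C^{1,\alpha}_\#(Q)}$ and is uniformly elliptic for $\|h_1-h_2\|_{C^{1,\alpha}}$ small. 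The difference $v:=\tilde u_1-u_2\in A(\Om_{h_2})$ then satisfies, in weak form, $-\Div(\C E(v))=\Div\bigl((\mathcal{A}-\C)D\tilde u_1\bigr)$ in $\Om_{h_2}$, with zero Dirichlet condition on $\{y=0\}$ and natural Neumann condition $\C E(v)[\nu_{h_2}]=\bigl((\C-\mathcal{A})D\tilde u_1\bigr)[\nu_{h_2}]$ on $\Gamma_{h_2}$. Standard a priori regularity for the Lam\'e system (see e.g.\ \cite[Proposition 8.9]{FM09}) gives $\|Du_i\|_{C^{0,\alpha}(\overline{\Om_{h_i}})}\leq C$, and hence $\|D\tilde u_1\|_{C^{0,\alpha}(\overline{\Om_{h_2}})}\leq C$ via the chain rule and the bound on $\Phi$. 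Schauder estimates up to the $C^{1,\alpha}$ boundary for the mixed Dirichlet-Neumann problem solved by $v$ then yield $\|Dv\|_{C^{0,\alpha}(\overline{\Om_{h_2}})}\leq C\|h_1-h_2\|_{C^{1,\alpha}_\#(Q)}$. Since $\Phi(x,h_2(x))=(x,h_1(x))$, taking traces on $\Gamma_{h_2}$ and expressing $E(u_1)(x,h_1(x))-E(u_2)(x,h_2(x))$ in terms of $E(v)(x,h_2(x))$ modulo a remainder of size $\|h_1-h_2\|_{C^{1,\alpha}}\,\|Du_1\|_{C^{0,\alpha}}$ yields \eqref{eq:chepalle}.

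The main technical obstacle is the Schauder estimate up to the merely $C^{1,\alpha}$ boundary $\Gamma_{h_2}$ for the Lam\'e system with mixed Dirichlet-Neumann conditions, with constants uniform across the admissible class of profiles. The standard route is to flatten $\Gamma_{h_2}$ locally by a $C^{1,\alpha}$ change of variables (which preserves the $C^{0,\alpha}$ regularity of the coefficients) and to invoke classical Agmon-Douglis-Nirenberg estimates in a half-space, patching via a finite cover compatible with the $Q$-periodicity. The two-dimensional version of the entire construction is carried out in \cite[Lemma 6.10]{FFLM2}, and the adaptation to three dimensions follows essentially the same pattern.
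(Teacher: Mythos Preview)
Your proposal is correct and follows precisely the route the paper indicates: the paper gives no independent proof but simply states that the argument ``is essentially contained in \cite[Lemma~6.10]{FFLM2},'' and what you have written is exactly the three-dimensional adaptation of that construction (pull-back via a near-identity diffeomorphism, Schauder estimates for the difference on the common domain). You have in fact supplied more detail than the paper does.
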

The following lemma is probably well-known to the experts, however for the reader's convenience we provide a proof.
\begin{lemma}\label{lm:weyl-fusco}
Let $p>2$, $u\in L^{\frac{p}{p-1}}(Q)$ such that 
$$
\int_Q u\,AD^2\vphi\, dx+\int_Qb\cdot D\vphi+\int_Q c\vphi\, dx=0\qquad\text{for all $\vphi\in C^\infty_{\#}(Q)$ with $\int_Q\vphi\, dx=0$,}
$$
where $A\in W^{1,p}_{\#}(Q; \mathbb{M}^{2\times 2}_{\rm sym})$ 
%%%%%%%%%%%change%%%%%%%%
{
satisfies standard uniform ellipticity conditions (see \eqref{mor2} below), 
}
%%%%%%%%%%%%%%%%%%
$b\in L^1(Q; \R^2)$, and $c\in L^1(Q)$. Then 
$u\in L^q(Q)$ for all $q\in (1,2)$. Moreover, if $b$, $u\,\Div A\in L^r(Q; \R^2)$ and $c\in L^r(Q)$ for some $r>1$, then $u\in W^{1,r}_\#(Q)$.
\end{lemma}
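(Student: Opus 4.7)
The plan is to prove both assertions by a duality argument based on the auxiliary periodic problem
\begin{equation*}
\Div(A D\varphi)=f\text{ in }Q, \qquad \int_Q \varphi\,dx=0,\quad \varphi\ Q\text{-periodic.}
\end{equation*}
The crucial observation is that the assumption $p>2$, combined with the ambient dimension $2$, gives $A\in W^{1,p}_\#(Q)\hookrightarrow C^{0,1-2/p}_\#(\overline Q)$; in particular $A$ is continuous. Combined with uniform ellipticity and $\Div A\in L^p$, this yields (by writing the equation in non-divergence form $A D^2\varphi = f - \Div A\cdot D\varphi$ and applying the Calder\'on--Zygmund theory for continuous coefficients, bootstrapped from the variational $W^{1,2}$ solution) that for every $s\in(1,p)$ and every $f\in L^s(Q)$ with zero mean, the above problem admits a unique mean-zero solution $\varphi\in W^{2,s}_\#(Q)$ with $\|\varphi\|_{W^{2,s}(Q)}\le C\|f\|_{L^s(Q)}$.

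For the first statement, fix $q\in(\tfrac{p}{p-1},2)$ (the range $q\le \tfrac{p}{p-1}$ is already covered by the standing assumption $u\in L^{p/(p-1)}$) and let $q'=q/(q-1)\in(2,p)$. For $f\in L^{q'}(Q)$ with $\int_Q f\,dx=0$, construct $\varphi$ as above; since $q'>2$, the two-dimensional Sobolev embedding yields $\|\varphi\|_{L^\infty}+\|D\varphi\|_{L^\infty}\le C\|f\|_{L^{q'}}$. Substituting the identity $A D^2\varphi=\Div(A D\varphi)-\Div A\cdot D\varphi=f-\Div A\cdot D\varphi$ into the weak equation rewrites it as
\begin{equation*}
\int_Q u f\,dx=\int_Q u\,\Div A\cdot D\varphi\,dx-\int_Q b\cdot D\varphi\,dx-\int_Q c\varphi\,dx,
\end{equation*}
and H\"older's inequality bounds the right-hand side by $C\bigl(\|u\|_{L^{p/(p-1)}}\|\Div A\|_{L^p}+\|b\|_{L^1}+\|c\|_{L^1}\bigr)\|f\|_{L^{q'}}$. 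Taking the supremum over admissible $f$ gives $u-\tfrac{1}{|Q|}\int_Q u\in L^q(Q)$, and since $u$ has a finite mean, $u\in L^q(Q)$.

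For the second statement, the same substitution recasts the weak equation as
\begin{equation*}
\int_Q u\,\Div(A D\varphi)\,dx=\int_Q G\cdot D\varphi\,dx-\int_Q c\varphi\,dx,\qquad G:=u\,\Div A-b\in L^r(Q),
\end{equation*}
for all $\varphi\in C^\infty_\#(Q)$ with zero mean. This identifies $u$ as a distributional periodic solution of the divergence-form equation $\Div(A Du)=-\Div G-c+\bar c$, where $\bar c:=\tfrac{1}{|Q|}\int_Q c$ is the constant forced by the compatibility condition on the torus. Standard $L^r$-theory for divergence-form operators with continuous coefficients produces a unique mean-zero $\tilde u\in W^{1,r}_\#(Q)$ solving this problem. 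A direct integration by parts shows that $\tilde u$ satisfies the same integral identity as $u$ against test functions with zero mean; therefore
\begin{equation*}
\int_Q(u-\tilde u)\,\Div(A D\varphi)\,dx=0\quad\text{for every admissible }\varphi,
\end{equation*}
and the surjectivity of $\varphi\mapsto\Div(A D\varphi)$ from the setup paragraph (together with density of smooth mean-zero functions in $W^{2,s}_\#(Q)\cap\{\int=0\}$) forces $u-\tilde u$ to be constant. Hence $u\in W^{1,r}_\#(Q)$.

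The main technical step is thus the $W^{2,s}$-regularity for the auxiliary periodic problem with continuous (but not Lipschitz) coefficients $A$; this is precisely where the exponent condition $p>2$ is used, via the Sobolev embedding in dimension $2$. Once this is in hand, both conclusions follow from routine duality combined with standard $L^r$-theory, and no further obstacle arises.
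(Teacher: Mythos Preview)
Your approach is correct in spirit and follows a genuinely different route from the paper's. The paper mollifies all the data, constructs an approximate solution $v_\e$ via the Green's function representation for $-\Div(A_\e D\cdot)$, uses the uniform bound $\|D_yG_\e(x,\cdot)\|_{L^q}\le C$ (valid because the ellipticity constants are preserved under mollification) to obtain $\sup_\e\|v_\e\|_{L^q}<\infty$, passes to a weak limit $v\in L^q$, and finally identifies $v$ with $u$ by a uniqueness argument for the very weak problem. You instead argue by direct duality: solve the adjoint problem $\Div(AD\varphi)=f$ with $W^{2,s}$ Calder\'on--Zygmund estimates (available because $A\in W^{1,p}_\#\hookrightarrow C^0$ in two dimensions, which is exactly where $p>2$ enters), and then extract the bound $|\int_Q uf|\le C\|f\|_{L^{q'}}$ from the weak equation. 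Your route is more streamlined and avoids both the mollification and the Green's function estimates; both proofs, however, rest on the same underlying surjectivity of $\varphi\mapsto \Div(AD\varphi)$ onto mean-zero functions, which the paper invokes in its uniqueness step and you in the identification $u-\tilde u=\mathrm{const}$.

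One point needs tightening. As written, for the first assertion you take $f\in L^{q'}$, build $\varphi\in W^{2,q'}_\#$, and then ``substitute into the weak equation''. But the weak identity is stated for $\varphi\in C^\infty_\#$ and extends by density only to $\varphi\in W^{2,p}_\#$ (this is all the hypothesis $u\in L^{p/(p-1)}$ permits); for $\varphi$ merely in $W^{2,q'}$ with $q'<p$ the term $\int_Q u\,AD^2\varphi$ is not a priori integrable, so the substitution is circular. The fix is immediate: restrict first to $f\in L^p$ (or $f\in C^\infty_\#$) with zero mean, for which your bootstrap yields $\varphi\in W^{2,p}_\#$; the weak identity then applies, and your right-hand side is still controlled by $\|f\|_{L^{q'}}$ via the $W^{2,q'}$ estimate $\|\varphi\|_{C^1}\le C\|f\|_{L^{q'}}$. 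Density of such $f$ in the mean-zero part of $L^{q'}$ completes the argument.
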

\begin{proof}
We only prove the first assertion, since the other one can be proven using similar arguments.
Denote by $A_\e$, $u_\e$, $b_\e$, and $c_\e$ the standard mollifications of
$A$, $u$, $b$, and $c$, and let $v_\e\in C^{\infty}_\#(Q)$ be the unique solution to the following problem
$$
\begin{cases}
\displaystyle \int_{Q}\bigl(A_\e Dv_\e+u_\e\Div A_\e-b_\e\bigr)\cdot D\vphi\, dx- \int_Qc_\e\vphi\, dx=0 & \text{for all $\vphi\in C^1_\#(Q)$\,, $\displaystyle\int_Q\vphi\, dx=0$,}\vspace{5pt}\\
\displaystyle \int_{Q} v_\e\, dx= \displaystyle \int_{Q} u\, dx\,.
\end{cases}
$$ 
Denoting by $G_\e$ the Green's function associated with the elliptic operator
$$
-\mathrm{div}\, (A_\e Du)
$$
it is known, \cite[equation (3.66)]{DK} and \cite[equation (1.6)]{GW}, that for all $q\in [1,2)$ and for all $x\in Q$ we have
$$
\|D_yG_\e(x, \cdot)\|_{L^q(Q)}\leq C\,,
$$
with $C$ depending only on the ellipticity constants and $q$ and not on $\e$. Since 
%%%%%%%%%%%change%%%%%%%%
{
\begin{align*}
v_{\epsilon}(x)&=\int_{Q} G_{\epsilon}
(x,y)\bigl[-\Div(u_\e\Div A_\e-b_\e)+c_\epsilon\bigr]\,dy\\
&=\int_{Q}\bigl[\bigl(u_\e\Div A_\e-b_\e\bigr)\cdot D_yG_\e(x,y)+
 G_{\epsilon}
(x,y)c_\epsilon\bigr]\,dy\,,
\end{align*}
}
%%%%%%%%%%%%%%%%%%
 it follows by standard properties of convolution that for all $q>1$ there exists $C>0$ depending only on $q$ and the $L^1$-norms  of $u_\e\Div A_\e$, $b_\e$, $c_\e$, hence
on the $L^1$-norms  of $b$, $c$, the $L^{\frac{p}{p-1}}$ norm of $u$, and the $W^{1,p}$ norm of $A$, such that $\|v_\e\|_{L^q(Q)}\leq C$ for $\e$ sufficiently small. Thus, we may assume (up to subsequences) that $v_\e\wto v$ weakly in  $L^q(Q)$, where $v$ solves 
\beq\label{weyl-fusco1}
  \int_{Q}vAD^2\vphi\, dx+ \int_{Q}\bigl(v\,\Div A-u\,\Div A+b\bigr)\cdot D\vphi\, dx+\int_Qc\vphi\, dx=0 
\eeq
  for all  $\vphi\in C^2_\#(Q)$, with $\int_Q\vphi\, dx=0$, and satisfies
\beq\label{weyl-fusco2}  
 \int_{Q} v\, dx= \displaystyle \int_{Q} u\, dx\,.
\eeq
Since by assumption $u$  solves the problem \eqref{weyl-fusco1}-\eqref{weyl-fusco2}, it is enough to show that the problem admits a unique solution. Let $v_1$ and $v_2$ be two solutions and set $w:=v_2-v_1$. Then, we have
\beq\label{weyl-fusco3}  
 \int_{Q}wAD^2\vphi\, dx+ \int_{Q} w\,\Div A\cdot D\vphi\, dx=0
\eeq
 for all  $\vphi\in C^2_\#(Q)$, with $\int_Q\vphi\, dx=0$.
 Let $g\in C^1_\#(Q)$, with $\int_Qg\, dx=0$ and denote by $\vphi_g$ the unique solution
 to the equation $\Div(A[D\vphi_g])=g$ such that  $\int_Q\vphi_g\, dx=0$. Hence, from \eqref{weyl-fusco3} we deduce that $\int_{Q}wg\, dx=0$ for all $g\in C^1_\#(Q)$, with $\int_Qg\, dx=0$.
 This implies that $w$ is constant and, in turn, $w\equiv 0$ since  $\int_Qw\, dx=0$.
\end{proof}
In the next lemma we denote by $Lu$ an elliptic operator of the form
\beq\label{mor1}
Lu:=\sum_{ij}a_{ij}(x)D_{ij}u+\sum_{i}b_i(x)D_iu\,,
\eeq
where all the coefficients are $Q$-periodic functions, the $a_{ij}$'s are continuous, and the $b_i$ are bounded. Moreover, there exist $\lambda$, $\Lambda>0$ such that
\beq\label{mor2}
\Lambda |\xi|^2\geq \sum_{ij}a_{ij}(x)\xi_i\xi_j\geq \lambda |\xi|^2\quad\text{for all $\xi\in \R^2$,} \qquad \sum_i|b_i|\leq \Lambda\,.
\eeq
\begin{lemma}\label{lm:morini}
Let $p\geq 2$. Then, there exists $C>0$ such that  for all $u\in W^{2,p}_\#(Q)$ we have
$$
\|D^2u\|_{L^p(Q)}\leq C\|L u\|_{L^p(Q)}\,,
$$
where $L$ is the differential operator defined in \eqref{mor1}. The constant $C$ depends only on $p$, $\lambda$, $\Lambda$ and the moduli of continuity of the coefficients $a_{ij}$.
\end{lemma}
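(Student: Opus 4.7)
The plan is to combine the classical Calderón--Zygmund $L^p$ estimate for elliptic operators with continuous leading coefficients with a compactness/contradiction argument that exploits the periodic setting to remove the zero-order norm on the right-hand side.

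First I would establish the standard a priori estimate
\begin{equation*}
\|D^2 u\|_{L^p(Q)}\leq C\bigl(\|Lu\|_{L^p(Q)}+\|u\|_{L^p(Q)}\bigr)
\end{equation*}
for all $u\in W^{2,p}_\#(Q)$, with $C$ depending on $p$, $\lambda$, $\Lambda$ and the modulus of continuity of the $a_{ij}$'s. This is done by first freezing the $a_{ij}$ at a point and applying the Calderón--Zygmund inequality for the constant-coefficient principal part (essentially reducing to the estimate for the Laplacian via a linear change of variables) on small balls; the uniform continuity of the $a_{ij}$ makes the perturbation error small, which one absorbs after a partition-of-unity/covering argument exploiting the periodicity. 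The lower-order terms $b_iD_iu$ are absorbed by standard interpolation between $\|D^2u\|_{L^p}$ and $\|u\|_{L^p}$. This is classical and can be quoted from e.g.\ Gilbarg--Trudinger; on the torus one works on the fundamental cell with periodic boundary conditions, and no boundary contribution arises.

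To upgrade this to the desired inequality (without the $\|u\|_{L^p}$ term), I would argue by contradiction. Suppose there exists a sequence $\{u_n\}\subset W^{2,p}_\#(Q)$ with $\|D^2u_n\|_{L^p(Q)}=1$ and $\|Lu_n\|_{L^p(Q)}\to 0$. Since the operator $L$ annihilates constants, we may subtract the mean and assume $\int_Q u_n\,dx=0$. By Poincaré's inequality on the torus, applied first to $u_n$ itself and then to $Du_n$ (which has zero mean by periodicity), we obtain $\|u_n\|_{W^{1,p}(Q)}\leq C\|D^2u_n\|_{L^p(Q)}=C$. Combined with the a priori estimate above, $\{u_n\}$ is bounded in $W^{2,p}_\#(Q)$. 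By the Rellich--Kondrachov theorem, up to a subsequence $u_n\to u$ strongly in $W^{1,p}(Q)$ and weakly in $W^{2,p}(Q)$; the continuity of the $a_{ij}$'s together with the $L^\infty$-boundedness of the $b_i$'s yields $Lu_n\wto Lu$ in $L^p(Q)$, so that $Lu=0$ and $\int_Q u\,dx=0$. Since $L$ is a periodic elliptic operator without zero-order term, the strong maximum principle (applied on the torus, where any continuous function attains its interior extrema) forces $u$ to be constant, whence $u\equiv 0$. Then $u_n\to 0$ in $L^p(Q)$, and plugging this into the a priori estimate gives $\|D^2u_n\|_{L^p(Q)}\to 0$, contradicting $\|D^2u_n\|_{L^p(Q)}=1$.

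The main technical hurdle is really the classical Calderón--Zygmund estimate with merely continuous leading coefficients; the contradiction argument that eliminates the lower-order norm is, by comparison, routine once one observes that the kernel of $L$ among periodic mean-zero functions is trivial.
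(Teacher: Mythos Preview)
Your approach is essentially the one the paper takes: first the a priori estimate with a lower-order term on the right (the paper simply quotes \cite[Theorem 9.11]{GT} rather than sketching the freezing-of-coefficients argument), then a contradiction argument using periodicity, Poincar\'e, compactness, and the maximum principle to eliminate the $\|u\|_{L^p}$ term.

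There is, however, one point you should tighten. The lemma asserts that $C$ depends only on $p$, $\lambda$, $\Lambda$ and the modulus of continuity $\omega$ of the leading coefficients; in particular, $C$ must be \emph{uniform} over all operators $L$ in this class. Your contradiction argument fixes a single $L$ and takes a sequence $\{u_n\}$, which only shows that for each such $L$ there is some constant $C_L$, not that a single $C$ works for the whole class. To obtain the stated uniformity you must negate correctly: assume there are sequences $\{L_n\}$ of operators (with coefficients $a^n_{ij}$, $b^n_i$ all satisfying \eqref{mor2} and sharing the modulus $\omega$) and $\{u_n\}\subset W^{2,p}_\#(Q)$ with $\|D^2u_n\|_{L^p}> n\|L_nu_n\|_{L^p}$. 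The paper does exactly this: by Arzel\`a--Ascoli the $a^n_{ij}$ converge uniformly (up to a subsequence) to some $a_{ij}$ still satisfying \eqref{mor2}, the $b^n_i$ converge weakly-$*$ in $L^\infty$, and one passes to the limit to obtain a periodic $u$ with $Lu=0$ and zero mean for the limiting operator. From there your argument applies verbatim.
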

\begin{proof}
We argue by contradiction assuming that  there exists a sequence  $\{u_h\}\subset W^{2,p}_\#(Q)$, a modulus of continuity $\omega$, and a sequence of operators $\{L_h\}$ as in \eqref{mor1}, with periodic coefficients $a^h_{ij}$, $b_i^h$ satisfying \eqref{mor2} and  
$$
|a^h_{ij}(x_1)-a^h_{ij}(x_2)|\leq \omega(|x_1-x_2|)
$$ 
for all $x_1$, $x_2\in Q$, such that
$$
\|D^2u_h\|_{L^p(Q)}\geq  h\|L_h u_h\|_{L^p(Q)}\,.
$$
By homogeneity we may assume that 
\beq\label{=1}
\|D^2u_h\|_{L^p(Q)}=1 \qquad \text{for all $h\in \N$.}
\eeq
Recall that by periodicity
$$
\int_Q Du_h\, dx=0\,.
$$
Moreover, by adding a constant if needed, we may also assume that $\int_Qu_h\, dx=0$.
Therefore, by Poincar\'e inequality and up to a subsequence, $u_h\wto u$ weakly in $W^{2,p}_\#(Q)$. Moreover, we may also assume that there exist $a_{ij}$ and $b_i$ satisfying \eqref{mor2}, such that
$$
a^h_{ij}\to a_{ij}\quad\text{uniformly in $Q$}\qquad\text{and}\qquad b_i^h\stackrel{*}{\wto}b_i
\quad\text{weakly* in $L^{\infty}(Q)$.}
$$
Since $\|L_h u_h\|_{L^p(Q)}\to 0$, we have that $u$ is a periodic function satisfying $Lu=0$, where $L$ is the operator associated with the coefficients $a_{ij}$ and $b_i$. Thus, by the Maximum Principle (\cite[Theorem 9.6]{GT}) $u$ is constant, and thus $u=0$. On the other hand, by elliptic regularity (see \cite[Theorem 9.11]{GT}) there exists a constant $C>0$ depending on $p$, $\lambda$, $\Lambda$, and $\omega$ such that
$$
\|D^2u_h\|_{L^p(Q)}\leq C( \|u_h\|_{W^{1,p}(Q)}+ \|L_h u_h\|_{L^p(Q)})\,.
$$
Since the right-hand side vanishes, we reach a contradiction to \eqref{=1}.
\end{proof}

\subsection{Interpolation results}
\begin{theorem}\label{th:A}
Let $\Om\subset\R^n$ be a bounded open set satisfying the cone condition. Let $1\leq p\leq\infty$ and $j$, $m$ be two integers such that $0\leq j\leq m$ and $m\geq 1$. Then there exists $C>0$ such that 
\begin{equation}\label{A}
\|D^jf\|_{L^p(\Om)}\leq C\bigl(\|D^{m}f\|_{L^p(\Om)}^{\frac{j}{m}}\|f\|_{L^p(\Om)}^{\frac{m-j}{m}}+\|f\|_{L^p(\Om)}\bigr)
\end{equation}
for all $f\in W^{m,p}(\Om)$. Moreover, if $\Om$ is a cube,   $f\in W^{m,p}_{\#}(\Om)$, and if either $f$ vanishes at the boundary or
$\int_\Om f\, dx =0$, 
then \eqref{A} holds in the stronger form
 \begin{equation}\label{Aper}
\|D^jf\|_{L^p(\Om)}\leq C\|D^{m}f\|_{L^p(\Om)}^{\frac{j}{m}}\|f\|_{L^p(\Om)}^{\frac{m-j}{m}}\,.
\end{equation}
\end{theorem}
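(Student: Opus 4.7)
The plan is to derive this (standard Gagliardo--Nirenberg type) interpolation inequality in three stages: (i) establish the scale-invariant estimate on $\R^n$; (ii) transfer it to $\Om$ via a Sobolev extension operator, whose existence is guaranteed by the cone condition; (iii) upgrade to \eqref{Aper} by absorbing the residual $\|f\|_p$ term through an iterated Poincar\'e inequality available under the periodic zero-mean or zero-trace hypotheses. Throughout, only the interior case $0<j<m$ needs work, since $j=0$ and $j=m$ are trivial.

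For (i), it suffices to prove the ``consecutive" estimate
\[
\|D^k f\|_p \le C\,\|D^{k-1}f\|_p^{1/2}\|D^{k+1}f\|_p^{1/2}, \qquad k\ge 1,\ f\in C^\infty_c(\R^n).
\]
Indeed, setting $a_k:=\log \|D^k f\|_p$ turns this into $a_k\le \tfrac12(a_{k-1}+a_{k+1})+\log C$; then $b_k:=a_k+(\log C)k^2$ is discrete convex, so $b_j\le (1-j/m)b_0+(j/m)b_m$, which unwinds to $\|D^j f\|_p\le C^{j(m-j)}\|f\|_p^{(m-j)/m}\|D^m f\|_p^{j/m}$. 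For the consecutive estimate itself I reduce to one variable: for $g\in C^\infty_c(\R)$ and $h>0$, Taylor's formula gives
\[
|g'(s)|\le h^{-1}\bigl(|g(s)|+|g(s+h)|\bigr)+\int_0^h|g''(s+u)|\,du.
\]
Taking $L^p$-norms in $s$, using translation invariance of the Lebesgue measure together with Minkowski's integral inequality, yields $\|g'\|_p\le 2h^{-1}\|g\|_p+h\|g''\|_p$; minimizing in $h$ gives $\|g'\|_p\le 2\sqrt{2}\,\|g\|_p^{1/2}\|g''\|_p^{1/2}$. Applying this slice-by-slice along coordinate axes and using Cauchy--Schwarz in the transverse variables produces $\|\partial_i f\|_p\le C\|\partial_{ii}f\|_p^{1/2}\|f\|_p^{1/2}$ on $\R^n$; summing over $i$ and applying this to the components of $D^{k-1}f$ gives the consecutive inequality for every $k\ge 1$.

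For (ii), the cone condition yields (by Stein's extension theorem) a linear operator $E\colon W^{m,p}(\Om)\to W^{m,p}(\R^n)$ with $\|Ef\|_{W^{k,p}(\R^n)}\le C_E\|f\|_{W^{k,p}(\Om)}$ for $0\le k\le m$. Applying the $\R^n$ estimate from (i) to $Ef$, restricting to $\Om$, and using the elementary inequality $(a+b)^{j/m}\le a^{j/m}+b^{j/m}$ to separate the two contributions in $\|Ef\|_{W^{m,p}(\R^n)}\le C_E(\|D^m f\|_{L^p(\Om)}+\|f\|_{L^p(\Om)})$ proves \eqref{A}. For (iii), the periodic zero-mean or zero-trace hypothesis permits $m$ iterations of Poincar\'e's inequality to give $\|f\|_p\le C\|D^m f\|_p$, and then
\[
\|f\|_p=\|f\|_p^{(m-j)/m}\|f\|_p^{j/m}\le C^{j/m}\|f\|_p^{(m-j)/m}\|D^m f\|_p^{j/m},
\]
which absorbs the additive term in \eqref{A} and yields \eqref{Aper}. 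The only non-trivial technical input is the existence of a bounded $m$-th order extension operator for domains with the cone condition; every other ingredient (Taylor's formula, Minkowski's integral inequality, log-convexity, Poincar\'e) is elementary.
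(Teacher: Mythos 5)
Your argument is a genuinely different route from the paper's: the paper simply invokes inequalities (1) and (3) of \cite[Theorem 5.2]{AdamsFournier} for \eqref{A} and then applies Poincar\'e for \eqref{Aper}, whereas you give a self-contained derivation. Most of it is sound: the one-dimensional Taylor estimate $\|g'\|_p\le 2h^{-1}\|g\|_p+h\|g''\|_p$, the slicing, the passage from the consecutive inequality to the full multiplicative one via discrete log-convexity (the correction $b_k=a_k+(\log C)k^2$ is exactly right), and the absorption in step (iii) — where you correctly use periodicity to give the derivatives zero mean, so that Poincar\'e can be iterated $m$ times even in the zero-trace case — all check out. (Two small technical points you should at least acknowledge: for $p=\infty$ one cannot pass from $C^\infty_c$ to $W^{m,\infty}$ by density, though the Taylor argument applies directly to $W^{2,\infty}_{\rm loc}$ functions; and the $\R^n$ estimate must be applied to $Ef$, which is not compactly supported.)

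The genuine gap is in step (ii). The theorem is stated for bounded open sets satisfying the \emph{cone condition}, and this hypothesis does not yield a bounded extension operator $E\colon W^{m,p}(\Om)\to W^{m,p}(\R^n)$. Stein's theorem requires a Lipschitz (minimally smooth) boundary, and Calder\'on's requires the \emph{uniform} cone condition, both strictly stronger than the plain cone condition. A standard counterexample is the slit disk $D\setminus([0,1)\times\{0\})$: every point is the vertex of a congruent cone contained in the domain, yet a function jumping across the slit cannot be extended to $W^{1,p}(\R^2)$ with norm control, so no such $E$ exists. Consequently your proof establishes \eqref{A} only for extension domains, not in the stated generality. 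The fix — and this is how \cite[Theorem 5.2]{AdamsFournier} actually proceeds — is to run your one-dimensional Taylor argument \emph{inside} $\Om$ along the directions of the cone guaranteed by the cone condition, rather than extending to $\R^n$ first; your stage (i) is then essentially reusable, but localized. For the purposes of this paper the gap is harmless, since Theorem~\ref{th:A} is only ever applied with $\Om$ a cube, but as a proof of the theorem as stated it is incomplete.
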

\begin{proof}
Inequality \eqref{A} follows by combining inequalities (1) and (3) in \cite[Theorem 5.2]{AdamsFournier}. If $\Omega$ is a cube, $f$ is periodic and if either $f$ vanishes at the boundary or
$\int_\Om f\, dx =0$, then inequality \eqref{Aper} follows by observing that 
$$
\|f\|_{W^{m,p}(\Om)}\leq C\|D^mf\|_{L^p(\Om)}\,,
$$
as a straightforward application of the Poincar\'e inequality.
\end{proof}

The next interpolation result  is obtained by combining  \cite[Theorem 5.8]{AdamsFournier} with \eqref{A}.

\begin{theorem}\label{th:C}
Let $\Om\subset \R^n$ be a bounded open set satisfying the cone condition. If $mp>n$,  let $1\leq p\leq q\leq \infty$; if $mp=n$ let $1\leq p\leq q<\infty$; if $mp< n$ let $1\leq p\leq q\leq np/(n-mp)$.  Then there exists $C>0$ such that 
\begin{equation}\label{C}
\|f\|_{L^q(\Om)}\leq C\bigl(\|D^mf\|_{L^{p}(\Om)}^{\theta}\|f\|_{L^p(\Om)}^{1-\theta}+\|f\|_{L^p(\Om)}\bigr)
\end{equation}
for all $f\in W^{m,p}(\Om)$, where $\theta:=\frac{n}{mp}-\frac{n}{mq}$. Moreover, if  $\Om$ is a cube,
$f\in W^{m,p}_{\#}(\Om)$, and if
either $f$ vanishes at the boundary or $\int_{\Om} f\, dx=0$, then \eqref{C} holds in the stronger form
\begin{equation}\label{Cper}
\|f\|_{L^q(\Om)}\leq C\|D^mf\|_{L^{p}(\Om)}^{\theta}\|f\|_{L^p(\Om)}^{1-\theta}\,.
\end{equation}
\end{theorem}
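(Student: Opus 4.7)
The plan is to follow the authors' own indication and combine the Sobolev embedding theorem from Adams--Fournier (Theorem~5.8 there) with the interpolation inequality \eqref{A} already established in Theorem~\ref{th:A}. The key conceptual point is that $\theta=\frac{n}{mp}-\frac{n}{mq}$ is precisely the scaling parameter that matches the $L^q$-endpoint in a Gagliardo--Nirenberg picture, so once one has any embedding of the form $W^{k,p}(\Om)\hookrightarrow L^q(\Om)$ with $k/m=\theta$, interpolation of $D^k f$ in $L^p$ between $f$ and $D^m f$ closes the argument.

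Concretely, I would first invoke the Adams--Fournier embedding, which under the stated hypotheses on $m,p,q,n$ yields
$$
\|f\|_{L^q(\Om)}\le C\bigl(\|D^k f\|_{L^p(\Om)}+\|f\|_{L^p(\Om)}\bigr)
$$
for an integer $k\in\{1,\dots,m\}$ with $k/m=\theta$. When $m\theta$ fails to be an integer, one instead applies the analogous embedding with the two consecutive integers $k=\lfloor m\theta\rfloor$ and $k+1$, giving two bounds in $L^{q_1}$ and $L^{q_2}$ whose scalings bracket $\theta$, and then interpolates $\|f\|_{L^q}$ between $L^{q_1}$ and $L^{q_2}$ via H\"older with the unique convex combination reproducing $\theta$.

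Next, I would use the already proved \eqref{A} to control the intermediate derivative norm:
$$
\|D^k f\|_{L^p(\Om)}\le C\bigl(\|D^m f\|_{L^p(\Om)}^{k/m}\|f\|_{L^p(\Om)}^{(m-k)/m}+\|f\|_{L^p(\Om)}\bigr).
$$
Substituting this in the previous display and recalling $k/m=\theta$ yields \eqref{C}. For the strengthened statement \eqref{Cper}, I would replace \eqref{A} by \eqref{Aper}, and I would use that for $f\in W^{m,p}_{\#}(\Om)$ with zero boundary trace or zero mean the iterated Poincar\'e inequality gives $\|f\|_{L^p(\Om)}\le C\|D^m f\|_{L^p(\Om)}$; splitting $\|f\|_{L^p(\Om)}=\|f\|_{L^p(\Om)}^{\theta}\|f\|_{L^p(\Om)}^{1-\theta}$ and using this bound on the first factor lets one absorb all additive $\|f\|_{L^p(\Om)}$ terms into the multiplicative expression $\|D^m f\|_{L^p}^\theta\|f\|_{L^p}^{1-\theta}$.

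The main obstacle is the case in which $m\theta$ is not an integer: bridging the genuinely fractional scaling $\theta$ from two adjacent integer-order embeddings and gluing them by H\"older requires care that the embedding constants stay uniform and that the conditions $1\le p\le \widetilde q_i$ are compatible with the ranges declared in the three regimes $mp>n$, $mp=n$, $mp<n$. Once this reduction to the integer case is done, the remaining algebra is an exercise in interpolation.
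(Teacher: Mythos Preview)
Your proposal is correct and follows exactly the route the paper indicates (the paper's entire proof is the sentence ``obtained by combining \cite[Theorem~5.8]{AdamsFournier} with \eqref{A}''). One simplification: the non-integer obstacle you flag disappears if you read Adams--Fournier's Theorem~5.8 as directly giving $\|f\|_{L^q}\le C\|f\|_{W^{m,p}}^{\theta}\|f\|_{L^p}^{1-\theta}$ for the stated $\theta$, so that \eqref{A} is only used to replace $\|f\|_{W^{m,p}}$ by $C(\|D^mf\|_{L^p}+\|f\|_{L^p})$, after which the subadditivity $(a+b)^\theta\le a^\theta+b^\theta$ for $\theta\in[0,1]$ yields \eqref{C} with no case distinction; your argument for \eqref{Cper} via Poincar\'e is then exactly right.
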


Combining Theorems~\ref{th:A} and \ref{th:C}, and arguing as in the proof of 
\cite[Theorem 6.4]{FFLM2}, we have the following theorem.
\begin{theorem}\label{th:D}
Let $\Om\subset \R^n$ be a bounded open set satisfying the cone condition.  Let $s$, $j$, and $m$ be integers such that $0\leq s\leq j\leq m$. Let $1\leq p\leq q<\infty$ if  $(m-j)p\geq n$, and let  $1\leq p\leq q\leq\infty$ if  $(m-j)p> n$ . Then, there exists $C>0$ such that
\begin{equation}\label{eq:D}
\|D^jf\|_{L^q(\Om)}\leq C\bigl(\|D^mf\|_{L^{p}(\Om)}^{\theta}\|D^sf\|_{L^p(\Om)}^{1-\theta}+\|D^sf\|_{L^p(\Om)}\bigr)
\end{equation}
for all $f\in W^{m,p}(\Om)$, where
$$
\theta:=\frac{1}{m-s}\left(\frac{n}p-\frac{n}q+j-s\right)\,.
$$
Moreover, if $\Om$ is a cube,  $f\in W^{m,p}_{\#}(\Om)$, and if 
either $f$ vanishes at the boundary or $\int_{\Om} f\, dx=0$, then \eqref{eq:D} holds in the stronger form
\begin{equation}\label{Dper}
\|D^jf\|_{L^q(\Om)}\leq C\|D^mf\|_{L^{p}(\Om)}^{\theta}\|D^sf\|_{L^p(\Om)}^{1-\theta}\,.
\end{equation}
\end{theorem}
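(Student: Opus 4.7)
\textbf{Proof plan for Theorem~\ref{th:D}.} The strategy is to chain together the pure interpolation inequality in $L^p$ from Theorem~\ref{th:A} with the Sobolev-type interpolation from Theorem~\ref{th:C}, applied to suitable derivatives of $f$, in the spirit of \cite[Theorem~6.4]{FFLM2}.

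First, I would apply Theorem~\ref{th:C} to $g:=D^j f$ with the integer $m-j$ in place of $m$ (the trichotomy for $(m-j)p$ versus $n$ and the range of $q$ is exactly the hypothesis imposed in the statement of Theorem~\ref{th:D}), to get
$$
\|D^j f\|_{L^q(\Om)} \leq C\bigl(\|D^m f\|_{L^p(\Om)}^{\sigma}\|D^j f\|_{L^p(\Om)}^{1-\sigma}+\|D^j f\|_{L^p(\Om)}\bigr),
$$
with $\sigma:=\tfrac{n/p-n/q}{m-j}$. Next, applying Theorem~\ref{th:A} to $D^s f\in W^{m-s,p}(\Om)$, with $j-s$ in place of $j$ and $m-s$ in place of $m$, yields
$$
\|D^j f\|_{L^p(\Om)} \leq C\bigl(\|D^m f\|_{L^p(\Om)}^{a}\|D^s f\|_{L^p(\Om)}^{1-a}+\|D^s f\|_{L^p(\Om)}\bigr),
$$
with $a:=(j-s)/(m-s)$.

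Combining these two estimates via the elementary inequality $(A+B)^{1-\sigma}\leq A^{1-\sigma}+B^{1-\sigma}$ produces a principal term with exponents $\sigma+a(1-\sigma)$ on $\|D^m f\|_{L^p}$ and $(1-a)(1-\sigma)$ on $\|D^s f\|_{L^p}$; a direct algebraic computation shows that these exponents equal $\theta$ and $1-\theta$, respectively. The remaining cross-terms, of the form $\|D^m f\|_{L^p}^{\sigma}\|D^s f\|_{L^p}^{1-\sigma}$ (with $\sigma\leq\theta$) and $\|D^s f\|_{L^p}$, are absorbed using Young's inequality: writing $(\sigma,1-\sigma)$ as the convex combination $\tfrac{\sigma}{\theta}(\theta,1-\theta)+(1-\tfrac{\sigma}{\theta})(0,1)$ gives
$$
\|D^m f\|_{L^p}^{\sigma}\|D^s f\|_{L^p}^{1-\sigma}\leq \|D^m f\|_{L^p}^{\theta}\|D^s f\|_{L^p}^{1-\theta}+\|D^s f\|_{L^p},
$$
which together with the previous two displays yields \eqref{eq:D}.

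For the periodic case \eqref{Dper} the same chain applies verbatim, but using the sharper forms \eqref{Aper} and \eqref{Cper} in place of \eqref{A} and \eqref{C}. The key observation is that if $f\in W^{m,p}_\#(\Om)$ either vanishes on $\partial\Om$ or has zero mean, then each derivative $D^k f$ with $k\geq 1$ automatically satisfies $\int_\Om D^k f\,dx=0$ by periodicity (and vanishes on $\partial\Om$ in the Dirichlet case), so the stronger forms of Theorems~\ref{th:A} and \ref{th:C} are applicable to $D^j f$ and to $D^s f$ at the relevant steps; all additive residuals disappear and one is left with \eqref{Dper}. The only technical point to watch is the verification of the admissibility hypotheses $(m-j)p\gtreqless n$ when invoking Theorem~\ref{th:C} in the first step, and the correct identification of the convex-combination exponents when applying Young's inequality in the final absorption; once those are in place the calculation of the Hölder exponents is routine algebra, so I do not expect any serious obstacle.
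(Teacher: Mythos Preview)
Your proposal is correct and follows exactly the route the paper indicates: apply Theorem~\ref{th:C} to $D^jf$ with $m-j$ in place of $m$, then Theorem~\ref{th:A} to $D^sf$ with $(j-s,m-s)$ in place of $(j,m)$, and combine via Young's inequality; the exponent algebra $\sigma+a(1-\sigma)=\theta$ checks out. One small correction for the periodic case: it is not true that $D^kf$ vanishes on $\partial\Om$ when $f$ does, but this is irrelevant since, as you note, periodicity already forces $\int_\Om D^kf\,dx=0$ for $k\geq1$, which is all that is needed to invoke \eqref{Aper} and \eqref{Cper}.
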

Finally, we conclude with an interpolation estimate involving the $H^{-1}$-norm, see Remark~\ref{rm:normah-1}.
\begin{lemma}\label{lm:H-1}
There exists $C>0$ such that for all  $f\in H^1_\#(Q)$, with $\int_Q f\, dx=0$, we have
$$
\|f\|_{L^2(Q)}\leq C\|Df\|_{L^2(Q)}^{\frac12}\|f\|^{\frac12}_{H^{-1}_\#(Q)}\,.
$$
Similarly, there exists $C>0$ such that for all $f\in H^2_\#(Q)$, with $\int_Q f\, dx=0$, we have
$$
\|f\|_{L^2(Q)}\leq C\|D^2f\|_{L^2(Q)}^{\frac13}\|f\|^{\frac23}_{H^{-1}_\#(Q)}\,.
$$
\end{lemma}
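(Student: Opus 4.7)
The plan is to derive both inequalities by a short duality argument based on the characterisation
$\|f\|_{H^{-1}_\#(Q)}=\|Dw\|_{L^2(Q)}$ recalled in Remark~\ref{rm:normah-1}, where $w\in H^1_\#(Q)$ solves
$\Delta w = f$ in $Q$ with $\int_Q w\,dx=0$. The second estimate is obtained from the first by an intermediate
interpolation for $\|Df\|_{L^2}$, using that the zero-mean condition is preserved under $\partial_i$ in the periodic setting.

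For the first inequality, given $f\in H^1_\#(Q)$ with $\int_Q f\,dx=0$, let $w$ be the associated potential as above. Integration by parts (using periodicity) yields
\begin{equation*}
\|f\|_{L^2(Q)}^2=\int_Q f\,\Delta w\,dx=-\int_Q Df\cdot Dw\,dx\leq \|Df\|_{L^2(Q)}\|Dw\|_{L^2(Q)}=\|Df\|_{L^2(Q)}\|f\|_{H^{-1}_\#(Q)},
\end{equation*}
which gives the first claim after taking square roots.

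For the second inequality, I will first establish the auxiliary bound
\begin{equation*}
\|Df\|_{L^2(Q)}\leq C\,\|D^2 f\|_{L^2(Q)}^{1/2}\|f\|_{L^2(Q)}^{1/2}\qquad \text{for all $f\in H^2_\#(Q)$ with $\int_Q f\,dx=0$.}
\end{equation*}
By periodicity, each $\partial_i f$ has zero mean, so the first inequality applies componentwise:
$\|\partial_i f\|_{L^2}\leq C\|D(\partial_i f)\|_{L^2}^{1/2}\|\partial_i f\|_{H^{-1}_\#(Q)}^{1/2}$. The $H^{-1}$-norm
of $\partial_i f$ is controlled by $\|f\|_{L^2}$ via the duality $\int_Q\partial_i f\,g\,dx=-\int_Q f\,\partial_i g\,dx$
and the norm in Remark~\ref{rm:normah-1}, giving $\|\partial_i f\|_{H^{-1}_\#(Q)}\leq \|f\|_{L^2(Q)}$.
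Summing over $i$ and using $\|D(\partial_i f)\|_{L^2}\leq\|D^2 f\|_{L^2}$ proves the auxiliary bound.

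Combining this auxiliary bound with the first inequality of the lemma gives
\begin{equation*}
\|f\|_{L^2(Q)}\leq C\,\|Df\|_{L^2(Q)}^{1/2}\|f\|_{H^{-1}_\#(Q)}^{1/2}\leq C\,\|D^2 f\|_{L^2(Q)}^{1/4}\|f\|_{L^2(Q)}^{1/4}\|f\|_{H^{-1}_\#(Q)}^{1/2},
\end{equation*}
so that $\|f\|_{L^2(Q)}^{3/4}\leq C\|D^2 f\|_{L^2(Q)}^{1/4}\|f\|_{H^{-1}_\#(Q)}^{1/2}$, which yields the desired inequality after raising both sides to the power $4/3$. There is no real obstacle here; the only point demanding some care is the preservation of the zero-mean property under differentiation (which makes the $H^{-1}$ bound on $\partial_i f$ meaningful) and the proper use of the equivalent norm of Remark~\ref{rm:normah-1}.
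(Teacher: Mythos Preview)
Your proof is correct and takes a genuinely different, more elementary route than the paper. The paper argues on the potential $w$: writing $\|f\|_{L^2}=\|\Delta w\|_{L^2}\leq C\|D^2w\|_{L^2}$, it invokes the Gagliardo--Nirenberg inequality \eqref{Aper} to get $\|D^2w\|_{L^2}\leq C\|D^3w\|_{L^2}^{1/2}\|Dw\|_{L^2}^{1/2}$, and then elliptic regularity (Lemma~\ref{lm:morini}) to bound $\|D^3w\|_{L^2}$ by $\|\Delta(Dw)\|_{L^2}=\|Df\|_{L^2}$. Your argument instead bypasses both external lemmas: the single integration by parts $\|f\|_{L^2}^2=\int_Q f\,\Delta w=-\int_Q Df\cdot Dw$ gives the first inequality directly (with constant $1$), and the second follows by bootstrapping through the auxiliary bound $\|Df\|_{L^2}\leq C\|D^2f\|_{L^2}^{1/2}\|f\|_{L^2}^{1/2}$, itself obtained by applying the first inequality to each $\partial_i f$ together with the duality estimate $\|\partial_i f\|_{H^{-1}_\#}\leq\|f\|_{L^2}$. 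Your approach is more self-contained and avoids the machinery of Lemma~\ref{lm:morini} and Theorem~\ref{th:A}; the paper's approach, by contrast, exhibits both inequalities as instances of a single interpolation pattern on $w$, which generalises more readily to other orders.
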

\begin{proof}
Let $w$ be the unique $Q$-periodic solution to 
$$
\begin{cases}
-\Delta w=f & \text{in }Q\,,\\
\int_Qw\, dx=0\,.
\end{cases}
$$
Combining Lemma~\ref{lm:morini} with \eqref{Aper} we obtain
\begin{align*}
\|f\|_{L^2(Q)}&=\|\Delta w\|_{L^2(Q)}\leq C\|D^2 w\|_{L^2(Q)}\leq C\|D^3w\|^{\frac12}_{L^2(Q)}\|Dw\|^{\frac12}_{L^2(Q)}\\
&\leq
C\|\Delta(Dw)\|^{\frac12}_{L^2(Q)}\|Dw\|^{\frac12}_{L^2(Q)}=C\|Df\|^{\frac12}_{L^2(Q)}\|f\|^{\frac12}_{H^{-1}_\#(Q)}\,.
\end{align*}
The second inequality of the statement is proven similarly.
\end{proof}

\section*{Acknowledgements}
The authors warmly thank the Center for Nonlinear Analysis (NSF Grants No.
DMS-0405343 and DMS-0635983), where part of this research was carried out. The
research of I.~Fonseca was partially funded by the National Science Foundation
under Grant No. DMS-0905778 and that of G. Leoni under Grant No. DMS-1007989.
I. Fonseca and G. Leoni also acknowledge support of the National Science
Foundation under the PIRE Grant No. OISE-0967140. 
The work of N. Fusco was supported by ERC under FP7 Advanced Grant n. 226234 and was partially carried on at the University of
Jyv\"askyl\"a under the FiDiPro Program.

\end{document}